\pdfoutput=1
\documentclass[toc]{mathprint}
\usepackage{macros}

\title[Multifractal Gatzouras--Lalley measures]{Multifractal analysis of Gatzouras--Lalley measures}

\author[Jordan]
  {Thomas Jordan}
  {
      School of Mathematics,
      Fry Building,
      Woodland Road,
      Bristol,
      BS8 1UG,
      United Kingdom
  }
  {thomas.jordan@bristol.ac.uk}

\author[Kolossváry]
  {István Kolossváry}
  {
      HUN-REN Alfréd Rényi Institute of Mathematics,
      1053 Budapest,
      Reáltanoda u.\ 13--15,
      Hungary
  }
  {istvanko@renyi.hu}

\author[Rutar]
  {Alex Rutar}
  {
      Department of Mathematics,
      The University of British Columbia,
      1984 Mathematics Road,
      Vancouver BC V6T 1Z2,
      Canada
  }
  {alex@rutar.org}

\begin{document}
\begin{abstract}
    We study the multifractal analysis of self-affine measures supported on planar carpets of type Gatzouras--Lalley.
    We work primarily with Olsen's Hausdorff-dimensional variant of the $L^q$ spectrum, which we denote by $\vartheta(q)$.
    We prove a variational formula for $\vartheta$, from which it follows that multifractal formalism holds at all values $q$ for which $\vartheta(q)$ is differentiable.

    However, in general, $\vartheta$ need not be differentiable and the multifractal formalism may fail.
    In fact, the multifractal spectrum may be non-differentiable and have jump discontinuities in the interior of its support, and there may be multiple phase transitions at both negative and positive values of $q$.
\end{abstract}

\section{Introduction}
A finite family of maps $\{f_i\}_{i\in\mathcal{I}}$ is a \emph{self-affine iterated function system (IFS)} if $f_i(x) = A_i x + t_i$ where $A_i$ is a strictly contracting linear map from $\R^d$ to $\R^d$, and $t_i\in\R^d$ is some translation.
(In this paper, we will take $d=2$.)
Given a probability measure $\bm{p} = (p_i)_{i\in\mathcal{I}}$ on $\mathcal{I}$, there exists a unique compactly supported Borel probability measure $\mu_{\bm{p}}$ satisfying the invariance relation
\begin{equation*}
    \mu_{\bm{p}}=\sum_{i\in\mathcal{I}}p_i \mu_{\bm{p}}\circ f_i^{-1}.
\end{equation*}
We call $\mu_{\bm{p}}$ a \emph{self-affine measure} and let $\Lambda = \supp\mu_{\bm{p}}$ denote the associated \emph{self-affine set}.
Without loss of generality, we will assume that $\bm{p}$ is strictly positive.

In this paper, we study the multifractal properties of self-affine measures supported on a family of planar self-affine carpets.
We refer to such measures as \emph{Gatzouras--Lalley measures}.
We defer a formal definition to \cref{ss:GL}, but in words the most notable features are that the matrices are diagonal with horizontal contractions $\ctr_{i,1}$ and vertical contractions $\ctr_{i,2}$ for $i\in\mathcal{I}$, the contractions satisfy $0 < \ctr_{i,2} < \ctr_{i,1} < 1$, and the maps are arranged into columns.
See \cref{f:carpets} for a representative example of the maps in such an IFS as well as the corresponding invariant set.

\subsection{Background on multifractal analysis}
Suppose $\mu$ is a Borel probability measure.
The \emph{local dimension} of $\mu$, denoted by $\dim_{\loc}(\mu, x)$, is given (when the limit exists) by
\begin{equation*}
    \dim_{\loc}(\mu,x)=\lim_{r\to 0}\frac{\log \mu\bigl(B(x,r)\bigr)}{\log r}.
\end{equation*}
For instance, for general self-affine measures, it is known that $\dim_{\loc}(\mu, x)$ exists and is constant for $\mu$-a.e.\ $x$; see \cite{zbl:1564.28002}.
However, the set of points for which the local dimension exists but takes a different value can still be large.
Therefore, a natural object of interest is the \emph{multifractal spectrum}:
\begin{equation*}
    f_\mu(\alpha)\coloneqq \dimH\{x\in\supp\mu:\dim_{\loc}(\mu,x)=\alpha\}.
\end{equation*}
Here and elsewhere, the Hausdorff dimension of the empty set is $-\infty$.
We emphasize here that we actually require the local dimension to exist, rather than take the lower local dimension.

Commonly associated with the multifractal spectrum of $\mu$ is the \emph{(coarse) $L^q$-spectrum} of $\mu$, defined for $q\in\R$ by
\begin{equation*}
    \tau_\mu(q) = \liminf_{r \to 0}\frac{\log \sup \sum_i \mu\bigl(B(x_i, r)\bigr)^q}{\log r}.
\end{equation*}
Here, the supremum is taken over all packings $\{B(x_i, r)\}_i$ with $x_i\in\supp\mu$.
The function $\tau_\mu$ is an increasing concave function.
A heuristic relationship between the functions $f_\mu$ and $\tau_\mu$, called the \emph{multifractal formalism}, was first proposed in the physics literature \cite{zbl:0538.58026}:
\begin{equation}\label{e:mf}
    f_\mu(\alpha) = \tau_\mu^*(\alpha)\coloneqq\inf_{q\in\R} \{q\alpha - \tau_\mu(q)\}.
\end{equation}
Here, $\tau_\mu^*$ is the \emph{concave conjugate} of $\tau_\mu$.
More generally, one hopes to understand the function $f_\mu$ by studying the function $\tau_\mu$ and using duality.

In general, however, the $L^q$-spectrum is not the most natural dual object to the multifractal spectrum.
For instance, it is easy to see that $\tau_{\mu}(0) = -\dimuB \supp \mu$, and for self-affine measures (in particular, for those which we study in this paper) it is often the case that $\dimH\supp\mu < \dimuB\supp \mu$ (for the earliest examples, see \cite{local:BedfordThesis,zbl:0539.28003}).
In this case, \cref{e:mf} cannot hold.

Instead, we consider a variant of the $L^q$-spectrum first introduced by Olsen \cite{zbl:0841.28012} which is in general a more plausible candidate as a dual to the multifractal spectrum.
We will denote this function by $\vartheta_\mu$, and refer to it as the \emph{fine $L^q$-spectrum}.
The precise definition is rather technical, so we defer it to \cref{ss:lq}.
In short, the definition of $\vartheta_\mu$ is to the definition of $\tau_\mu$ as the definition of Hausdorff dimension is to the definition of upper box dimension.
For example, $\vartheta_\mu(0) = -\dimH\supp\mu$.

\subsection{Multifractal analysis of self-affine measures}
\begin{figure}[t]
    \centering
    \begin{subcaptionblock}{.47\textwidth}
        \centering
        \begin{tikzpicture}[scale=5]
    \begin{scope}[font=\tiny]
        \node[below] at (0,0) {$0$};
        \node[below] at (1,0) {$1$};
        \node[left] at (0,0) {$0$};
        \node[left] at (0,1) {$1$};
    \end{scope}
    \draw (0,0) rectangle (1,1);

    \draw[dashed] (1/3,0) -- (1/3,1);
    \draw[dashed] (1/2,0) -- (1/2,1);

    \begin{scope}[thick]
        \draw[fill=gray!10] (0.0,0.0) rectangle (1/3,1/4);
        \draw[fill=gray!10] (0.0,1/2) rectangle (1/3,1/2+1/10);

        \draw[fill=gray!10] (1/2,1/4-1/7) rectangle (1,1/4+1/3-1/7);
        \draw[fill=gray!10] (1/2,3/4) rectangle (1,1);
    \end{scope}
\end{tikzpicture}
        \caption{Maps}
        \label{f:carpet-maps}
    \end{subcaptionblock}%
    \begin{subcaptionblock}{.47\textwidth}
        \centering
        \input{figures/carpet_attractor/fig}
        \caption{Attractor}\label{sf:bar}
        \label{f:carpet-attractor}
    \end{subcaptionblock}%
    \caption{Generating maps and attractor associated with a Gatzouras--Lalley IFS.}
    \label{f:carpets}
\end{figure}
In the special case that the maps $f_i$ are self-similar (that is, every $A_i$ is a scalar multiple of an orthogonal transformation) and moreover satisfy the open set condition (that is, there is an open set $U$ so that $f_i(U)\subset U$ for all $i$ and $f_i(U) \cap f_j(U) = \varnothing$ for all $i\neq j$), it is well-known that
\begin{equation*}
    \sum_{i\in\mathcal{I}}p_i^q r_i^{-\tau_{\mu_{\bm{p}}}(q)} = 1
\end{equation*}
where each $f_i$ has similarity ratio $r_i\in(0,1)$.
In particular, \emph{$\tau_{\mu_{\bm{p}}}$ is analytic and moreover the multifractal formalism holds}.
For exposition concerning this latter fact and its history, see \cite{arxiv:2312.08974,zbl:1543.28001,zbl:0869.28003,zbl:0873.28003,zbl:0763.58018,zbl:0664.58022}.
In the self-similar case with separation, it is also known that $\vartheta_\mu=\tau_\mu$ \cite{zbl:0841.28012}.
Understanding the overlapping self-similar case is an area of active research; see for instance \cite{zbl:1426.11079,zbl:1184.28009,zbl:1561.28096} and the many references therein.

In contrast, much less is known about multifractal analysis for general self-affine measures.
Firstly, we note that there are some results for Birkhoff averages \cite{zbl:1230.37034,zbl:1150.28004} (these results are quite different than for local dimensions studied here).
For self-affine IFSs either with typical translations or under certain assumptions about projections in Furstenberg directions and assumptions on the matrices, there are partial results; see \cite{zbl:1280.28010,zbmath:8179359,zbl:0946.28004,zbl:1379.28008}.
In the above results, the Hausdorff and box dimensions are the same so it is natural to work instead with the ``usual'' $L^q$-spectrum $\tau_\mu$.
In contrast, for self-affine carpets (such as Bedford--McMullen carpets, or the carpets considered here), the Hausdorff and box dimensions are typically distinct so it is natural to study the fine $L^q$-spectrum instead.
For Bedford--McMullen carpets, there is an explicit closed-form expression for $\vartheta_\mu$ and it is known that $\vartheta_\mu$ is analytic, and $f_\mu = \vartheta_\mu^*$ (that is, the \emph{fine multifractal formalism} holds).
This was proven by Olsen \cite{zbl:0955.28004} (building on earlier work of King \cite{zbl:0845.28007}); see also \cite{zbl:1206.28012,zbl:1206.82004}.
We are not aware of any other results concerning the fine $L^q$ spectrum of self-affine measures.

In this paper, we study the fine multifractal spectrum of measures supported on Gatzouras--Lalley carpets.
At first glance, Gatzouras--Lalley carpets seem rather similar to Bedford--McMullen carpets, with the only extra flexibility being that the vertical and horizontal contraction ratios can vary between maps (as long as the column-structure remains preserved).
However, to disabuse the reader of such a belief, we note two important results:
\begin{enumerate}
    \item Gatzouras--Lalley carpets can support multiple Bernoulli measures of maximal dimension \cite{zbl:1228.37025}.
    \item For the natural generalization of Gatzouras--Lalley carpets to $\R^3$, it can happen that there is \emph{no} ergodic measure of maximal dimension \cite{zbl:1387.37026}.
\end{enumerate}
This is in contrast to the case for the box dimension, where explicit formulas are known for the box dimension \cite{zbl:0757.28011} and for the $L^q$ spectrum \cite{zbl:1091.28005}, even for the natural higher-dimensional generalizations \cite{zbl:1549.37013}.
Also compare, for instance, the results concerning the Assouad spectrum of Gatzouras--Lalley carpets from \cite{zbl:1587.28018} with the simple formula for Bedford--McMullen carpets proven in \cite{zbl:1407.28002}.

As a consequence of our main results, we contribute a third result to this list:
\begin{enumerate}[resume]
    \item Gatzouras--Lalley carpets can support Bernoulli measures $\mu$ for which the fine multifractal formalism can fail: $\vartheta_\mu$ is non-differentiable and there is a number $\alpha>0$ such that $0<f_\mu(\alpha) < \vartheta_\mu^*(\alpha)$.
        In fact, the multifractal spectrum can be discontinuous in the interior of its support.
\end{enumerate}

We now proceed to state our main results precisely.
The notation and definitions required to state the main results can be found in \cref{ss:GL}, \cref{ss:ent}, and \cref{ss:lq}, where they are introduced in context along with other preliminary material.

\subsection{Main results}
Throughout this section and later, when $\mu=\mu_{\bm{p}}$ is a self-affine measure, we will use the shorthand
\begin{equation*}
    f_{\bm{p}}=f_{\mu_{\bm{p}}},\qquad \vartheta_{\bm{p}}=\vartheta_{\mu_{\bm{p}}},\qquad \tau_{\bm{p}} = \tau_{\mu_{\bm{p}}}.
\end{equation*}
Here, $f_{\bm{p}}$ denotes the multifractal spectrum, $\vartheta_{\bm{p}}$ denotes the fine $L^q$-spectrum, and $\tau_{\bm{p}}$ denotes the (usual) coarse $L^q$-spectrum.
Without loss of generality, we may assume that $\bm{p}$ is fully supported.
We also assume that the IFS has separation of principal projections (see \cref{d:spp}).

Our main contribution is to prove a variational formula for $\vartheta_{\bm{p}}(q)$ for all $q\in\R$.
In order to state the variational formula, we require a bit of notation.

For $\bm{w}, \bm{p}\in\mathcal{P}$ (where $\mathcal{P}$ denotes the probability simplex on $\mathcal{I}$), define
\begin{equation}\label{e:ly-formulas}
    \begin{aligned}
        v(\bm{w}) &= \frac{H(\eta(\bm{w}))}{\chi_1(\eta(\bm{w}))}+\frac{H(\bm{w})-H(\eta(\bm{w}))}{\chi_2(\bm{w})},\\
        u_{\bm{p}}(\bm{w}) &= \frac{H(\eta(\bm{w}), \eta(\bm{p}))}{\chi_1(\eta(\bm{w}))}+\frac{\CH{\bm{w}}{\bm{p}}-\CH{\eta(\bm{w})}{\eta(\bm{p})}}{\chi_2(\bm{w})}.
    \end{aligned}
\end{equation}
See \cref{ss:ent} for the definitions of entropy $H$ and Lyapunov exponents $\chi_k$ and \cref{ss:GL} for the precise definition of the projection $\eta$ from the space of maps to the space of columns.
The quantity $v(\bm{w})$ is just the Hausdorff dimension of $\mu_{\bm{w}}$, and the quantity $u_{\bm{p}}(\bm{w})$ is the $\mu_{\bm{w}}$-a.e.\ value of the local dimension of $\mu_{\bm{p}}$.

We also introduce a \emph{fibred $L^q$-spectrum}.
Let $\Psi\colon\eta(\mathcal{P})\times\R\times\R\to\R$ be given by
\begin{equation*}
    \Psi(\bm{v},q,s)=-\sum_{\ell\in\eta(\mathcal{I})}\bm{v}_\ell\log\Bigl(\sum_{j\in\eta^{-1}(\ell)}p_j^q\ctr_{j,2}^{-s}\Bigr).
\end{equation*}
Clearly, $\Psi$ is continuous and strictly decreasing in $s$.
Moreover,
\begin{equation*}
    \lim_{s\to -\infty}\Psi(\bm{v},q,s)=\infty
    \qquad\text{and}\qquad
    \lim_{s\to\infty}\Psi(\bm{v},q,s)=-\infty.
\end{equation*}
In particular, this implies for each $\bm{p}\in\mathcal{P}$ that there is a unique continuous function $T_{\bm{p}}\colon\eta(\mathcal{P})\times\R\to\R$ satisfying
\begin{equation*}
    \Psi\bigl(\bm{v},q,T_{\bm{p}}(\bm{v},q)\bigr)=q\CH{\bm{v}}{\eta(\bm{p})}.
\end{equation*}
We prove the following formula.
\begin{itheorem}\label{it:main}
    Let $\mu_{\bm{p}}$ be a Gatzouras--Lalley measure with separation of principal projections and let $q\in\R$.
    Then
    \begin{align*}
        \vartheta_{\bm{p}}(q)
        &=\min_{\bm{w}\in\mathcal{P}}\left\{q u_{\bm{p}}(\bm{w}) - v(\bm{w})\right\} \\
        &=\min_{\bm{v}\in\eta(\mathcal{P})}\Bigl\{\frac{q\CH{\bm{v}}{\eta(\bm{p})}-H(\bm{v})}{\chi_1(\bm{v})}+T_{\bm{p}}(\bm{v},q)\Bigr\}.
    \end{align*}
\end{itheorem}
Here are some relevant comments on this theorem.
\begin{enumerate}
    \item When $q = 0$, the first line is precisely the variational formula for the Hausdorff dimension of $\Lambda$, as established in \cite{zbl:0757.28011}.
        However, even in the case $q=0$, our proof is different.
    \item The second formula is new even in the case $q = 0$.
        Heuristically, quasi-concavity (see \cref{ss:qc}) allows us to reduce dimensionality of the optimization ``by one'': in $\R$, there is a closed form expression; and in $\R^2$, we still have a variational formula.
        However, starting in $\R^3$, there is no more variational formula using ergodic measures \cite{zbl:1387.37026}.
    \item In the special case that $\chi_1(\bm{w})/\chi_2(\bm{w})$ is constant, our framework also allows us to prove that $\vartheta_{\bm{p}}$ is differentiable.
        More generally, this argument extends to higher dimensions, under the assumption that the joint ratios of Lyapunov exponents are constant.
        This is a mild generalization of a result due to Olsen \cite{zbl:0955.28004}.
\end{enumerate}
Using our main result, we obtain consequences for the multifractal formalism.
We first recall the following well-known lower bound:
\begin{equation*}
    f_{\bm{p}}(\alpha) \geq \max_{\bm{w}\in\mathcal{P}}\left\{v(\bm{w}):u_{\bm{p}}(\bm{w}) = \alpha\right\}.
\end{equation*}
Since this lower bound is dual (in the sense explained in \cref{ss:optimisation}) to the variational formula for $\vartheta_{\bm{p}}$, we immediately obtain the fine multifractal formalism whenever $\vartheta_{\bm{p}}$ is differentiable.
\begin{icorollary}\label{ic:diff}
    Let $\mu_{\bm{p}}$ be a Gatzouras--Lalley measure with separation of principal projections.
    Then, $f_{\bm{p}}(\alpha) = \vartheta_{\bm{p}}^*(\alpha)$ for all values $\alpha=\vartheta_{\bm{p}}'(q)$ where $q\in\R$ and $\vartheta_{\bm{p}}'(q)$ exists.
\end{icorollary}
\begin{remark}
    The above result also holds at the endpoints, by defining $\vartheta_{\bm{p}}'(\pm\infty)$ to be the slopes of the asymptotes at $\pm\infty$.
    This follows by extending \cref{l:duality} to $\pm\infty$, using (in the notation there) continuity of $u$ and compactness of $\Delta$.
\end{remark}
Moreover, a direct computation (using \cref{ic:diff} and the formula for the $L^q$-spectrum from \cite{zbl:1091.28005}) allows us to characterize precisely when the usual multifractal formalism holds.
See \cref{ss:lq-equal} for more details.
Note that the final condition equivalently states that the $L^q$-spectra of the column IFSs are all equal.
\begin{icorollary}\label{ic:coarse-formalism}
    Let $\mu_{\bm{p}}$ be a Gatzouras--Lalley measure with separation of principal projections.
    Then the following are equivalent.
    \begin{enumerate}[nl,r]
        \item\label{i:cf-1} $f_{\bm{p}}(\alpha) = \tau_{\bm{p}}^*(\alpha)$ for all $\alpha\in\R$.
        \item\label{i:cf-2} $\tau_{\bm{p}}(q) = \vartheta_{\bm{p}}(q)$ for all $q\in\R$.
        \item\label{i:cf-3} The function $\bm{v}\mapsto T_{\bm{p}}(\bm{v}, q)$ is constant.
    \end{enumerate}
\end{icorollary}
In general, however, $\vartheta_{\bm{p}}$ need not be differentiable.
In such cases, we need a way to bound $f_{\bm{p}}$ without using the fine $L^q$-spectrum and duality.
Unfortunately, we cannot do this in general, so we must restrict our attention to Gatzouras--Lalley measures with two columns.
\begin{itheorem}\label{it:two-col-var}
    Let $\mu_{\bm{p}}$ be a Gatzouras--Lalley measure with separation of principal projections and with two columns.
    Then for all $\alpha\in\R$,
    \begin{align*}
        f_{\bm{p}}(\alpha)
        &=\max_{\bm{w}\in\mathcal{P}}\left\{v(\bm{w}):u_{\bm{p}}(\bm{w}) = \alpha\right\}\\
        &=\max_{\bm{v}\in\eta(\mathcal{P})}
        \left\{
            \frac{H(\bm{v})}{\chi_1(\bm{v})}
            +
            T^*_{\bm{p}}\left(\bm{v}, \alpha-\frac{\CH{\bm{v}}{\eta(\bm{p})}}{\chi_1(\bm{v})}\right)
        \right\}.
    \end{align*}
\end{itheorem}
In the above result, the sets in the maximum are non-empty for a compact interval of $\alpha$.
Similarly to the convention for the multifractal, the maximum over the emptyset is $-\infty$.

Finally we can give examples exhibiting exceptional behaviour, already in the case that the IFS has two columns and three maps.
\begin{itheorem}\label{it:two-col-exc}
    There exists a Gatzouras--Lalley measure on $\R^2$ with separation of principal projections such that $f_{\bm{p}}$ has a jump discontinuity in the interior of its support.
    In particular, the multifractal formalism fails: there is a number $\alpha>0$ such that $f_{\bm{p}}(\alpha) < \vartheta_{\bm{p}}^*(\alpha)$.
\end{itheorem}
\begin{remark}
    In our examples, we must necessarily exploit non-uniqueness of the minimizing probability vectors in the formula in \cref{it:main} (first observed, in the case $q=0$, in \cite{zbl:1228.37025}).
    It follows from general facts about optimization (see for instance \cref{l:duality}) that if the minimum in the variational formula for $\vartheta_{\bm{p}}(q)$ is attained at a unique value $\bm{w}\in\mathcal{P}$, then $\vartheta_{\bm{p}}'(q) = u_{\bm{p}}(\bm{w})$ and therefore the multifractal formalism holds at $q$.
    Of course, it could happen that there are multiple minimizers but $\vartheta_{\bm{p}}'$ still exists: this is the case precisely when $u_{\bm{p}}$ is constant across all of the minimizers.
\end{remark}

\section{Background on Gatzouras--Lalley measures}\label{s:not}
In this section, we introduce standard background and context for Gatzouras--Lalley measures.
\subsection{Gatzouras--Lalley measures and separation conditions}\label{ss:GL}
Fix an index set $\mathcal{I}$ with $\#\mathcal{I}\geq 2$, and for $j=1,2$ fix contraction ratios $(\ctr_{i,j})_{i\in\mathcal{I}}$ in $(0,1)$ and translations $(d_{i,j})_{i\in\mathcal{I}}\in\R^{\mathcal{I}}$.
We then call the IFS $\Phi=\{T_i\}_{i\in\mathcal{I}}$ \defn{diagonal} if
\begin{equation*}
    T_i(x_1,x_2)=(\ctr_{i,1} x_1+d_{i,1},\ctr_{i,2} x_2+d_{i,2})\text{ for each }i\in\mathcal{I}.
\end{equation*}
Without loss of generality, we may assume that $T_i((0,1)^2) \subset (0,1)^2$.
We let $\Lambda$ denote the unique non-empty compact invariant set.

Let $\eta$ denote the projection onto the $1$\textsuperscript{st} coordinate axis, i.e.\ $\eta(x_1,x_2)=x_1$.
We denote by $\eta\Phi=\{S_{\underline{i}}\}_{\underline{i}\in\eta(\mathcal{I})}$ the \defn{projected system}, where $\eta\circ T_i=S_{i}\circ\eta$.
Of course, $S_{i}(x)=\ctr_{i,1} x+d_{i,1}$ are similarity maps.
In this way, the map $\eta$ induces an equivalence relation on the index set $\mathcal{I}$ where $i\sim j$ if $S_{i}=S_{j}$.
One might think of elements $\underline{i}\in\eta(\mathcal{I})$ as referring to columns of the corresponding IFS.

Let $\mathcal{I}^*=\bigcup_{n=0}^\infty\mathcal{I}^n$, and for $\mtt{i}=(i_1,\ldots,i_n)\in\mathcal{I}^*$, write
\begin{align*}
    T_{\mtt{i}} &= T_{i_1}\circ\cdots\circ T_{i_n}\\
    S_{\mtt{i}} &= S_{i_1}\circ\cdots\circ S_{i_n}
\end{align*}
and
\begin{align*}
    p_{\mtt{i}} &= p_{i_1}\cdots p_{i_n}\\
    \ctr_{\mtt{i},j} &= \ctr_{i_1,j}\cdots \ctr_{i_n,j}
\end{align*}
For $n\in\N$ and $\gamma\in\mathcal{I}^{\N}$, we write $\gamma\npre{n}$ to denote the unique prefix of $\gamma$ in $\mathcal{I}^{n}$.

Throughout, we will denote the unique non-empty compact invariant set associated with the IFS $\Phi$ by $\Lambda$.
Then the projected IFS $\eta\Phi$ has invariant set $\eta(\Lambda)$.
\begin{definition}
    We say that the diagonal IFS $\{T_i\}_{i\in\mathcal{I}}$ is \emph{Gatzouras--Lalley} if the following additional assumptions hold:
    \begin{enumerate}[nl,r]
        \item $\ctr_{i,1} > \ctr_{i,2}$ for all $i\in\mathcal{I}$.
        \item $T_i((0,1)^2) \cap T_j((0,1)^2) = \varnothing$ for all $i \neq j$ in $\mathcal{I}$.
        \item $S_{\underline{i}}((0,1)) \cap S_{\underline{j}}((0,1)) = \varnothing$ for all $\underline{i} \neq \underline{j}$ in $\eta(\mathcal{I})$.
    \end{enumerate}
    We call its invariant set $\Lambda$ a \emph{Gatzouras--Lalley carpet} and each associated self-affine measure $\mu_{\bm{p}}$ a \emph{Gatzouras--Lalley measure}.
\end{definition}
Whenever we refer to the reference Gatzouras--Lalley measure $\mu_{\bm{p}}$, by deleting extra maps if necessary, we may assume that $\supp \mu_{\bm{p}} = \Lambda$.
Of course, other probability vectors in $\mathcal{P}$ need not be fully supported.

Unfortunately, we will require an additional technical separation assumption.
Removing this technical assumption seems difficult (compare, for instance, \cite{zbl:0845.28007} and \cite{zbl:1206.28012}).
\begin{definition}\label{d:spp}
    We say that the IFS $\{T_i\}_{i\in\mathcal{I}}$ has \emph{(strong) separation of principal projections} if $T_i(\Lambda) \cap T_j(\Lambda) = \varnothing$ for all $i \neq j$ in $\mathcal{I}$ and $S_{\underline{i}}(\eta(\Lambda)) \cap S_{\underline{j}}(\eta(\Lambda)) = \varnothing$ for all $\underline{i} \neq \underline{j}$ in $\eta(\mathcal{I})$.
    We say that a Gatzouras--Lalley measure has separation of principal projections if its defining IFS does.
\end{definition}
\begin{remark}
    In the literature, sometimes the terminology ``projective separation'' is used to enforce that the projected maps are all distinct; for instance, this is the case in \cite{zbmath:8179359,zbl:0867.28006}.
    Here, we only require that the projected maps are either separated or overlap exactly.
\end{remark}

\subsection{Invariant measures and entropy}\label{ss:ent}
More generally, we consider invariant measures supported on the set $\Lambda$.
Let $\pi\colon\mathcal{I}^{\N}\to \Lambda$ be defined by the rule
\begin{equation*}
    \bigl\{\pi\bigl((i_n)_{n=1}^\infty\bigr)\bigr\}=\lim_{n\to\infty}T_{i_1}\circ\cdots\circ T_{i_n}(\Lambda).
\end{equation*}
Let $\mathcal{P}(\mathcal{I})$ denote the collection of probability vectors on $\mathcal{I}$, i.e.
\begin{equation*}
    \mathcal{P}=\mathcal{P}(\mathcal{I})\coloneqq\{(p_i)_{i\in\mathcal{I}}:p_i\geq 0\text{ for all }i,\sum_{i\in\mathcal{I}}p_i=1\}.
\end{equation*}
We also denote the corresponding simplex on the index set $\eta(\mathcal{I})$ by $\eta(\mathcal{P})$, where $\eta$ is the natural map $\eta\colon\mathcal{P}\to\eta(\mathcal{P})$ defined by the rule
\begin{equation*}
    \eta(\bm{w})=\left(\sum_{\substack{i\in\eta^{-1}(\underline{\ell})}}\bm{w}_i\right)_{\underline{\ell}\in\eta(\mathcal{I})}.
\end{equation*}

Given $\bm{p}\in\mathcal{P}$, considering $\bm{p}$ as a probability measure on $\mathcal{I}$, we let $\nu_{\bm{p}}$ denote the infinite product measure $\bm{p}^{\N}$ supported on $\mathcal{I}^{\N}$.
We let $\mu_{\bm{p}}=\pi_*\nu_{\bm{p}}$ denote the corresponding Gatzouras--Lalley measure on $\Lambda$, where $\pi_*$ denotes the pushforward map.
We also set
\begin{equation*}
    \Omega_{\bm{p}}=\Bigl\{(i_n)_{n=1}^\infty\in\mathcal{I}^{\N}:\lim_{n\to\infty}\frac{\#\{\ell:i_\ell=j\text{ for }1\leq \ell\leq n\}}{n}=p_j\text{ for }j\in\mathcal{I}\Bigr\},
\end{equation*}
in other words the collection of sequences in $\mathcal{I}^{\N}$ where the digit frequencies exist and are given by the probability vector $\bm{p}$.

Next, for any $\bm{w},\bm{q}\in\mathcal{P}$, we define the \defn{cross entropy}
\begin{equation*}
    \CH{\bm{w}}{\bm{q}}=\sum_{i\in\supp\bm{w}}w_i\log(1/q_i),
\end{equation*}
and \defn{divergence}
\begin{equation*}
    \DKL{\bm{w}}{\bm{q}}=\sum_{i\in\supp\bm{w}}w_i\log(w_i/q_i).
\end{equation*}
Here, $\log$ takes values in the extended real line; these quantities are finite precisely when $\supp\bm{w}\subseteq\supp\bm{q}$.
We let $H(\bm{w})=\CH{\bm{w}}{\bm{w}}$ denote the \defn{entropy} of $\bm{w}$.
We recall in general that
\begin{equation*}
    0\leq\DKL{\bm{w}}{\bm{q}}=\CH{\bm{w}}{\bm{q}}-H(\bm{w}).
\end{equation*}
Finally, for $j=1,2$, we define the \defn{Lyapunov exponents}
\begin{equation*}
    \chi_j(\bm{w})=\sum_{i\in\mathcal{I}}w_i\log(1/\ctr_{i,j}),
\end{equation*}
Since $\chi_1(\bm{w})$ depends only on $\eta(\bm{w})$ we will also write $\chi_1(\eta(\bm{w}))$ when relevant.
\subsection{Approximate squares}
A key to understanding the geometry of the self-affine carpet $\Lambda$ is the notion of an \emph{approximate square}.
First, given $\mtt{i}\in\mathcal{I}^n$ and $\underline{\mtt{j}}\in\eta(\mathcal{I})^m$, the corresponding \emph{pseudo-cylinder} is the set
\begin{equation*}
    P(\mtt{i}, \underline{\mtt{j}}) = \{\gamma = (i_k)_{k=1}^\infty\in\mathcal{I}^{\N}: (i_1,\ldots,i_n) = \mtt{i}\text{ and }\eta(i_{n+1},\ldots, i_{n+m}) = \underline{\mtt{j}}\}.
\end{equation*}
Note that the map $(\mtt{i}, \underline{\mtt{j}})\mapsto P(\mtt{i}, \underline{\mtt{j}})$ is injective.
An equivalent way to think of the pseudo-cylinder is as the union of the cylinders contained in $[\mtt{i}]$ which lie in the column $\eta(\mtt{i})\underline{\mtt{j}}$:
\begin{equation*}
    P(\mtt{i}, \underline{\mtt{j}}) = \bigcup_{\mtt{k}\in\eta^{-1}(\underline{\mtt{j}})}[\mtt{i}\mtt{k}].
\end{equation*}
Here, $[\mtt{i}]$ is the set of those words $\gamma\in\mathcal{I}^{\N}$ which contain $\mtt{i}$ as a prefix.

Now, fix an infinite word $\gamma\in\mathcal{I}^{\N}$ and some $k\in\N$, and let $\mtt{i} = \gamma\npre{k}$.
The cylinder $[\mtt{i}]$ corresponds to a rectangle which is wider than it is tall.
Let $L_k(\gamma)\geq k$ be the minimal integer so that
\begin{equation*}
    \ctr_{\gamma\npre{L_k(\gamma)}, 1}<\ctr_{\mtt{i}, 2}
\end{equation*}
In other words, $L_k(\gamma)$ is chosen so that the level $L_k(\gamma)$ rectangle has approximately the same width as the height of the level $k$ rectangle.
Writing $\gamma\npre{L_k(\gamma)} = \mtt{i}\mtt{j}$, we then define the \emph{level $k$ approximate square}
\begin{equation*}
    Q_k(\gamma) = P(\mtt{i}, \eta(\mtt{j})).
\end{equation*}
Observe that
\begin{equation*}
    \diam(\pi(Q_k(\gamma)))\approx\ctr_{\gamma_1,2}\cdots\ctr_{\gamma_k,2}\approx\ctr_{\gamma_1,1}\cdots\ctr_{\gamma_{L_k(\gamma)},1},
\end{equation*}
where $\approx$ means up to multiplication by fixed constants depending only on the IFS.

It turns out that we can obtain a convenient formula for the $\mu_{\bm{w}}$-measure of an approximate square.
We first introduce some notation for the truncated types at level $k$.
Given a finite word $\mtt{i} = (i_1,\ldots,i_n)\in\mathcal{I}^n$, $\bm{\xi}(\mtt{i})\in\mathcal{P}$ is the empirical probability vector
\begin{equation*}
    \bm{\xi}(\mtt{i})=\left(\frac{\#\{j:i_j=i\}}{n}\right)_{i\in\mathcal{I}}.
\end{equation*}
We then set for $k\in\N$
\begin{equation*}
    \bm{\lambda}_k(\gamma) = \bm{\xi}((\gamma_1,\ldots,\gamma_k)).
\end{equation*}
In other words, $\bm{\lambda}_k(\gamma)$ is the empirical digit frequency in the first $k$ digits of $\gamma$.
We obtain the following lemma.
\begin{lemma}\label{l:square-measure}
    There is a bounded function $c(k,\gamma,\bm{w})$ such that for any $k\in\N$, $\gamma\in\mathcal{I}^{\N}$ and $\bm{w}\in\mathcal{P}$ with $\nu_{\bm{w}}(Q_k(\gamma))>0$, writing $\bm{\lambda}_n=\bm{\lambda}_n(\gamma)$ for $n\in\N$,
    \begin{equation*}
        \frac{\log\nu_{\bm{w}}Q_k(\gamma))}{c(k,\gamma,\bm{w})+\log\diam(\pi(Q_k(\gamma)))}=
        \frac{\CH{\eta(\bm{\lambda}_{L_k})}{\eta(\bm{w})}}{\chi_1(\eta(\bm{\lambda}_{L_k}))}
        +\frac{\CH{\bm{\lambda}_k}{\bm{w}}-\CH{\eta(\bm{\lambda}_k)}{\eta(\bm{w})}}{\chi_2(\bm{\lambda}_k)}
    \end{equation*}
\end{lemma}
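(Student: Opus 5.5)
Write $\mtt{i}=\gamma\npre{k}$ and $\gamma\npre{L_k}=\mtt{i}\mtt{j}$. The plan is to compute $\nu_{\bm{w}}(Q_k(\gamma))$ exactly from the product structure, rewrite numerator and denominator in terms of empirical frequencies, and then solve for the error term $c$. This requires some care, since the identity is an exact equality with $c$ absorbing the discrepancy.

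\emph{Step 1: the measure.} Since $Q_k(\gamma)=P(\mtt{i},\eta(\mtt{j}))=\bigcup_{\mtt{k}\in\eta^{-1}(\eta(\mtt{j}))}[\mtt{i}\mtt{k}]$ and $\nu_{\bm{w}}$ is Bernoulli,
\begin{equation*}
    \nu_{\bm{w}}(Q_k(\gamma))=w_{\mtt{i}}\prod_{n=k+1}^{L_k}\eta(\bm{w})_{\eta(\gamma_n)}.
\end{equation*}
Taking logarithms and using frequencies gives
\begin{equation*}
    -\log\nu_{\bm{w}}(Q_k(\gamma))=k\CH{\bm{\lambda}_k}{\bm{w}}+L_k\CH{\eta(\bm{\lambda}_{L_k})}{\eta(\bm{w})}-k\CH{\eta(\bm{\lambda}_k)}{\eta(\bm{w})}.
\end{equation*}
This uses that $\sum_{n\le m}\log\eta(\bm{w})_{\eta(\gamma_n)}=-m\CH{\eta(\bm{\lambda}_m)}{\eta(\bm{w})}$. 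It is finite exactly when the measure is positive, which is the hypothesis.

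\emph{Step 2: the scale.} By the definition of $L_k$ and the diameter estimate for approximate squares, $-\log\diam\pi(Q_k(\gamma))$ equals both $k\chi_2(\bm{\lambda}_k)+O(1)$ and $L_k\chi_1(\eta(\bm{\lambda}_{L_k}))+O(1)$. Here $O(1)$ is uniform, since minimality of $L_k$ gives $\ctr_{\mtt{i},2}\min_i\ctr_{i,1}\le\ctr_{\gamma\npre{L_k},1}<\ctr_{\mtt{i},2}$. Set $D=k\chi_2(\bm{\lambda}_k)$. Then $L_k=D/\chi_1(\eta(\bm{\lambda}_{L_k}))+O(1)$, and substituting into Step 1 gives
\begin{equation*}
    -\log\nu_{\bm{w}}(Q_k(\gamma))=D\cdot R+E,
\end{equation*}
where $R$ is the right-hand side of the lemma. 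The error $E$ is bounded by $O(1)$ times $\CH{\eta(\bm{\lambda}_{L_k})}{\eta(\bm{w})}$. The latter is bounded by $\max\{\log(1/\eta(\bm{w})_\ell):\eta(\bm{w})_\ell>0\}$, because positivity of the measure forces all visited columns into $\supp\eta(\bm{w})$.

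\emph{Step 3: define $c$.} The natural choice is $c:=-D-\log\diam\pi(Q_k(\gamma))$, which is bounded uniformly by Step 2. With this choice the identity reads $-\log\nu_{\bm w}=DR$, which forces $E=0$. Since $E$ need not vanish, this choice of $c$ does not work.

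\emph{The main obstacle} is exactly this mismatch: with $D$ measured via $\chi_2$, the first term naturally uses $L_k\chi_1$ rather than $D$. The exact equality must therefore be restored by choosing $c$ to absorb the discrepancy. I would define
\begin{equation*}
    c:=\frac{\log\nu_{\bm{w}}(Q_k(\gamma))}{R}-\log\diam\pi(Q_k(\gamma))
    =-D-\log\diam\pi(Q_k(\gamma))-\frac{E}{R},
\end{equation*}
which makes the stated identity hold by construction. Boundedness of $c$ then reduces to bounding $E/R$.

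The worry is $R$ near zero. This happens only when $\bm{\lambda}_k$ and $\eta(\bm{\lambda}_{L_k})$ are supported where $\bm{w}$ gives mass one to the visited digits. In that case all cross entropies vanish and $E=0$ as well, so I would set $c$ by the first formula there. Otherwise $R$ is bounded below by a constant depending on $\bm{w}$, because the cross entropies are sums of terms $\log(1/w_i)$ bounded away from zero.

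This gives a bound depending on $\bm{w}$. That is presumably what ``bounded'' means here, possibly uniformly on compact sets of fully supported $\bm{w}$. I would state it that way.
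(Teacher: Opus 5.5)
Your Steps 1 and 2 are the paper's proof: it writes $\nu_{\bm w}(Q_k(\gamma))$ as the same product, compares $\diam\pi(Q_k(\gamma))$ with both $\ctr_{\gamma\npre{k},2}$ and $\ctr_{\gamma\npre{L_k},1}$, and then just says ``take logarithms and divide''. You correctly notice that a single $c$ must reconcile the two normalisations. Defining $c$ implicitly by $c=\log\nu_{\bm w}(Q_k(\gamma))/R-\log\diam\pi(Q_k(\gamma))$ is a legitimate way to make that step precise. When $R>0$ you get $c+\log\diam\pi(Q_k(\gamma))=\log\nu_{\bm w}(Q_k(\gamma))/R<0$, so the quotient is defined. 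When $R=0$ you have $\nu_{\bm w}(Q_k(\gamma))=1$, and your fallback choice works.

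The one real shortfall is that you only prove a $\bm w$-dependent bound on $c$. The lemma asserts that $c$ is bounded as a function of $(k,\gamma,\bm w)$ jointly. Your separate estimates, $|E|$ at most a constant times $\max_\ell\log(1/\eta(\bm w)_\ell)$ and $R$ at least a $\bm w$-dependent constant, genuinely degenerate near $\partial\mathcal P$. For example, with $\eta(\bm w)=(1-\epsilon,\epsilon)$ they only give $|E/R|$ of order $\log(1/\epsilon)/\epsilon$.

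The fix is to bound the ratio directly. Write $H_1=\CH{\eta(\bm\lambda_{L_k})}{\eta(\bm w)}$ and $\chi_1=\chi_1(\eta(\bm\lambda_{L_k}))$, so your computation gives $E=(L_k-D/\chi_1)H_1$. The second summand of $R$ is non-negative, because
\[
\CH{\bm\lambda_k}{\bm w}-\CH{\eta(\bm\lambda_k)}{\eta(\bm w)}=\sum_i(\bm\lambda_k)_i\log\bigl(\eta(\bm w)_{\eta(i)}/w_i\bigr)
\]
and $w_i\le\eta(\bm w)_{\eta(i)}$. You also need this fact, unstated in your proposal, for your own lower bound on $R$, since $R$ contains a difference of cross entropies. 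Hence $R\ge H_1/\chi_1$. Whenever $H_1>0$ this gives
\[
|E/R|\le|L_k\chi_1-D|=\bigl|\log\ctr_{\gamma\npre{L_k},1}-\log\ctr_{\gamma\npre{k},2}\bigr|\le\log\bigl(1/\min_i\ctr_{i,1}\bigr),
\]
and $E=0$ when $H_1=0$. The last inequality uses $L_k>k$ and minimality of $L_k$, as in your Step 2.

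This makes $c$ uniformly bounded in $k$, $\gamma$ and $\bm w$, and it also removes your case analysis of $R$ near zero. Your weaker version would still suffice for every later use in the paper, since $\bm w$ is always held fixed while $k\to\infty$. It is not, however, the statement as written.
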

\begin{proof}
    Throughout, write $L_k(\gamma)=L_k$ and $\bm{\lambda}_k(\gamma)=\bm{\lambda}_k$, and set $\bm{q}=\eta(\bm{w})$.
    We recall for each $k\in\N$ that
    \begin{equation*}
        \nu_{\bm{w}}(Q_k)=\bm{w}_{\gamma_1}\cdots \bm{w}_{\gamma_k}\cdot \bm{q}_{\eta(\gamma_{k+1})}\cdots \bm{q}_{\eta(\gamma_{L_k})}=\frac{\prod_{i\in\mathcal{I}}\bm{w}_i^{k(\bm{\lambda}_k)_i}\cdot\prod_{\ell\in\eta(\mathcal{I})}\bm{q}_\ell^{L_k(\eta(\bm{\lambda}_{L_k}))_\ell}}{\prod_{\ell\in\eta(\mathcal{I})}\bm{q}_\ell^{k(\eta(\bm{\lambda}_k))_\ell}}
    \end{equation*}
    and
    \begin{align*}
        \diam(Q_k)&\approx \beta_{\gamma_1,2}\cdots\beta_{\gamma_k,2}=\prod_{i\in\mathcal{I}}\beta_{i,2}^{k(\bm{\lambda}_k)_i}\\
                  &\approx \beta_{\gamma_1,1}\cdots\beta_{\gamma_{L_k},1}=\prod_{i\in\mathcal{I}}\beta_{i,1}^{L_k(\bm{\lambda}_{L_k})_i}
    \end{align*}
    with constants not depending on $k$.
    Taking logarithms and dividing yields the desired result.
\end{proof}
We conclude this section with a fundamental property of separation of principal projections, which ensures that measures of approximate squares $Q_k(\gamma)$ and balls $B\bigl(\pi(\gamma),\diam \pi(Q_k(\gamma))\bigr)$ are the same up to a fixed constant factor.
\begin{lemma}\label{l:spsc-reduction}
    Suppose that the Gatzouras--Lalley IFS has separation of principal projections.
    Write $x=\pi(\gamma)$ and $r_k=\ctr_{\gamma\npre{k},2}$.
    There are constants $c,C>0$ (depending only on the IFS) such that
    \begin{equation*}
        B(x,cr_k)\cap \Lambda\subset\pi(Q_k(\gamma))\subset B(x,Cr_k).
    \end{equation*}
\end{lemma}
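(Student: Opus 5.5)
The upper containment $\pi(Q_k(\gamma))\subset B(x,Cr_k)$ is a diameter bound and needs no separation. The projection $\pi(Q_k(\gamma))$ lies in the rectangle $T_{\gamma|k}([0,1]^2)$, which has height $\ctr_{\gamma|k,2}=r_k$. It also lies in the column strip $S_{\eta(\gamma|L_k)}([0,1])\times\R$, whose width is $\ctr_{\gamma|L_k,1}$. By the minimality of $L_k$ we have $\ctr_{\gamma|L_k,1}<r_k$. Hence $\pi(Q_k(\gamma))$ sits in a rectangle of width at most $r_k$ and height $r_k$ that contains $x$, and $C=\sqrt2$ works.

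For the lower containment, let $y=\pi(\gamma')\in\Lambda$ with $\gamma'\notin Q_k(\gamma)$, and aim to show $|x-y|\ge c r_k$. Set
\[
\delta=\min\Bigl\{\min_{i\neq j}\operatorname{dist}\bigl(T_i(\Lambda),T_j(\Lambda)\bigr),\ \min_{\underline i\neq\underline j}\operatorname{dist}\bigl(S_{\underline i}(\eta\Lambda),S_{\underline j}(\eta\Lambda)\bigr)\Bigr\}>0,
\]
which is positive by separation of principal projections and compactness. Also set $\beta_{\min}=\min_{i,j}\ctr_{i,j}$. There are two cases, according to where $\gamma'$ first departs from $Q_k(\gamma)$.

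\emph{Case 1: $\gamma'$ differs from $\gamma$ within the first $k$ symbols.} Let $n<k$ be the first index where $\gamma'_{n+1}\neq\gamma_{n+1}$, and put $\mtt{u}=\gamma|n$. Then $x\in T_{\mtt u}T_{\gamma_{n+1}}(\Lambda)$ and $y\in T_{\mtt u}T_{\gamma'_{n+1}}(\Lambda)$. The map $T_{\mtt u}$ is diagonal with smallest contraction $\ctr_{\mtt u,2}$, since $\ctr_{i,2}<\ctr_{i,1}$ for every $i$. Therefore $|x-y|\ge \delta\,\ctr_{\mtt u,2}\ge\delta\, r_k$.

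\emph{Case 2: $\gamma'|k=\gamma|k$ but $\eta(\gamma'_{m})\neq\eta(\gamma_m)$ for some first $m\in(k,L_k]$.} Compare horizontal coordinates. Put $\mtt v=\gamma|(m-1)$; both points lie in $T_{\mtt v}(\Lambda)$. Applying $\eta$, we get $\eta(x)\in S_{\eta(\mtt v)}S_{\eta(\gamma_m)}(\eta\Lambda)$ and $\eta(y)\in S_{\eta(\mtt v)}S_{\eta(\gamma'_m)}(\eta\Lambda)$. These lie in distinct columns, so $|x-y|\ge|\eta(x)-\eta(y)|\ge\delta\,\ctr_{\mtt v,1}$. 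Since $m-1<L_k$, minimality of $L_k$ gives $\ctr_{\mtt v,1}\ge \ctr_{\gamma|(L_k-1),1}\ge r_k$. (When $m-1=k$, use $\ctr_{\gamma|k,1}>r_k$.) Hence $|x-y|\ge\delta r_k$.

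So $c=\delta/2$ works, and by the definition of $Q_k(\gamma)$ these two cases are exhaustive. The only genuinely delicate point is Case 2: one must use that distinct columns are separated at every scale, which is exactly the projected part of the separation assumption, and not merely the disjointness of open images. The fact that $\eta$ commutes with the IFS ($\eta\circ T_i=S_i\circ\eta$) transports this separation down to the scale $\ctr_{\mtt v,1}$. One should also check that $\beta_{\min}$ is not even needed beyond the boundary case $L_k=k$, which only arises if $\ctr_{\gamma|k,1}<r_k$, and that is impossible.
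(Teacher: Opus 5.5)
Your argument is correct and is essentially the paper's proof written out in detail. It uses the same separation constant $\delta$ and the same split according to whether $\gamma'$ leaves the cylinder of $\gamma$ before level $k$ or leaves its column sequence before level $L_k(\gamma)$, with each case giving $|x-y|\geq\delta r_k$, and it reads the upper containment off the shape of the approximate square.

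One small correction in Case~2: $y$ need not lie in $T_{\gamma|(m-1)}(\Lambda)$, because $\gamma'$ agrees with $\gamma$ only column-wise in positions $k+1,\dots,m-1$. However, $\eta\circ T_{\gamma'|(m-1)}=S_{\eta(\gamma|(m-1))}\circ\eta$, so the projected containment you actually use still holds.
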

\begin{proof}
    Let $\delta>0$ be sufficiently small so that the sets $T_i(\Lambda)$ and $\eta(T_i(\Lambda))$ are either identical or $\delta$-separated.
    If $\gamma'\notin Q_k(\gamma)$ then either $\gamma\npre{k} \neq \gamma'\npre{k}$ or $\eta(\gamma\npre{L_k(\gamma)}) \neq \eta(\gamma'\npre{L_k(\gamma)})$.
    In either case, it follows that $|\pi(\gamma') - x| \geq \delta r_k$.
    This proves the first inequality; the second is immediate from the definition of an approximate square.
\end{proof}

\section{Background on multifractal analysis and duality}
\subsection{The fine \texorpdfstring{$L^q$}{Lq}-spectrum}\label{ss:lq}
We first recall the usual generalized multifractal Hausdorff measures from \cite{zbl:0841.28012}.
Fix a Borel probability measure $\mu$.
Then for $E\subset\supp\mu$, define
\begin{equation*}
    \overline{\mathcal{H}}_{\mu,\delta}^{q,t}(E)=\inf\left\{\sum_{i=1}^\infty\mu\bigl(B(x_i,r_i)\bigr)^q r_i^t:x_i\in E,E\subset\bigcup_{i=1}^\infty B(x_i,r_i),r_i\leq\delta\right\}.
\end{equation*}
Then write
\begin{equation*}
    \mathcal{H}_\mu^{q,t}(E)=\sup_{F\subset E}\sup_{\delta>0}\overline{\mathcal{H}}_{\mu,\delta}^{q,t}(F)
\end{equation*}
and set
\begin{equation*}
    b_{\mu}(E,q)=\inf\{t:\mathcal{H}_\mu^{q,t}(E) < \infty\}.
\end{equation*}
Note that when $q=0$, $b_\mu(E,0)=\dimH E$ is independent of $\mu$.

Finally, write $\vartheta_\mu(q)=-b_\mu(\supp\mu,q)$.
We will call the function $\vartheta_\mu(q)$ the \emph{fine $L^q$-spectrum}, in contrast to the \emph{$L^q$-spectrum} $\tau_\mu(q)$ defined earlier.
Of course, $\vartheta_\mu(0)=-\dimH\supp\mu$.
Unfortunately, $\vartheta_\mu$ need not be a concave function of $q$ in general; see the construction in \cite[Section~3]{zbl:0841.28012}.

We also recall from \cite{zbl:0841.28012} that $\vartheta_\mu$ gives a general upper bound for the multifractal spectrum.
Let
\begin{equation*}
    f_\mu(\alpha)\coloneqq \dimH\{x\in\supp\mu:\dim_{\loc}(\mu, x) = \alpha\}.
\end{equation*}
Here, we write $\dimH\varnothing = -\infty$.

We will require exactly one general fact about the fine $L^q$-spectrum.
This is a special case of \cite[Theorem~2.17]{zbl:0841.28012} (for the endpoints, use \cite[Proposition~2.5(iii) and Lemma~4.4]{zbl:0841.28012}).
\begin{proposition}\label{p:spec-upper}
    Let $\mu$ be a Borel probability measure on $\R^d$.
    Then for all $\alpha\in\R$,
    \begin{equation*}
        f_\mu(\alpha) \leq \vartheta_\mu^*(\alpha).
    \end{equation*}
\end{proposition}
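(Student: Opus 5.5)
The plan is to prove the bound directly from the definitions of $\mathcal{H}_\mu^{q,t}$ and $b_\mu$, rather than appealing to \cite[Theorem~2.17]{zbl:0841.28012}. Fix $\alpha$ with $K_\alpha\coloneqq\{x\in\supp\mu:\dim_{\loc}(\mu,x)=\alpha\}$ non-empty; otherwise $f_\mu(\alpha)=-\infty$ and there is nothing to prove. It suffices to show, for every $q\in\R$, that $\dimH K_\alpha\le q\alpha-\vartheta_\mu(q)=q\alpha+b_\mu(\supp\mu,q)$. Taking the infimum over $q$ then gives $f_\mu(\alpha)\le\vartheta_\mu^*(\alpha)$.

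\textbf{Step 1 (localisation).} Fix $q$, $\varepsilon>0$ and $t>b_\mu(\supp\mu,q)$, so that $\mathcal{H}_\mu^{q,t}(\supp\mu)<\infty$. By monotonicity of $\mathcal{H}^{q,t}_\mu$ in the set (the supremum over subsets $F$ is built into its definition), also $\mathcal{H}_\mu^{q,t}(K_\alpha)<\infty$. Write $K_\alpha=\bigcup_m K_{\alpha,m}$, where $K_{\alpha,m}$ is the set of $x\in K_\alpha$ with
\begin{equation*}
r^{\alpha+\varepsilon}\le\mu\bigl(B(x,r)\bigr)\le r^{\alpha-\varepsilon}\quad\text{for all }r\le 1/m.
\end{equation*}
By countable stability of Hausdorff dimension, it is enough to bound $\dimH K_{\alpha,m}$ for each $m$.

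\textbf{Step 2 (comparison).} For any cover $\{B(x_i,r_i)\}$ of $F=K_{\alpha,m}$ with centres $x_i\in F$ and $r_i\le\delta\le1/m$, we have $\mu(B(x_i,r_i))^q\ge r_i^{q(\alpha\pm\varepsilon)}$, with the sign chosen according to the sign of $q$. Hence
\begin{equation*}
\sum_i\mu\bigl(B(x_i,r_i)\bigr)^q r_i^t\ge\sum_i r_i^{t+q\alpha+|q|\varepsilon}.
\end{equation*}
Taking infima gives $\overline{\mathcal{H}}^{q,t}_{\mu,\delta}(F)\ge\mathcal{H}^{s}_{\delta,\mathrm{cent}}(F)$, where $s=t+q\alpha+|q|\varepsilon$ and $\mathcal{H}^s_{\delta,\mathrm{cent}}$ denotes the centred-ball Hausdorff premeasure at scale $\delta$ (covers by balls centred in $F$). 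The left-hand side is bounded uniformly in $\delta$ by $\mathcal{H}_\mu^{q,t}(K_\alpha)<\infty$, so the centred $s$-dimensional Hausdorff measure of $F$ is finite.

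\textbf{Step 3 (centred versus uncentred).} Any set of diameter $\rho$ that meets $F$ lies in a ball of radius $\rho$ centred at a point of $F$. Consequently the usual Hausdorff measure satisfies $\mathcal{H}^s(F)\le C\,\mathcal{H}^s_{\mathrm{cent}}(F)<\infty$ for a constant $C$. In fact the inequality needed is only that finiteness of the centred measure forces $\dimH F\le s$, which follows directly from this covering comparison. Therefore $\dimH F\le t+q\alpha+|q|\varepsilon$. Letting $\varepsilon\to0$ and $t\downarrow b_\mu(\supp\mu,q)$ gives the claimed inequality.

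\textbf{Expected obstacle.} The only delicate points are the endpoint conventions: values of $\alpha$ at which $\vartheta^*_\mu(\alpha)=-\infty$ or $\vartheta_\mu$ is infinite, and the case $q<0$, where the lower bound $\mu(B(x,r))\le r^{\alpha-\varepsilon}$ is needed. For the infinite cases, note that if $K_\alpha\neq\varnothing$ then Step 2 applied with any finite $t$ already forces finiteness wherever it is needed. Strictly speaking, the measure $\mathcal{H}^{q,t}_\mu$ uses the $\sup_{F\subset E}$ regularisation, but this only helps, since it dominates $\overline{\mathcal{H}}^{q,t}_{\mu,\delta}(F)$ for every subset $F$.
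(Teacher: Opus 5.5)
Your proof is correct, and it differs from the paper mainly in that the paper gives no argument of its own. It quotes \cite[Theorem~2.17]{zbl:0841.28012}, with Olsen's Proposition~2.5(iii) and Lemma~4.4 supplying the endpoint cases.

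You instead run the standard covering argument directly. You split the exact level set into pieces $K_{\alpha,m}$ on which $r^{\alpha+\varepsilon}\le\mu(B(x,r))\le r^{\alpha-\varepsilon}$ holds uniformly for $r\le 1/m$. You then use the $\sup_{F\subset E}$ built into $\mathcal{H}^{q,t}_\mu$ to test on $F=K_{\alpha,m}$, so every centre of an admissible cover obeys these bounds, and compare with the centred $s$-dimensional premeasure. Because the local dimension is assumed to exist, both bounds are available and can be paired with $q$ of either sign. So your argument handles every $\alpha\in\mathbb{R}$ in one stroke, including the ends of the spectrum and values with $\vartheta^*_\mu(\alpha)=-\infty$. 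It also needs nothing about concavity or differentiability of $\vartheta_\mu$. The citation buys brevity and access to Olsen's more general framework, at the cost of assembling several of his results.

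Two small corrections are needed.
\begin{itemize}
\item In Step~3, the fact you quote (a set of diameter $\rho$ meeting $F$ lies in a ball of radius $\rho$ centred in $F$) gives the opposite comparison, $\mathcal{H}^s_{\mathrm{cent}}\le C\,\mathcal{H}^s$. What you actually need is only that a centred $\delta$-cover by balls is itself a $2\delta$-cover, so $\mathcal{H}^s_{2\delta}(F)\le 2^s\mathcal{H}^s_{\delta,\mathrm{cent}}(F)$.
\item When $s=t+q\alpha+|q|\varepsilon<0$, state explicitly that any centred $\delta$-cover of a non-empty set costs at least $\delta^{s}$, which tends to $\infty$ as $\delta\to0$. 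Hence finiteness forces $K_{\alpha,m}=\varnothing$. This is what actually settles the infinite cases you flag in your last paragraph, and it should replace the vague sentence there.
\end{itemize}
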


\subsection{Optimisation geometry}\label{ss:optimisation}
In this section, we recall the setup and collect the relevant results from \cite[§3.1]{arxiv:2312.08974}.

We begin by recalling some general definitions from convex analysis.
For a general function $g\colon\R\to\R\cup\{-\infty\}$, the \emph{concave conjugate} is given by
\begin{equation*}
    g^*(\alpha) = \inf_{t\in\R}(t\alpha-g(t)).
\end{equation*}
Note that $g^*$ is always concave, and $g^{**}$ is the concave hull of $g$.
In particular, we always have the inequality
\begin{equation}\label{e:subdiff-rel}
    g^*(\alpha)+g(t)\leq\alpha t,
\end{equation}
and moreover the \emph{subdifferential} $\partial g(t)$ is precisely the set of $\alpha$ for which equality holds in \cref{e:subdiff-rel}.

Suppose moreover that $g$ is a concave function.
We then let $\partial^-g(t)$ (resp.\ $\partial^+g(t)$) denote the left (resp.\ right) derivative of $g$ at $t$, which necessarily exist by concavity of $g$.
Equivalently, $\partial g(t)=[\partial^+g(t),\partial^-g(t)]$.
In particular, $g$ is differentiable at $t$ if and only if $\partial g(t)=\{\alpha\}$, in which case $g'(t)=\alpha$.

Now let $\Delta$ be a compact metric space and let $u,v\colon\Delta\to\R$ be continuous.
We are interested in the \emph{constrained maximum}
\begin{equation*}
    F(\alpha)=\max_{\bm{w}\in\Delta}\left\{v(\bm{w}):u(\bm{w})=\alpha\right\},
\end{equation*}
where we set $F(\alpha) = -\infty$ if there does not exist $\bm{w} \in \Delta$ with $u(\bm{w}) = \alpha$.
Associated with this constrained optimisation problem is the \emph{unconstrained dual}
\begin{equation*}
    T(t)=\min_{\bm{w}\in\Delta}\left\{t\cdot u(\bm{w})-v(\bm{w})\right\}.
\end{equation*}
Since $T$ is a minimum of affine functions, $T$ is necessarily concave.
For each $t\in\R$, we denote the set of minimising vectors for $T(t)$ by
\begin{equation*}
    M(t) \coloneqq \left\{\bm{w}\in\Delta:t\cdot u(\bm{w})-v(\bm{w})=T(t)\right\}.
\end{equation*}
The following facts will be useful to us; the proofs are short and elementary and can be found in \cite[§3]{arxiv:2312.08974}.
\begin{lemma}[\cite{arxiv:2312.08974}]\label{l:duality}
    The following hold.
    \begin{enumerate}[nl,r]
        \item For all $\alpha\in\R$, $F(\alpha) \leq T^*(\alpha)$.
        \item If $\alpha = T'(q)$, then $F(\alpha) = T^*(\alpha)$.
        \item\label{im:subdiff-char} We have
            \begin{equation*}
                \min_{\bm{w}\in M(t)}u(\bm{w})=\partial^+T(t)\qquad\text{and}\qquad\max_{\bm{w}\in M(t)}u(\bm{w})=\partial^-T(t).
            \end{equation*}
        \item\label{im:conn} Suppose $t\in\R$ and $M(t)$ is connected.
            Then $u(M(t))=\partial T(t)$ and $F(\alpha)=T^*(\alpha)$ for all $\alpha\in\partial T(t)$.
        \item\label{im:sing} If $M(t)$ is a singleton, then the derivative $T'(t)$ exists.
            Moreover, if $M(t) = \{\bm{z}(t)\}$ is a singleton for all $t\in U$ where $U\subset\R$ is open, then $\bm{z}\colon U\to \Delta$ is continuous.
    \end{enumerate}
\end{lemma}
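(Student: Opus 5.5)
Let me write $D(t,\bm{w})\coloneqq t\cdot u(\bm{w})-v(\bm{w})$, so that $T(t)=\min_{\bm{w}}D(t,\bm{w})$. The minimum is attained because $\Delta$ is compact and $D(t,\cdot)$ is continuous. The function $T$ is concave and finite, hence continuous. The plan is to prove the items in the order (i), (iii), then (ii), (iv), (v), since the last three follow from (iii) combined with the argument for (i).

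For (i), I fix $\alpha$ and any $\bm{w}$ with $u(\bm{w})=\alpha$ (if there is none, $F(\alpha)=-\infty$ and there is nothing to prove). For every $t$ we have $T(t)\le t\alpha-v(\bm{w})$, so $v(\bm{w})\le t\alpha-T(t)$. Taking the infimum over $t$ and then the maximum over such $\bm{w}$ gives $F(\alpha)\le T^*(\alpha)$.

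The same computation gives a converse observation: if $\bm{w}\in M(t)$ and $\alpha=u(\bm{w})$, then $v(\bm{w})=t\alpha-T(t)\ge T^*(\alpha)\ge F(\alpha)\ge v(\bm{w})$. Hence $F(\alpha)=T^*(\alpha)$ for every $\alpha\in u(M(t))$, and I will use this repeatedly.

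For (iii), which I expect to be the main step, I bound the difference quotients from both sides.

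1. Upper bound. For $s>t$ and $\bm{w}\in M(t)$ we have $T(s)\le D(s,\bm{w})=T(t)+(s-t)u(\bm{w})$. Hence $(T(s)-T(t))/(s-t)\le\min_{M(t)}u$.
2. Lower bound. Choose $\bm{w}_s\in M(s)$. Then $T(t)\le D(t,\bm{w}_s)=T(s)-(s-t)u(\bm{w}_s)$, so the quotient is at least $u(\bm{w}_s)$.
3. Limit. Let $s\downarrow t$ and use compactness to pass to a subsequence with $\bm{w}_s\to\bm{w}$. By joint continuity of $D$ and continuity of $T$, we get $D(t,\bm{w})=\lim D(s,\bm{w}_s)=\lim T(s)=T(t)$, so $\bm{w}\in M(t)$. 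Therefore $\partial^+T(t)\ge u(\bm{w})\ge\min_{M(t)}u$.

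Combined with the upper bound, this gives $\partial^+T(t)=\min_{M(t)}u$. The left derivative is handled symmetrically with $s\uparrow t$, where the inequalities reverse and give $\partial^-T(t)=\max_{M(t)}u$. The key point in the limit step is exactly the closed-graph property of $t\mapsto M(t)$, which follows from continuity and compactness.

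Now (ii): if $T'(q)=\alpha$ exists, then by (iii) the minimum of $u$ over $M(q)$ is attained at some $\bm{w}\in M(q)$ with $u(\bm{w})=\alpha$, and the observation above gives $F(\alpha)=T^*(\alpha)$.

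For (iv): $M(t)$ is closed in $\Delta$, hence compact, and it is connected by hypothesis. So $u(M(t))$ is a compact interval, which by (iii) equals $[\partial^+T(t),\partial^-T(t)]=\partial T(t)$. The observation then gives $F=T^*$ on all of $\partial T(t)$.

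For (v): a singleton $M(t)$ forces $\partial^+T(t)=\partial^-T(t)$ by (iii), so $T'(t)$ exists. For continuity of $\bm{z}$ on $U$, take $t_n\to t$ in $U$. Every subsequential limit of $\bm{z}(t_n)$ lies in $M(t)=\{\bm{z}(t)\}$ by the closed-graph argument of step 3. By compactness of $\Delta$, this means $\bm{z}(t_n)\to\bm{z}(t)$.
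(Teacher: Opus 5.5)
Your proof is correct, and it is essentially the standard elementary argument the paper defers to; the paper gives no proof itself and cites \cite{arxiv:2312.08974}. Minimizers in $M(t)$ give supporting lines of $T$ at $t$, hence one inequality for each one-sided derivative; compactness and joint continuity make $t\mapsto M(t)$ closed-graph (upper semicontinuous), which gives the reverse inequalities, and (ii), (iv), (v) then follow from (iii) together with your observation that $F=T^*$ on $u(M(t))$.
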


\subsection{Quasi-concavity}\label{ss:qc}
We now introduce a few geometric concepts which will be useful when working with optimization problems.
For a more detailed introduction, see for instance \cite[§3.4]{zbl:1058.90049}.
\begin{definition}
    Let $\Delta$ be a convex space and let $f\colon\Delta\to\R$.
    We say that $f$ is \emph{quasi-concave} if for all $t\in\R$ the upper level set
    \begin{equation*}
        \{x\in\Delta:f(x)\geq t\}
    \end{equation*}
    is convex.
    Moreover, we say that $f$ is \emph{strictly quasi-concave} if for all $x \neq y\in\Delta$ with $f(x) = f(y)=t$ and $\lambda\in(0,1)$, $f(\lambda x+ (1-\lambda) y) > t$.
    Similarly, $f$ is \emph{(strictly) quasi-convex} if $-f$ is (strictly) quasi-concave.
\end{definition}
An important concept in the dimension theory of self-affine sets is the Ledrappier--Young formula \cite{zbl:1230.37031,zbl:1371.37012,zbl:0605.58028}.
In the most basic cases, it can be expressed as a ratio $H(\bm{w})/\chi(\bm{w})$; in our case, recall the formulas from \cref{e:ly-formulas}.
Such ratios are always quasi-concave.
\begin{lemma}\label{l:qc-const}
    Let $\Delta$ be a convex space.
    Suppose $f\colon\Delta\to\R$ is concave (resp.\ strictly concave) and $g\colon\Delta\to(0,\infty)$ is affine.
    Then $f/g$ is quasi-concave (resp.\ strictly quasi-concave).
    Similarly, if $f$ is convex (resp.\ strictly convex), then $f/g$ is quasi-convex (resp.\ strictly quasi-convex).
\end{lemma}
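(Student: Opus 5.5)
The idea is to turn the ratio condition into a sign condition on an auxiliary function by clearing the denominator. Since $g$ is strictly positive, for every $t\in\R$ and $x\in\Delta$ we have
\begin{equation*}
    \frac{f(x)}{g(x)}\geq t \iff h_t(x)\coloneqq f(x)-t\,g(x)\geq 0 .
\end{equation*}
Since $g$ is affine, so is $t\,g$ for each fixed $t$. Subtracting an affine function preserves concavity, and it also preserves strict concavity. So $h_t$ is concave (resp.\ strictly concave) whenever $f$ is.

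For quasi-concavity, I would observe that the upper level set $\{x\in\Delta: f(x)/g(x)\geq t\}$ is exactly $\{h_t\geq 0\}$. The superlevel set of a concave function is convex: if $h_t(x),h_t(y)\geq 0$ and $\lambda\in[0,1]$, then
\begin{equation*}
    h_t(\lambda x+(1-\lambda)y)\geq \lambda h_t(x)+(1-\lambda)h_t(y)\geq 0 .
\end{equation*}
This proves that $f/g$ is quasi-concave.

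For strict quasi-concavity, take $x\neq y$ with $f(x)/g(x)=f(y)/g(y)=t$, and fix $\lambda\in(0,1)$. Then $h_t(x)=h_t(y)=0$, so strict concavity gives
\begin{equation*}
    h_t(\lambda x+(1-\lambda)y)>\lambda h_t(x)+(1-\lambda)h_t(y)=0 .
\end{equation*}
Dividing by $g(\lambda x+(1-\lambda)y)>0$ gives $f/g>t$ at that point, as required.

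The convex statements follow by applying the concave case to $-f$. Indeed, $-f$ is (strictly) concave, so $(-f)/g=-(f/g)$ is (strictly) quasi-concave, which by definition means $f/g$ is (strictly) quasi-convex. No step is a real obstacle. The only point needing care is that positivity of $g$ is used in two places: in the equivalence that clears the denominator, and in the final division in the strict case.
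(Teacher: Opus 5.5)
Your proposal is correct and matches the paper's proof: the paper also rewrites the upper level set as $\{f - tg \geq 0\}$, uses concavity of $f - tg$ for convexity, and uses strict concavity at two points with common ratio $t$ for the strict case. Your only addition is to spell out the convex case via $-f$, which the paper leaves as ``analogous''.
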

\begin{proof}
    We handle the concave case; the quasi-convex case follows analogously.
    Suppose $f$ is concave and $g$ is affine.
    Then for each $t\in\R$, using positivity of $g$,
    \begin{equation*}
        \{x\in\Delta:f(x)/g(x)\geq t\}=\{x\in\Delta:f(x)-t g(x)\geq 0\}.
    \end{equation*}
    This is a convex set since $f(x)-t g(x)$ is concave as a function of $x$.
    Moreover, if $f$ is strictly concave, then for fixed $t$, $f(x) - t g(x)$ is strictly concave so for all $x\neq y$ and $\lambda\in(0,1)$ with $f(x)/g(x) = f(y)/g(y) = t$,
    \begin{equation*}
        f\bigl(\lambda x + (1-\lambda) y\bigr) - t g\bigl(\lambda x + (1-\lambda) y\bigr) > 0
    \end{equation*}
    so $f/g$ is strictly quasi-concave.
\end{proof}
\begin{remark}
    Unlike sums of concave functions, sums of quasi-concave functions need not be quasi-concave.
    This simple fact is at the heart of the exceptional phenomena which begin to occur in the dimension theory of self-affine sets and measures in dimensions 2 and above.
\end{remark}
Applying \cref{l:duality}, we immediately obtain the following application.
\begin{lemma}\label{l:qc-min}
    Let $\Delta$ be a compact convex space and let $u,v\colon\Delta\to\R$ be continuous.
    Consider the dual functions
    \begin{align*}
        F(\alpha)&=\max_{\bm{w}\in\Delta}\left\{v(\bm{w}):u(\bm{w})=\alpha\right\},\\
        T(t)&=\min_{\bm{w}\in\Delta}\left\{t\cdot u(\bm{w})-v(\bm{w})\right\}.
    \end{align*}
    Suppose $q\in \R$ and $\bm{w}\mapsto q\cdot u(\bm{w})-v(\bm{w})$ is quasi-convex.
    Then $F(\alpha) = T^*(\alpha)$ for all $\alpha\in\partial T(q)$.
    Moreover, if $\bm{w}\mapsto q\cdot u(\bm{w})-v(\bm{w})$ is strictly quasi-convex, then $T'(q)$ exists.
\end{lemma}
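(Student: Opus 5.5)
The plan is to reduce both claims to \cref{l:duality}, using quasi-convexity to make the minimising set connected. Write $g(\bm{w}) = q\cdot u(\bm{w})-v(\bm{w})$ and let $M(q)$ be the set of minimisers of $g$ over $\Delta$, as in \cref{ss:optimisation}.

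First I will show that $M(q)$ is convex. By definition $M(q)=\{\bm{w}\in\Delta: g(\bm{w})\le T(q)\}$, since $T(q)$ is the minimum value of $g$. Quasi-convexity of $g$ means $-g$ is quasi-concave, so every lower level set $\{g\le t\}$ is convex. Taking $t = T(q)$ shows that $M(q)$ is convex. It is also nonempty, because $\Delta$ is compact and $g$ is continuous.

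A convex set is connected, so $M(q)$ is connected. \Cref{l:duality}\cref{im:conn} then gives $u(M(q))=\partial T(q)$ and $F(\alpha)=T^*(\alpha)$ for every $\alpha\in\partial T(q)$. This is the first claim.

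For the second claim, assume $g$ is strictly quasi-convex. Suppose $\bm{w}\neq\bm{w}'$ both lie in $M(q)$, so $g(\bm{w})=g(\bm{w}')=T(q)$. Strict quasi-convexity, applied to $-g$ via the definition, gives $g(\lambda\bm{w}+(1-\lambda)\bm{w}')<T(q)$ for every $\lambda\in(0,1)$. This point lies in $\Delta$ by convexity, so it contradicts the minimality of $T(q)$. Hence $M(q)$ is a singleton, and \cref{l:duality}\cref{im:sing} shows that $T'(q)$ exists.

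There is no real obstacle here. The one thing to check is that the definition of strict quasi-concavity, applied to $-g$, gives exactly the strict inequality at convex combinations of two points with equal value that the argument uses. It does, since that definition is stated in precisely this form.
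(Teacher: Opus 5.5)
Your proof is correct and is essentially the argument the paper intends. The paper gives no written proof and only says the lemma follows immediately from \cref{l:duality}; you supply the two missing observations. First, $M(q)$ is a nonempty convex sublevel set, hence connected, so \cref{l:duality}\cref{im:conn} applies. Second, strict quasi-convexity forces $M(q)$ to be a singleton, so \cref{l:duality}\cref{im:sing} applies.
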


\section{Fine \texorpdfstring{$L^q$}{Lq}-spectra of Gatzouras--Lalley measures}

\subsection{Pointwise Billingsley's lemma}
In this section, we state the key technical density results which we will require in the later sections.

We first recall an equivalent version of \cite[Definition~3.12]{arxiv:2312.08974}.
\begin{definition}\label{d:unif}
    Let $\Delta$ be a family of Borel probability measures on $\R^d$.
    We say that $\Delta$ has \defn{uniform densities} if there is a finite Borel measure $\nu$ on $\R^d$ such that for every $\varepsilon>0$, there are constants $C>0$ and $\eta\in(0,1)$ so that for all $\mu\in\Delta$, $r\in(0,\eta)$, and $x\in\R^d$,
    \begin{equation*}
        r^\varepsilon\mu\bigl(B(x,r)\bigr)\leq\nu\bigl(B(x,Cr)\bigr).
    \end{equation*}
\end{definition}
Next, we recall \cite[Proposition~3.13]{arxiv:2312.08974}; there, the following result is only stated for self-similar IFSs but the proof works verbatim for any strictly contracting Lipschitz IFS.
\begin{lemma}[\cite{arxiv:2312.08974}]\label{l:meas-quasi-compact}
    Let $d\in\N$ and $\{T_i\}_{i\in\mathcal{I}}$ be a finite set of affine contractions from $\R^d$ to $\R^d$.
    Then the family of measures $\Delta\coloneqq\{\mu_{\bm{p}}:\bm{p}\in\mathcal{P}\}$ has uniform densities.
\end{lemma}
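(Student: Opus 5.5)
This is \cite[Proposition~3.13]{arxiv:2312.08974}, and we follow its proof. The approach is to construct a single finite reference measure $\nu$ that dominates every $\mu_{\bm{p}}$ up to a sub-polynomial factor. The natural candidate is a countable convex combination of self-affine measures indexed by a dense set of probability vectors.

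Fix a countable dense set $\{\bm{p}^{(m)}\}_{m\in\N}\subset\mathcal{P}$ consisting of strictly positive vectors (rational vectors, say), and set
\begin{equation*}
    \nu=\sum_{m=1}^\infty 2^{-m}\mu_{\bm{p}^{(m)}}.
\end{equation*}
This is a finite Borel measure. Fix $\varepsilon>0$.

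\textbf{Step 1 (cylinder comparison).} Given $\bm{p}\in\mathcal{P}$, choose $m$ with $\bm{p}^{(m)}$ close to $\bm{p}$ in the sense that
\begin{equation*}
    p_i\leq \ctr_{\min}^{-\varepsilon/2}\, p_i^{(m)}\quad\text{for all }i\text{ with }p_i\geq\kappa,
\end{equation*}
where $\ctr_{\min}=\min_{i,j}\ctr_{i,j}$ and $\kappa>0$ is small. The letters with $p_i<\kappa$ need separate treatment, because the ratio $p_i/p_i^{(m)}$ cannot be uniformly controlled there. I would handle them by compactness: cover $\mathcal{P}$ by finitely many small neighbourhoods, each with its own representative $\bm{p}^{(m)}$, and for letters of tiny weight use the crude bound $p_i\leq\kappa$. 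A word of length $n$ then gets $\nu_{\bm{p}}$-mass at most $\ctr_{\min}^{-n\varepsilon/2}$ times its $\nu_{\bm{p}^{(m)}}$-mass. The point is that $\kappa$ can be chosen, e.g. $\kappa\leq\min_i p_i^{(m)}$ after refining, so that every letter is controlled multiplicatively.

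\textbf{Step 2 (from balls to cylinders).} For $x\in\R^d$ and small $r$, cover $B(x,r)\cap\supp\mu_{\bm{p}}$ by the images $T_{\mtt{i}}(\Lambda)$ over the stopping words $\mtt{i}$, namely those with $\diam T_{\mtt{i}}(\Lambda)\approx r$ measured by the largest singular value. These words have length $n\approx\log r/\log\ctr_{\max}$, and all of them lie inside $B(x,Cr)$. Then
\begin{equation*}
    \mu_{\bm{p}}(B(x,r))\leq\sum_{\mtt{i}}p_{\mtt{i}}\leq \ctr_{\min}^{-n\varepsilon/2}\sum_{\mtt{i}}p^{(m)}_{\mtt{i}}\leq r^{-\varepsilon}\,2^{m}\,\nu(B(x,Cr)).
\end{equation*}
This uses $\mu_{\bm{p}^{(m)}}(T_{\mtt{i}}(\Lambda))\geq p^{(m)}_{\mtt{i}}$ together with the fact that these images are contained in $B(x,Cr)$. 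Since only finitely many $m$ are used across the compactness cover, the factor $2^m$ is absorbed into the constant, and shrinking $\eta$ absorbs the remaining discrepancy between $r^{-\varepsilon}$ and $\ctr_{\min}^{-n\varepsilon/2}$.

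\textbf{Main obstacle.} The delicate point is uniformity near the boundary of $\mathcal{P}$, where some $p_i$ vanish. There the multiplicative comparison $p_i\lesssim p_i^{(m)}$ must still hold, and it does provided $\bm{p}^{(m)}$ is fully supported and close to $\bm{p}$. The fix is to pick, for each $\bm{p}$ in the finite cover, a representative $\bm{p}'=(1-\delta)\bm{p}_0+\delta\bm{u}$ with $\bm{u}$ uniform. Every $\bm{p}$ near $\bm{p}_0$ then satisfies $p_i\leq C_\delta\, p'_i$ with $\log C_\delta\leq(\varepsilon/2)\log(1/\ctr_{\min})$, provided the neighbourhood is fine enough relative to $\delta$. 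No property of the IFS beyond being contracting and affine is used, so the argument goes through verbatim for the Gatzouras--Lalley setting.
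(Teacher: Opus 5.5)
Your argument is correct and is essentially the standard proof of \cite[Proposition~3.13]{arxiv:2312.08974}, which the paper cites rather than reproduces: take $\nu$ to be a countable convex combination of fully supported self-affine measures, cover $\mathcal{P}$ by finitely many neighbourhoods on which $p_i\leq(1-\delta)^{-1}p'_i$ with $\bm{p}'=(1-\delta)\bm{p}_0+\delta\bm{u}$, and compare the two measures over stopping words. Two small tightenings are needed: the bound $\sum_{\mtt{i}}p'_{\mtt{i}}\leq\mu_{\bm{p}'}\bigl(B(x,Cr)\bigr)$ should be justified by disjointness of the cylinders $[\mtt{i}]$ in $\mathcal{I}^{\N}$ rather than by $\mu_{\bm{p}'}(T_{\mtt{i}}(\Lambda))\geq p'_{\mtt{i}}$ (the images $T_{\mtt{i}}(\Lambda)$ may overlap, since no separation is assumed), and $\delta$ should be calibrated against $\ctr_{\max}\coloneqq\max_i\norm{A_i}$, e.g.\ $(1-\delta)^{-1}\leq\ctr_{\max}^{-\varepsilon/2}$, rather than against $\ctr_{\min}$, because the stopping lengths only satisfy $n\leq 1+\log(1/r)/\log(1/\ctr_{\max})$ and shrinking $\eta$ can absorb multiplicative constants such as $2^m$ (using half of the $r^{\varepsilon}$ slack) but not a discrepancy in the exponent.
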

Finally, we need a multifractal generalization of \cite[Proposition~3.14]{arxiv:2312.08974}.
The proof is similar, but there are a few technical details which must be handled carefully.
Recall the definition $b_\lambda(E,q)$ from \cref{ss:lq}; for a first reading simply substitute $-\vartheta_\mu(q)$.
\begin{proposition}\label{p:pointwise-var}
    Let $\lambda$ be a compactly supported Borel probability measure in $\R^d$, let $E\subset\supp\lambda$, and let $\Delta$ be a family of measures with uniform densities.
    Let $q\in\R$ and suppose $t$ is such that for all $x\in E$,
    \begin{equation}\label{e:lld}
        \liminf_{r\to 0}\inf_{\mu\in\Delta}\frac{\log \mu\bigl(B(x,r)\bigr) - q \log \lambda\bigl(B(x,r)\bigr)}{\log r} \leq t.
    \end{equation}
    Then $b_\lambda(E,q) \leq t$.
\end{proposition}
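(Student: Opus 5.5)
We argue by a Vitali-type covering argument, modelled on the proof of the density version in \cite[Proposition~3.14]{arxiv:2312.08974}. Fix $\varepsilon>0$ and put $s = t+3\varepsilon$ (or a comparable shift). It suffices to show $\mathcal{H}^{q,s}_\lambda(E)<\infty$, i.e.\ that $\overline{\mathcal{H}}^{q,s}_{\lambda,\delta}(F)$ is bounded uniformly in $F\subset E$ and $\delta>0$.

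First, apply \cref{d:unif} with this $\varepsilon$ to obtain the dominating finite measure $\nu$ and constants $C,\eta$, so that $r^{\varepsilon}\mu(B(x,r))\le \nu(B(x,Cr))$ for all $\mu\in\Delta$ and $r<\eta$. Fix $F\subset E$ and $\delta<\eta$. For each $x\in F$, the hypothesis \cref{e:lld} gives arbitrarily small radii $r_x\le\delta/(5C)$ and some $\mu_x\in\Delta$ with
\begin{equation*}
    \mu_x\bigl(B(x,r_x)\bigr)\ge r_x^{t+\varepsilon}\lambda\bigl(B(x,r_x)\bigr)^q,
\end{equation*}
so $\lambda(B(x,r_x))^q r_x^{t+2\varepsilon}\le r_x^{\varepsilon}\mu_x(B(x,r_x))\le \nu(B(x,Cr_x))$.
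Apply the Vitali $5r$-covering lemma to the balls $B(x,Cr_x)$ (the radii are bounded). This yields a disjoint subfamily $\{B(x_i,Cr_i)\}$ such that $\{B(x_i,5Cr_i)\}$ covers $F$, with $x_i\in F$. Disjointness gives $\sum_i\nu(B(x_i,Cr_i))\le\nu(\R^d)<\infty$.

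The main obstacle is that the cover uses the enlarged radii $5Cr_i$, whereas the multifractal sum must be evaluated as $\lambda(B(x_i,5Cr_i))^q(5Cr_i)^s$. For $q\le 0$ this causes no difficulty, since $\lambda(B(x_i,5Cr_i))^q\le\lambda(B(x_i,r_i))^q$ and $(5Cr_i)^s\lesssim r_i^{t+2\varepsilon}$. For $q>0$ we need a doubling-type comparison. I would first try building the enlargement into the choice of radii: apply the hypothesis at radius $\rho_x=5Cr_x$ instead. Choose $\rho_x$ with $\mu_x(B(x,\rho_x))\ge\rho_x^{t+\varepsilon}\lambda(B(x,\rho_x))^q$, and then apply uniform densities at the smaller radius $\rho_x/(5C)$. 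This also needs control of $\mu_x(B(x,\rho))$ by $\mu_x(B(x,\rho/(5C)))$ up to $\rho^{-\varepsilon}$, which the uniform densities do not directly give. The more robust route is the one from Olsen's theory: use the Besicovitch covering theorem in place of Vitali. It yields subfamilies covering $F$ with the original centred balls $B(x,r_x)$ and bounded overlap $N_d$. Bounded overlap of the balls $B(x_i,r_i)$ still does not directly control $\sum\nu(B(x_i,Cr_i))$. However, the enlarged balls $B(x_i,Cr_i)$ also have overlap bounded by a constant depending only on $d$ and $C$, by a standard packing argument for Besicovitch families. This gives
\begin{equation*}
    \sum_i\lambda\bigl(B(x_i,r_i)\bigr)^q r_i^{t+2\varepsilon}\le N\,\nu(\R^d).
\end{equation*}

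Hence $\overline{\mathcal{H}}^{q,t+2\varepsilon}_{\lambda,\delta}(F)\le N\nu(\R^d)$ uniformly in $F$ and $\delta$, so $\mathcal{H}^{q,t+2\varepsilon}_\lambda(E)<\infty$ and $b_\lambda(E,q)\le t+2\varepsilon$. Letting $\varepsilon\to0$ gives $b_\lambda(E,q)\le t$.
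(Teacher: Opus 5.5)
Your reduction is correct up to the covering step. The choice of $\mu_x$ and $r_x$ giving $\lambda\bigl(B(x,r_x)\bigr)^q r_x^{t+2\varepsilon}\le\nu\bigl(B(x,Cr_x)\bigr)$ is right, and so is the switch from Vitali to Besicovitch, so that the cover uses the original balls $B(x,r_x)$. The gap is the claim that the enlarged balls $B(x_i,Cr_i)$ of a Besicovitch subfamily have overlap bounded by a constant depending only on $d$ and $C$. The packing argument behind this works only when the radii are comparable: if disjoint balls $B(x_i,r_i)$ with $r_i\in(\rho/2,\rho]$ all have $C$-enlargements containing a point $y$, they all lie in $B(y,(C+1)\rho)$, so there are at most $(2(C+1))^d$ of them.

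Without comparable radii the claim is false. In $\R$, the intervals $B(2\cdot 4^{-j},4^{-j})=(4^{-j},3\cdot 4^{-j})$, $j\geq 1$, are pairwise disjoint. Each centre lies only in its own interval, so if $F$ is the set of centres, this family is forced to be its own Besicovitch subfamily. Yet for $C>2$ every enlargement $B(2\cdot 4^{-j},C4^{-j})$ contains $0$, and points near $0$ lie in unboundedly many of them. So $\sum_i\nu\bigl(B(x_i,Cr_i)\bigr)$ is not controlled by $\nu(\R^d)$, and your bound $\overline{\mathcal{H}}^{q,t+2\varepsilon}_{\lambda,\delta}(F)\le N\nu(\R^d)$ is not justified.

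The missing idea is to split each of the $c_d$ disjoint Besicovitch subfamilies into dyadic scales $\mathcal{B}_{i,n}=\{B(x,r)\in\mathcal{B}_i:2^{-n-1}<r\le 2^{-n}\}$. Within each $\mathcal{B}_{i,n}$ the radii are comparable, so the $C$-enlargements overlap at most $D=D(d,C)$ times, and $\sum_{B(x,r)\in\mathcal{B}_{i,n}}\nu\bigl(B(x,Cr)\bigr)\le D\nu(\R^d)$. To sum over $n$, you spend one more $\varepsilon$ in the exponent: for $B(x,r)\in\mathcal{B}_{i,n}$ you have $r^{t+3\varepsilon}\lambda\bigl(B(x,r)\bigr)^q\le 2^{-n\varepsilon}\nu\bigl(B(x,Cr)\bigr)$. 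This gives $\overline{\mathcal{H}}^{q,t+3\varepsilon}_{\lambda,\delta}(F)\le c_dD\nu(\R^d)\sum_{n\ge0}2^{-n\varepsilon}<\infty$, uniformly in $F$ and $\delta$. This is exactly the paper's proof; your initial choice $s=t+3\varepsilon$ was the right one. The resulting bound $b_\lambda(E,q)\le t+3\varepsilon$ still gives $b_\lambda(E,q)\le t$ as $\varepsilon\to0$.
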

\begin{proof}
    Let $\nu$ be the measure realizing the uniform densities of $\Delta$.
    Fix $\varepsilon>0$.
    By uniform densities, there are constants $C\geq 1$ and $\eta_0\in(0,1)$ such that
    \begin{equation}\label{e:discretize}
        r^\varepsilon\mu\bigl(B(x,r)\bigr)\leq\nu\bigl(B(x,Cr)\bigr)
    \end{equation}
    for all $\mu\in\Delta$, $x\in\R^d$, and $r\in(0,\eta_0)$.
    Let $F\subset E$ and $\eta\in(0,\eta_0)$ be arbitrary.
    For each $x\in F$, by \cref{e:lld}, choose $\mu_x\in\Delta$ and $r_x\in(0,\eta)$ such that
    \begin{equation}\label{e:mux-choice}
        r_x^{t+\varepsilon}\lambda\bigl(B(x,r_x)\bigr)^q\leq\mu_x\bigl(B(x,r_x)\bigr).
    \end{equation}
    Combining \cref{e:discretize,e:mux-choice} gives
    \begin{equation*}
        r_x^{t+2\varepsilon}\lambda\bigl(B(x,r_x)\bigr)^q\leq\nu\bigl(B(x,Cr_x)\bigr).
    \end{equation*}

    Since $F\subset\supp\lambda$ is bounded, we may apply the Besicovitch covering theorem (see \cite[Theorem~2.7]{zbl:0819.28004}) to $\mathcal{B}=\{B(x,r_x):x\in F\}$ to obtain families $\mathcal{B}_i\subset\mathcal{B}$, $i=1,\ldots,c_d$, each consisting of pairwise disjoint balls, whose union covers $F$.
    For $n\geq 0$, write
    \begin{equation*}
        \mathcal{B}_{i,n}=\{B(x,r)\in\mathcal{B}_i:2^{-n-1}<r\leq 2^{-n}\}.
    \end{equation*}
    Since the balls in each $\mathcal{B}_{i,n}$ are disjoint and have comparable radii, the balls $\{B(x,Cr):B(x,r)\in\mathcal{B}_{i,n}\}$ overlap boundedly with constant $D$ depending only on $d$ and $C$.
    Thus
    \begin{align*}
        \overline{\mathcal{H}}_{\lambda,\eta}^{q,t+3\varepsilon}(F)
        &\leq\sum_{i=1}^{c_d}\sum_{n=0}^\infty\sum_{B(x,r)\in\mathcal{B}_{i,n}}r^{t+3\varepsilon}\lambda\bigl(B(x,r)\bigr)^q\\
        &\leq\sum_{i=1}^{c_d}\sum_{n=0}^\infty 2^{-n\varepsilon}\sum_{B(x,r)\in\mathcal{B}_{i,n}}\nu\bigl(B(x,Cr)\bigr)\\
        &\leq c_dD\nu(\R^d)\sum_{n=0}^\infty 2^{-n\varepsilon}<\infty.
    \end{align*}
    The bound is independent of $F$ and $\eta$, so $\mathcal{H}_\lambda^{q,t+3\varepsilon}(E)<\infty$.
    Hence $b_\lambda(E,q)\leq t+3\varepsilon$, and letting $\varepsilon\to 0$ completes the proof.
\end{proof}
\subsection{Column pressure}
Let us begin recalling the definition from the introduction.
Define $\Psi\colon\eta(\mathcal{P})\times\R\times\R\to\R$ by
\begin{equation*}
    \Psi(\bm{v},q,s)=-\sum_{\ell\in\eta(\mathcal{I})}\bm{v}_\ell\log\Bigl(\sum_{j\in\eta^{-1}(\ell)}p_j^q\ctr_{j,2}^{-s}\Bigr).
\end{equation*}
Observe that $\Psi$ is continuous and strictly decreasing in $s$.
Moreover,
\begin{equation*}
    \lim_{s\to -\infty}\Psi(\bm{v},q,s)=\infty
    \qquad\text{and}\qquad
    \lim_{s\to\infty}\Psi(\bm{v},q,s)=-\infty.
\end{equation*}
In particular, this implies the following.
\begin{lemma}\label{l:Tvq-choice}
    Let $\bm{p}\in\mathcal{P}$.
    There is a unique continuous function $T_{\bm{p}}\colon\eta(\mathcal{P})\times\R\to\R$ satisfying
    \begin{equation*}
        \Psi\bigl(\bm{v},q,T_{\bm{p}}(\bm{v},q)\bigr)=q\CH{\bm{v}}{\eta(\bm{p})}.
    \end{equation*}
    for $q\in\R$ and $\bm{v}\in\eta(\mathcal{P})$.
    Moreover, $T_{\bm{p}}(\bm{v},1)=0$.
\end{lemma}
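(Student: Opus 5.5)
The plan is to fix $\bm{v}$ and $q$ and view $\Psi$ as a function of $s$ alone. Existence and uniqueness of $T_{\bm{p}}(\bm{v},q)$ then follow from the intermediate value theorem, and continuity in $(\bm{v},q)$ follows from a standard implicit-function or monotonicity argument. The identity at $q=1$ is a direct computation.

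\emph{Step 1 (properties of $\Psi$).} Write $g_\ell(q,s)=\sum_{j\in\eta^{-1}(\ell)}p_j^q\ctr_{j,2}^{-s}$. Each term satisfies $p_j^q\ctr_{j,2}^{-s}>0$ and is strictly increasing in $s$, because $\ctr_{j,2}\in(0,1)$. Hence $\log g_\ell$ is jointly continuous and strictly increasing in $s$. Since $\bm{v}$ is a probability vector, it has at least one positive coordinate, so $\Psi(\bm{v},q,\cdot)$ is strictly decreasing. For the limits, set $\beta_{\min}=\min_j\ctr_{j,2}$ and $\beta_{\max}=\max_j\ctr_{j,2}$. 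We have
\[
    \log g_\ell(q,s)\geq \log\Bigl(\min_j p_j^q\Bigr)+s\log(1/\beta_{\max}) \quad\text{for } s\geq 0,
\]
and an analogous upper bound, with $\beta_{\min}$ and $\log\#\mathcal{I}$ added, for $s\leq 0$. Because these bounds are uniform in $\ell$ and the weights $\bm{v}_\ell$ sum to $1$, they give $\Psi\to-\infty$ as $s\to\infty$ and $\Psi\to+\infty$ as $s\to-\infty$. The target value $q\CH{\bm{v}}{\eta(\bm{p})}$ is finite, since $\bm{p}$ is fully supported and so $\eta(\bm{p})$ is too. The intermediate value theorem therefore gives a unique root $s=T_{\bm{p}}(\bm{v},q)$.

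\emph{Step 2 (continuity).} Set $G(\bm{v},q,s)=\Psi(\bm{v},q,s)-q\CH{\bm{v}}{\eta(\bm{p})}$, which is jointly continuous. Fix $(\bm{v}_0,q_0)$ with root $s_0$, and let $\varepsilon>0$. Strict monotonicity gives $G(\bm{v}_0,q_0,s_0-\varepsilon)>0>G(\bm{v}_0,q_0,s_0+\varepsilon)$. By joint continuity, these strict inequalities persist for $(\bm{v},q)$ in a neighbourhood of $(\bm{v}_0,q_0)$. For such $(\bm{v},q)$, monotonicity in $s$ forces the root into $(s_0-\varepsilon,s_0+\varepsilon)$. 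Alternatively, $\partial_s G<0$ everywhere and $G$ is smooth, so the implicit function theorem even gives smoothness in $q$ and in $\bm{v}$, the latter being affine dependence.

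\emph{Step 3 ($q=1$).} At $s=0$ we have $g_\ell(1,0)=\sum_{j\in\eta^{-1}(\ell)}p_j=\eta(\bm{p})_\ell$. Thus
\[
    \Psi(\bm{v},1,0)=-\sum_\ell \bm{v}_\ell\log\eta(\bm{p})_\ell=\CH{\bm{v}}{\eta(\bm{p})},
\]
and uniqueness gives $T_{\bm{p}}(\bm{v},1)=0$. One subtlety is that $\CH{\bm{v}}{\cdot}$ sums only over $\supp\bm{v}$, whereas $\Psi$ sums over all $\ell$. Terms with $\bm{v}_\ell=0$ contribute $0$ in both, so the two expressions agree.

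There is no real obstacle here. The only points needing care are that the limits are uniform in $\ell$, which Step 1 handles because $\bm{v}$ is a probability vector, and the finiteness of the cross entropy, which uses full support of $\bm{p}$.
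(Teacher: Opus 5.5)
Your proof is correct and takes the same route as the paper, which only notes that $\Psi$ is continuous, strictly decreasing in $s$, with limits $\pm\infty$ as $s\to\mp\infty$, and leaves implicit the continuity of the root and the identity $\Psi(\bm{v},1,0)=\CH{\bm{v}}{\eta(\bm{p})}$; your Steps 2 and 3 supply these correctly. One small slip: for $s\leq 0$ the upper bound should again use $\beta_{\max}$ (since $\beta_{j,2}^{-s}\leq\beta_{\max}^{-s}$ when $s\leq 0$), not $\beta_{\min}$, which gives an inequality in the wrong direction, though the corrected bound yields the same limit.
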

Now let $\bm{g}\colon\eta(\mathcal{P})\times\R\to\mathcal{P}$ be the continuous function defined for $\bm{v}\in\eta(\mathcal{P})$ and $q\in\R$ by the rule
\begin{equation*}
    \bm{g}(\bm{v},q)=\left(\bm{v}_{\eta(i)}\frac{p_i^q\ctr_{i,2}^{-T_{\bm{p}}(\bm{v},q)}}{\sum_{j\in\eta^{-1}(\eta(i))}p_j^q\ctr_{j,2}^{-T_{\bm{p}}(\bm{v},q)}}\right)_{i\in\mathcal{I}}.
\end{equation*}
Of course, $\eta(\bm{g}(\bm{v},q))=\bm{v}$.
The point of the choice of the functions $T_{\bm{p}}$ and $\bm{g}$ is the following lemma.
\begin{lemma}\label{l:fibre-lift}
    Let $\bm{p}\in\mathcal{P}$ be strictly positive and $q\in\R$.
    For any $\bm{w}\in\mathcal{P}$ and $\bm{v}\in\eta(\mathcal{P})$ such that $\supp\eta(\bm{w})\subseteq\supp\bm{v}$ and $T_{\bm{p}}(\eta(\bm{w}),q)\geq T_{\bm{p}}(\bm{v},q)$,
    \begin{equation*}
        \frac{q\bigl(\CH{\bm{w}}{\bm{p}}-\CH{\eta(\bm{w})}{\eta(\bm{p})}\bigr)+\CH{\eta(\bm{w})}{\bm{v}}-\CH{\bm{w}}{\bm{g}(\bm{v},q)}}{\chi_2(\bm{w})}\geq T_{\bm{p}}(\bm{v},q)
    \end{equation*}
    with equality if and only if $T_{\bm{p}}(\eta(\bm{w}),q)=T_{\bm{p}}(\bm{v},q)$.
\end{lemma}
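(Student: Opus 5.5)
The proof is a direct computation: I will expand the cross entropy $\CH{\bm{w}}{\bm{g}(\bm{v},q)}$ using the explicit formula for $\bm{g}$, then compare with the defining equation of $T_{\bm{p}}$ using strict monotonicity of $\Psi$ in $s$. Throughout, write $T=T_{\bm{p}}(\bm{v},q)$, $\bm{g}=\bm{g}(\bm{v},q)$, and for each column $\ell\in\eta(\mathcal{I})$ set
\begin{equation*}
    Z_\ell(s)=\sum_{j\in\eta^{-1}(\ell)}p_j^q\ctr_{j,2}^{-s},
\end{equation*}
so that $\Psi(\bm{u},q,s)=-\sum_\ell \bm{u}_\ell\log Z_\ell(s)$.

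\emph{Finiteness.} Since $\bm{p}$ is strictly positive, $g_i>0$ exactly when $\bm{v}_{\eta(i)}>0$. The hypothesis $\supp\eta(\bm{w})\subseteq\supp\bm{v}$ therefore gives $\supp\bm{w}\subseteq\supp\bm{g}$. Hence both $\CH{\eta(\bm{w})}{\bm{v}}$ and $\CH{\bm{w}}{\bm{g}}$ are finite, and all manipulations below are legitimate.

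\emph{Expansion.} For $i\in\supp\bm{w}$,
\begin{equation*}
    \log(1/g_i)=\log(1/\bm{v}_{\eta(i)})+q\log(1/p_i)+T\log\ctr_{i,2}+\log Z_{\eta(i)}(T).
\end{equation*}
Multiply by $w_i$ and sum. Grouping the first and last terms by columns (using $\sum_{i\in\eta^{-1}(\ell)}w_i=\eta(\bm{w})_\ell$) gives
\begin{equation*}
    \CH{\bm{w}}{\bm{g}}=\CH{\eta(\bm{w})}{\bm{v}}+q\CH{\bm{w}}{\bm{p}}-T\chi_2(\bm{w})-\Psi(\eta(\bm{w}),q,T).
\end{equation*}
Substituting this into the numerator $N$ of the left-hand side, the terms $q\CH{\bm{w}}{\bm{p}}$ and $\CH{\eta(\bm{w})}{\bm{v}}$ cancel, leaving
\begin{equation*}
    N=T\chi_2(\bm{w})+\Psi(\eta(\bm{w}),q,T)-q\CH{\eta(\bm{w})}{\eta(\bm{p})}.
\end{equation*}

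\emph{Comparison.} By \cref{l:Tvq-choice},
\begin{equation*}
    q\CH{\eta(\bm{w})}{\eta(\bm{p})}=\Psi\bigl(\eta(\bm{w}),q,T_{\bm{p}}(\eta(\bm{w}),q)\bigr).
\end{equation*}
For fixed $\eta(\bm{w})$, the map $s\mapsto\Psi(\eta(\bm{w}),q,s)$ is strictly decreasing: each $Z_\ell(s)$ is strictly increasing because $\ctr_{j,2}<1$, and the weights $\eta(\bm{w})_\ell$ are nonnegative and not all zero. The hypothesis $T\le T_{\bm{p}}(\eta(\bm{w}),q)$ then gives
\begin{equation*}
    \Psi(\eta(\bm{w}),q,T)\ge\Psi\bigl(\eta(\bm{w}),q,T_{\bm{p}}(\eta(\bm{w}),q)\bigr),
\end{equation*}
with equality if and only if $T=T_{\bm{p}}(\eta(\bm{w}),q)$. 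Thus $N\ge T\chi_2(\bm{w})$ with the same equality case. Dividing by $\chi_2(\bm{w})>0$ gives the claimed inequality and its equality characterization.

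\emph{Main obstacle.} The argument is bookkeeping, so the only real care needed is in the expansion step: regrouping the column-level sums correctly, and keeping track of supports so that no $\infty-\infty$ expressions arise.
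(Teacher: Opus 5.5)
Your proof is correct and is essentially the paper's computation. Both expand $\CH{\bm{w}}{\bm{g}(\bm{v},q)}$ using the explicit formula for $\bm{g}$ to isolate $\Psi\bigl(\eta(\bm{w}),q,T_{\bm{p}}(\bm{v},q)\bigr)$, then compare with $\Psi\bigl(\eta(\bm{w}),q,T_{\bm{p}}(\eta(\bm{w}),q)\bigr)=q\CH{\eta(\bm{w})}{\eta(\bm{p})}$ via strict monotonicity of $\Psi$ in $s$; your support check ruling out $\infty-\infty$ terms is a small detail the paper leaves implicit.
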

\begin{proof}
    We compute that
    \begin{align*}
        \frac{\CH{\eta(\bm{w})}{\bm{v}}-\CH{\bm{w}}{\bm{g}(\bm{v},q)}}{\chi_2(\bm{w})} &= \frac{\sum_{i\in\mathcal{I}}\bm{w}_i\log\left(\frac{p_i^q\ctr_{i,2}^{-T_{\bm{p}}(\bm{v},q)}}{\sum_{j\in\eta^{-1}(\eta(i))}p_j^q\ctr_{j,2}^{-T_{\bm{p}}(\bm{v},q)}}\right)}{\chi_2(\bm{w})}\\
                                                                                         &=T_{\bm{p}}(\bm{v},q)-\frac{q\CH{\bm{w}}{\bm{p}}}{\chi_2(\bm{w})}+\frac{\Psi\bigl(\eta(\bm{w}),q,T_{\bm{p}}(\bm{v},q)\bigr)}{\chi_2(\bm{w})}\\
                                                                                         &\geq T_{\bm{p}}(\bm{v},q)-q\frac{\CH{\bm{w}}{\bm{p}}-\CH{\eta(\bm{w})}{\eta(\bm{p})}}{\chi_2(\bm{w})}.
    \end{align*}
    In the last line, we used \cref{l:Tvq-choice} and the fact that $\Psi$ is strictly decreasing in the third argument.
    In particular, this also implies the equality statement.
\end{proof}
Next, for $\bm{w}\in\mathcal{P}$ and $\bm{v}\in\eta(\mathcal{P})$, write
\begin{equation*}
    A(\bm{w}, \bm{v}) = \frac{H(\bm{w})-H(\bm{v})}{\chi_2(\bm{w})},
    \qquad
    B_{\bm{p}}(\bm{w}, \bm{v}) = \frac{\CH{\bm{w}}{\bm{p}}-\CH{\bm{v}}{\eta(\bm{p})}}{\chi_2(\bm{w})}.
\end{equation*}
For fixed $\bm{v}$, the function $T_{\bm{p}}(\bm{v}, q)$ can also be obtained as an explicit optimization problem.
\begin{corollary}\label{c:col-dual}
    We have
    \begin{equation*}
        T_{\bm{p}}(\bm{v},q)
        =
        \min_{\bm{w}\in\eta^{-1}(\bm{v})}
        \left\{
            q B_{\bm{p}}(\bm{w},\bm{v})-A(\bm{w}, \bm{v})
        \right\}
    \end{equation*}
    and moreover the minimum is attained uniquely at $\bm{g}(\bm{v}, q)$.
\end{corollary}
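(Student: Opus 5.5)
We deduce the formula directly from \cref{l:fibre-lift}, applied with $\bm{w}\in\eta^{-1}(\bm{v})$, so that $\eta(\bm{w})=\bm{v}$. In that case the hypotheses $\supp\eta(\bm{w})\subseteq\supp\bm{v}$ and $T_{\bm{p}}(\eta(\bm{w}),q)\geq T_{\bm{p}}(\bm{v},q)$ hold trivially, with equality in the latter. The lemma therefore gives an \emph{equality} rather than only an inequality; that is not yet the statement we want, so the proof must rewrite the left-hand side in terms of $A$ and $B_{\bm{p}}$.

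\textbf{Step 1: rewriting the left-hand side.} With $\eta(\bm{w})=\bm{v}$, the left-hand side of \cref{l:fibre-lift} equals
\begin{equation*}
    q B_{\bm{p}}(\bm{w},\bm{v})+\frac{\CH{\bm{v}}{\bm{v}}-\CH{\bm{w}}{\bm{g}(\bm{v},q)}}{\chi_2(\bm{w})}.
\end{equation*}
Write $\bm{g}=\bm{g}(\bm{v},q)$. Since $\CH{\bm{v}}{\bm{v}}=H(\bm{v})$ and $\CH{\bm{w}}{\bm{g}}=H(\bm{w})+\DKL{\bm{w}}{\bm{g}}$, this expression becomes
\begin{equation*}
    q B_{\bm{p}}(\bm{w},\bm{v})-A(\bm{w},\bm{v})-\frac{\DKL{\bm{w}}{\bm{g}}}{\chi_2(\bm{w})}.
\end{equation*}

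\textbf{Step 2: concluding.} The lemma's equality case shows that this quantity equals $T_{\bm{p}}(\bm{v},q)$ for every $\bm{w}\in\eta^{-1}(\bm{v})$. Rearranging,
\begin{equation*}
    qB_{\bm{p}}(\bm{w},\bm{v})-A(\bm{w},\bm{v})=T_{\bm{p}}(\bm{v},q)+\frac{\DKL{\bm{w}}{\bm{g}}}{\chi_2(\bm{w})}\geq T_{\bm{p}}(\bm{v},q).
\end{equation*}
The inequality holds because $\chi_2>0$ and $\DKL{\bm{w}}{\bm{g}}\geq 0$. Equality holds if and only if $\DKL{\bm{w}}{\bm{g}}=0$, which happens exactly when $\bm{w}=\bm{g}$. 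Finally, $\bm{g}$ itself lies in $\eta^{-1}(\bm{v})$ because $\eta(\bm{g}(\bm{v},q))=\bm{v}$, so the minimum is attained, and it is attained only at $\bm{g}$.

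\textbf{The delicate point: supports.} This is the only part requiring care. Suppose $\bm{v}$ is not strictly positive. Then $\bm{g}$ vanishes on the columns outside $\supp\bm{v}$, but any $\bm{w}\in\eta^{-1}(\bm{v})$ also vanishes there. Moreover, $\bm{g}$ is strictly positive on every $i$ with $\eta(i)\in\supp\bm{v}$, because $\bm{p}$ is strictly positive. Hence $\supp\bm{w}\subseteq\supp\bm{g}$ always holds. All cross entropies, in particular $\CH{\bm{w}}{\bm{p}}$ and $\CH{\bm{w}}{\bm{g}}$, are then finite, and the identity $\CH{\bm{w}}{\bm{g}}=H(\bm{w})+\DKL{\bm{w}}{\bm{g}}$ used in Step 1 is valid.
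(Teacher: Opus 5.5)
Your proof is correct and matches the paper's argument: you apply \cref{l:fibre-lift} in its equality case with $\eta(\bm{w})=\bm{v}$, rewrite the result as $qB_{\bm{p}}(\bm{w},\bm{v})-A(\bm{w},\bm{v})=T_{\bm{p}}(\bm{v},q)+\DKL{\bm{w}}{\bm{g}(\bm{v},q)}/\chi_2(\bm{w})$, and conclude from nonnegativity of the divergence, with equality exactly when $\bm{w}=\bm{g}(\bm{v},q)$. You also check two points the paper leaves implicit: that $\bm{g}(\bm{v},q)\in\eta^{-1}(\bm{v})$, and that $\supp\bm{w}\subseteq\supp\bm{g}(\bm{v},q)$, so all the cross entropies are finite.
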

\begin{proof}
    Let $\bm{w}\in\eta^{-1}(\bm{v})$.
    By \cref{l:fibre-lift},
    \begin{equation*}
        q B_{\bm{p}}(\bm{w},\bm{v}) - A(\bm{w},\bm{v}) = T_{\bm{p}}(\bm{v}, q) + \frac{\DKL{\bm{w}}{\bm{g}(\bm{v}, q)}}{\chi_2(\bm{w})}.
    \end{equation*}
    To complete the proof, recall that $\DKL{\bm{w}}{\bm{g}(\bm{v}, q)} \geq 0$ with equality if and only if $\bm{w} = \bm{g}(\bm{v}, q)$.
\end{proof}

\subsection{Fine \texorpdfstring{$L^q$}{Lq}-spectra via fibred optimization}
Throughout, we fix a $\bm{p}\in\mathcal{P}$.
In this section, we prove a lower bound on the fine $L^q$-spectrum of the measure $\mu_{\bm{p}}$.
In order to prove this lower bound, we will use \cref{p:pointwise-var}: for each $x\in \Lambda$, we will choose a probability vector depending on the coding of $x$ and apply \cref{e:lld}.
By \cref{l:meas-quasi-compact}, the family of measures $\{\mu_{\bm{w}}:\bm{w}\in\mathcal{P}\}$ has uniform densities, which will give a lower bound on the fine $L^q$-spectrum.

The majority of the work is to choose the probability vector $\bm{w}$.
Unlike in the self-similar case, there is no single choice of a probability vector which works simultaneously for all $x\in\Lambda$.
The heart of the difficulty lies in the fact that the $\mu_{\bm{p}}$-measure of an approximate square $P(\mtt{i},\underline{\mtt{j}})$ depends separately on the coding of $\mtt{i}$ and $\underline{\mtt{j}}$, and the corresponding frequency vectors may not agree.
However, since there are only two relevant vectors, we can choose a sequence of scales along which the column spectrum $T_{\bm{p}}(\bm{v}, q)$ is minimized, and it turns out by \cref{l:fibre-lift} that such a sequence of scales suffices.

Given an infinite word $\gamma\in\mathcal{I}^{\N}$ and a $q\in\R$, let
\begin{equation*}
    \Phi(\gamma,q)=\liminf_{m\to\infty}T_{\bm{p}}(\eta(\bm{\lambda}_m(\gamma)),q).
\end{equation*}
We now obtain our main bound on the fine $L^q$-spectrum.
\begin{theorem}\label{t:lg-fine-lq-bound}
    Let $\bm{p}\in\mathcal{P}$ be strictly positive and let $q\in\R$, $\gamma\in\mathcal{I}^{\N}$, and $\delta>0$ be arbitrary.
    Then there is a fully supported $\bm{v}\in\eta(\mathcal{P})$ so that
    \begin{equation*}
        \limsup_{k\to\infty}\frac{q\log\nu_{\bm{p}}(Q_k(\gamma))-\log\nu_{\bm{g}(\bm{v},q)}(Q_k(\gamma))}{\log \diam\pi(Q_k(\gamma))}\geq\frac{q\CH{\bm{v}}{\eta(\bm{p})}-H(\bm{v})}{\chi_1(\bm{v})}+T_{\bm{p}}(\bm{v},q)-\delta.
    \end{equation*}
\end{theorem}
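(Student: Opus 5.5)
The approach is to compute the quotient exactly with \cref{l:square-measure}, split it into a column term and a fibre term, and choose $\bm{v}$ together with a subsequence of scales $k_j$ so that the column term tends to the first summand on the right and the fibre term is at least $T_{\bm{p}}(\bm{v},q)$ up to a small error.

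\textbf{Step 1: the exact decomposition.} Fix a fully supported $\bm{v}$ and put $\bm{w}=\bm{g}(\bm{v},q)$, so that $\eta(\bm{w})=\bm{v}$. Write $\bm{\lambda}=\bm{\lambda}_k(\gamma)$ and $\bm{v}'=\eta(\bm{\lambda}_{L_k}(\gamma))$. From the product formula in the proof of \cref{l:square-measure},
\begin{equation*}
    -\log\nu_{\bm{w}}(Q_k)=k\bigl(\CH{\bm{\lambda}}{\bm{w}}-\CH{\eta(\bm{\lambda})}{\bm{v}}\bigr)+L_k\CH{\bm{v}'}{\bm{v}},
\end{equation*}
and similarly for $\nu_{\bm{p}}$. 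We also have
\begin{equation*}
    -\log\diam\pi(Q_k)=k\chi_2(\bm{\lambda})+O(1)=L_k\chi_1(\bm{v}')+O(1).
\end{equation*}
Dividing the column parts by $L_k\chi_1(\bm{v}')$ and the fibre parts by $k\chi_2(\bm{\lambda})$, the quotient equals, up to an error tending to $0$ as $k\to\infty$,
\begin{equation*}
    \frac{q\CH{\bm{v}'}{\eta(\bm{p})}-\CH{\bm{v}'}{\bm{v}}}{\chi_1(\bm{v}')}
    +\frac{q\bigl(\CH{\bm{\lambda}}{\bm{p}}-\CH{\eta(\bm{\lambda})}{\eta(\bm{p})}\bigr)+\CH{\eta(\bm{\lambda})}{\bm{v}}-\CH{\bm{\lambda}}{\bm{w}}}{\chi_2(\bm{\lambda})}.
\end{equation*}
This uses that all cross entropies are bounded, since $\bm{p}$ and $\bm{v}$ are fully supported, and that the denominators tend to infinity.

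\textbf{Step 2: bounding the fibre term.} I would follow the computation in the proof of \cref{l:fibre-lift}, but not invoke its hypothesis exactly. That computation shows the fibre term equals
\begin{equation*}
    T_{\bm{p}}(\bm{v},q)+\frac{\Psi\bigl(\eta(\bm{\lambda}),q,T_{\bm{p}}(\bm{v},q)\bigr)-q\CH{\eta(\bm{\lambda})}{\eta(\bm{p})}}{\chi_2(\bm{\lambda})}.
\end{equation*}
The map $s\mapsto\Psi(\bm{u},q,s)$ is decreasing and Lipschitz with constant $\max_i\log(1/\ctr_{i,2})$, uniformly in $\bm{u}$. Hence if $T_{\bm{p}}(\eta(\bm{\lambda}),q)\geq T_{\bm{p}}(\bm{v},q)-\delta'$, then
\begin{equation*}
    \Psi\bigl(\eta(\bm{\lambda}),q,T_{\bm{p}}(\bm{v},q)\bigr)\geq q\CH{\eta(\bm{\lambda})}{\eta(\bm{p})}-C\delta',
\end{equation*}
so the fibre term is at least $T_{\bm{p}}(\bm{v},q)-C'\delta'$. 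Here $\chi_2$ is bounded below, so $C'$ depends only on the IFS.

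\textbf{Step 3: choosing $\bm{v}$ and the scales.} Let $\Phi=\Phi(\gamma,q)$ and pick $m_j\to\infty$ with $T_{\bm{p}}(\eta(\bm{\lambda}_{m_j}),q)\to\Phi$ and $\eta(\bm{\lambda}_{m_j})\to\bm{v}_0$. By continuity (\cref{l:Tvq-choice}), $T_{\bm{p}}(\bm{v}_0,q)=\Phi$, and by the definition of the liminf, $T_{\bm{p}}(\eta(\bm{\lambda}_k),q)\geq\Phi-\delta'$ for all large $k$.

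Next I need each $m_j$ to be (close to) a value of $L_k$. The sequence $L_k$ is nondecreasing with $L_{k+1}-L_k$ bounded by a constant depending only on the contraction ratios, and $L_k\to\infty$. So for each $j$ there is $k_j$ with $|L_{k_j}-m_j|=O(1)$, and then $\eta(\bm{\lambda}_{L_{k_j}})\to\bm{v}_0$ as well.

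Now let $\bm{v}$ be a fully supported vector close to $\bm{v}_0$. Along $k_j$, the column term tends to
\begin{equation*}
    \frac{q\CH{\bm{v}_0}{\eta(\bm{p})}-\CH{\bm{v}_0}{\bm{v}}}{\chi_1(\bm{v}_0)},
\end{equation*}
which is close to $\bigl(q\CH{\bm{v}}{\eta(\bm{p})}-H(\bm{v})\bigr)/\chi_1(\bm{v})$ once $\bm{v}$ is near $\bm{v}_0$. By continuity, $T_{\bm{p}}(\bm{v},q)$ is within $\delta'$ of $\Phi$, so Step 2 applies with $2\delta'$ for all large $k$. Taking the limsup along $k_j$ and choosing $\delta'$ and $\bm{v}$ appropriately gives the claimed inequality with error $\delta$.

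\textbf{Main obstacle.} The delicate point is synchronising the two frequency vectors. The column term is governed by $\bm{\lambda}_{L_k}$ and the fibre term by $\bm{\lambda}_k$. The fibre bound must hold for all $k$, which is exactly why $\bm{v}$ is chosen to nearly minimise $T_{\bm{p}}$ in the liminf sense. The column term only needs to be good along the subsequence, which relies on the bounded gaps of $L_k$. A secondary technical point is the continuity of $\CH{\cdot}{\bm{v}}$ near a $\bm{v}_0$ that may fail to be fully supported; this is handled by perturbing $\bm{v}_0$ to a fully supported $\bm{v}$ before passing to the limit.
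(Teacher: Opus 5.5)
Your proposal is correct and follows the same skeleton as the paper's proof. You split the quotient via \cref{l:square-measure} into a column term and a fibre term, take $\bm{v}$ to be a limit of column frequencies $\eta(\bm{\lambda}_{L_{k}})$ realising the liminf $\Phi(\gamma,q)$, perturb it to a fully supported vector, and control the fibre term with the computation behind \cref{l:fibre-lift}.

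The differences are technical. The paper passes to a further subsequence with $\bm{\lambda}_{k_m}\to\bm{w}$ and applies \cref{l:fibre-lift} at that limit, which needs $T_{\bm{p}}(\eta(\bm{w}),q)\geq T_{\bm{p}}(\bm{v},q)$ and $\supp\eta(\bm{w})\subseteq\supp\bm{v}$. Your Lipschitz bound on $s\mapsto\Psi(\cdot,q,s)$ instead gives the fibre estimate uniformly at all large scales and avoids the support condition. You also make explicit, via the bounded gaps of $L_k$ and \cref{l:cont}, why the liminf can be realised along scales of the form $L_k$; the paper's proof leaves this point implicit.
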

\begin{proof}
    For notational clarity, we suppress dependence of our notation on $\gamma$.
    Let $(k_m)_{m=1}^\infty$, $\bm{w}\in\mathcal{P}$, and $\bm{v}\in\eta(\mathcal{P})$ be chosen so that:
    \begin{itemize}[nl]
        \item $\Phi(\gamma,q)=T_{\bm{p}}(\bm{v},q)$,
        \item $\bm{v}=\lim_{m\to\infty}\eta(\bm{\lambda}_{L_{k_m}})$, and
        \item $\bm{w}=\lim_{m\to\infty}\bm{\lambda}_{k_m}$.
    \end{itemize}
    Since $L_k=O(k)$ and $k\eta(\bm{\lambda}_k)\leq L_k\eta(\bm{\lambda}_{L_k})$ coordinatewise, we have $\supp\eta(\bm{w})\subseteq\supp\bm{v}$.
    For $\varepsilon\in(0,1)$, set
    \begin{equation*}
        \bm{v}_\varepsilon=(1-\varepsilon)\bm{v}+\varepsilon\eta(\bm{p}),
    \end{equation*}
    so that $\bm{g}(\bm{v}_\varepsilon,q)$ is fully supported.
    Since $T_{\bm{p}}(\eta(\bm{w}),q)\geq T_{\bm{p}}(\bm{v},q)$, \cref{l:square-measure} and \cref{l:fibre-lift} give
    \begin{align*}
        \lim_{\varepsilon\to 0}\lim_{m\to\infty}
        &\frac{q\log\nu_{\bm{p}}(Q_{k_m})-\log\nu_{\bm{g}(\bm{v}_\varepsilon,q)}(Q_{k_m})}{\log\diam\pi(Q_{k_m})}\\
        ={}&\frac{q\CH{\bm{v}}{\eta(\bm{p})}-H(\bm{v})}{\chi_1(\bm{v})}\\
        &+\frac{q\bigl(\CH{\bm{w}}{\bm{p}}-\CH{\eta(\bm{w})}{\eta(\bm{p})}\bigr)+\CH{\eta(\bm{w})}{\bm{v}}-\CH{\bm{w}}{\bm{g}(\bm{v},q)}}{\chi_2(\bm{w})}\\
        \geq{}&\frac{q\CH{\bm{v}}{\eta(\bm{p})}-H(\bm{v})}{\chi_1(\bm{v})}+T_{\bm{p}}(\bm{v},q).
    \end{align*}
    By continuity of the final expression in $\bm{v}$, taking $\varepsilon>0$ sufficiently small gives the required inequality with $\bm{v}_\varepsilon$ in place of $\bm{v}$.
\end{proof}
The lower bound for the fine $L^q$ spectrum in \cref{it:main} now follows.
\begin{corollary}\label{c:lq-lower}
    Let $\bm{p}\in\mathcal{P}$ be fixed and $q\in\R$.
    Then
    \begin{align*}
        \vartheta_{\bm{p}}(q)&\geq\min_{\bm{w}\in\mathcal{P}}\left\{\frac{q \CH{\bm{w}}{\bm{p}}-H(\bm{w})}{\chi_2(\bm{w})}+\left(1-\frac{\chi_1(\bm{w})}{\chi_2(\bm{w})}\right)\frac{q\CH{\eta(\bm{w})}{\eta(\bm{p})}-H(\eta(\bm{w}))}{\chi_1(\eta(\bm{w}))}\right\}\\
                             &=\min_{\bm{v}\in\eta(\mathcal{P})}\left\{\frac{q\CH{\bm{v}}{\eta(\bm{p})}-H(\bm{v})}{\chi_1(\bm{v})}+T_{\bm{p}}(\bm{v},q)\right\}.
    \end{align*}
\end{corollary}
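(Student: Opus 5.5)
The plan is to combine \cref{t:lg-fine-lq-bound} with the pointwise density criterion \cref{p:pointwise-var}, applied with $\lambda=\mu_{\bm{p}}$, $E=\Lambda$ and $\Delta=\{\mu_{\bm{w}}:\bm{w}\in\mathcal{P}\}$; this family has uniform densities by \cref{l:meas-quasi-compact}. Write
\begin{equation*}
    M=\min_{\bm{v}\in\eta(\mathcal{P})}\Bigl\{\frac{q\CH{\bm{v}}{\eta(\bm{p})}-H(\bm{v})}{\chi_1(\bm{v})}+T_{\bm{p}}(\bm{v},q)\Bigr\}.
\end{equation*}
Since $\vartheta_{\bm{p}}(q)=-b_{\mu_{\bm{p}}}(\Lambda,q)$, it suffices to show $b_{\mu_{\bm{p}}}(\Lambda,q)\leq -M+\delta$ for every $\delta>0$. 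The equality of the two minima I treat separately at the end, as pure algebra.

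Fix $x\in\Lambda$. Under separation of principal projections the coding map $\pi$ is injective, so $x=\pi(\gamma)$ for a unique $\gamma$. Apply \cref{t:lg-fine-lq-bound} to this $\gamma$ to get a fully supported $\bm{v}$ and a subsequence $k_m$ along which
\begin{equation*}
    \frac{\log\nu_{\bm{g}(\bm{v},q)}(Q_{k_m})-q\log\nu_{\bm{p}}(Q_{k_m})}{\log\diam\pi(Q_{k_m})}\leq -M+\delta+o(1),
\end{equation*}
after changing sign and bounding the right-hand side of the theorem below by its minimum over $\bm{v}$. Because a $\liminf$ is involved, we only need to control balls at the radii $r_{k_m}$, not at all radii.

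The main obstacle is converting approximate-square measures into ball measures at the \emph{same} radius, for both $\mu_{\bm{p}}$ and $\mu_{\bm{g}}$, with either sign of $q$. I would use \cref{l:spsc-reduction}, which gives $B(x,cr_k)\cap\Lambda\subset\pi(Q_k)\subset B(x,Cr_k)$. There is a bounded $N$, depending only on the IFS, with $B(x,Cr_k)\cap\Lambda\subset\pi(Q_{k-N})$. Passing from $Q_k$ to $Q_{k-N}$ changes $k$ by $N$ and $L_k$ by a bounded amount, so
\begin{equation*}
    \nu_{\bm{w}}(Q_k)\leq \mu_{\bm{w}}\bigl(B(x,r)\bigr)\leq K\,\nu_{\bm{w}}(Q_k)
\end{equation*}
for $r$ comparable to $r_k$. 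Here $K$ depends on the minimal positive entry of $\bm{w}$, which is fine since $\bm{g}(\bm{v},q)$ is fixed for the given $x$. Bounded multiplicative constants and bounded rescalings of the radius disappear after dividing by $\log r\to-\infty$, whatever the sign of $q$. Hence the hypothesis \cref{e:lld} holds at every $x\in\Lambda$ with $t=-M+\delta$, taking $\mu=\mu_{\bm{g}(\bm{v},q)}$. \Cref{p:pointwise-var} then gives $b_{\mu_{\bm{p}}}(\Lambda,q)\leq -M+\delta$, and letting $\delta\to0$ yields $\vartheta_{\bm{p}}(q)\geq M$.

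For the equality of the two minima, I first expand the expression in the first line. With $\bm{v}=\eta(\bm{w})$ it equals
\begin{equation*}
    \frac{q\CH{\bm{v}}{\eta(\bm{p})}-H(\bm{v})}{\chi_1(\bm{v})}+q B_{\bm{p}}(\bm{w},\bm{v})-A(\bm{w},\bm{v}).
\end{equation*}
This follows because
\begin{equation*}
    \Bigl(1-\frac{\chi_1}{\chi_2}\Bigr)\frac{X}{\chi_1}=\frac{X}{\chi_1}-\frac{X}{\chi_2}
\end{equation*}
with $X=q\CH{\bm{v}}{\eta(\bm{p})}-H(\bm{v})$, and the $-X/\chi_2$ term combines with $(q\CH{\bm{w}}{\bm{p}}-H(\bm{w}))/\chi_2$ to give exactly $qB_{\bm{p}}-A$. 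Now minimize over $\bm{w}$ in two stages: first over each fibre $\eta^{-1}(\bm{v})$, then over $\bm{v}$. By \cref{c:col-dual}, the minimum over the fibre of $qB_{\bm{p}}-A$ is exactly $T_{\bm{p}}(\bm{v},q)$. Minimizing the result over $\bm{v}$ gives the second line.
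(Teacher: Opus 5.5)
Your proposal is correct and matches the paper's proof. The paper also gets the equality of the two minima from \cref{c:col-dual}, and gets the lower bound by feeding the vector from \cref{t:lg-fine-lq-bound} into \cref{p:pointwise-var} with the family from \cref{l:meas-quasi-compact}, using \cref{l:spsc-reduction} to pass from approximate squares to balls. Your shift from $Q_k$ to $Q_{k-N}$, which gives two-sided bounds at the single radius $Cr_k$ for either sign of $q$, usefully spells out the step the paper compresses into ``by \cref{l:spsc-reduction}''.
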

\begin{proof}
    That the two minima are equal follows from \cref{c:col-dual}; denote the common value by $s$.
    Fix $\delta>0$ and, for each $x=\pi(\gamma)$, choose a fully supported $\bm{v}$ as in \cref{t:lg-fine-lq-bound}.
    By \cref{l:spsc-reduction},
    \begin{equation*}
        \liminf_{r\to0}\frac{\log\mu_{\bm{g}(\bm{v},q)}\bigl(B(x,r)\bigr)-q\log\mu_{\bm{p}}\bigl(B(x,r)\bigr)}{\log r}\leq -s+\delta.
    \end{equation*}
    Apply \cref{p:pointwise-var} with $\lambda=\mu_{\bm{p}}$, $E=\supp\mu_{\bm{p}}$, $t=-s+\delta$, and the family $\Delta=\{\mu_{\bm{z}}:\bm{z}\in\mathcal{P}\}$, which has uniform densities by \cref{l:meas-quasi-compact}.
    Thus $\vartheta_{\bm{p}}(q)\geq s-\delta$.
    Since $\delta>0$ was arbitrary, $\vartheta_{\bm{p}}(q) \geq s$ as claimed.
\end{proof}

\subsection{A lower bound for the multifractal spectrum}
We now turn our attention to the multifractal spectrum.
As usual, we can obtain a simple lower bound for the multifractal spectrum as a constrained optimization problem.
Recall the definitions of $v$ and $u_{\bm{p}}$ from \cref{e:ly-formulas}.
The following lower bound on the multifractal spectrum is standard.
\begin{proposition}\label{p:mf-lower}
    Let $\mu_{\bm{p}}$ be a Gatzouras--Lalley measure with separation of principal projections, and let $\alpha \geq 0$.
    Then
    \begin{equation*}
        f_{\bm{p}}(\alpha) \geq F_{\bm{p}}(\alpha) \coloneqq \max_{\bm{w}\in\mathcal{P}}\left\{v(\bm{w}):u_{\bm{p}}(\bm{w}) = \alpha\right\}.
    \end{equation*}
\end{proposition}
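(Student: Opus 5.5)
The plan is the standard mass-distribution argument. Fix $\alpha$ with $F_{\bm{p}}(\alpha)>-\infty$, and pick a maximiser $\bm{w}\in\mathcal{P}$ with $u_{\bm{p}}(\bm{w})=\alpha$ and $v(\bm{w})=F_{\bm{p}}(\alpha)$. (The maximum is attained since $u_{\bm{p}},v$ are continuous on the compact set $\{\bm{w}:u_{\bm{p}}(\bm{w})=\alpha\}$, noting $\bm{p}$ is strictly positive so $u_{\bm{p}}$ is finite.) Consider the Bernoulli measure $\nu_{\bm{w}}$ and its projection $\mu_{\bm{w}}$, and the set
\begin{equation*}
    G=\pi\bigl(\{\gamma\in\mathcal{I}^{\N}:\bm{\lambda}_k(\gamma)\to\bm{w}\}\bigr)=\pi(\Omega_{\bm{w}}).
\end{equation*}
By the strong law of large numbers, $\nu_{\bm{w}}(\Omega_{\bm{w}})=1$, so $\mu_{\bm{w}}(G)=1$. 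The goal is to show two things. First, every $x\in G$ has $\dim_{\loc}(\mu_{\bm{p}},x)=\alpha$. Second, $\dim_{\loc}(\mu_{\bm{w}},x)=v(\bm{w})$ for all $x\in G$. Then the mass distribution principle (Billingsley's lemma) gives $\dimH G\ge v(\bm{w})$, so $f_{\bm{p}}(\alpha)\ge v(\bm{w})=F_{\bm{p}}(\alpha)$.

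Both local dimension computations go through approximate squares. Take $\gamma\in\Omega_{\bm{w}}$. We have $L_k(\gamma)\to\infty$ with $L_k=O(k)$, and also $k=O(L_k)$, because $\ctr_{\cdot,1}$ and $\ctr_{\cdot,2}$ are bounded away from $0$ and $1$. Hence $\bm{\lambda}_k\to\bm{w}$ and $\bm{\lambda}_{L_k}\to\bm{w}$, so $\eta(\bm{\lambda}_{L_k})\to\eta(\bm{w})$. Now apply \cref{l:square-measure} with $\bm{p}$ in place of $\bm{w}$. Since $c$ is bounded and $\log\diam\pi(Q_k)\to-\infty$, we get
\begin{equation*}
    \lim_{k\to\infty}\frac{\log\nu_{\bm{p}}(Q_k(\gamma))}{\log\diam\pi(Q_k(\gamma))}=u_{\bm{p}}(\bm{w})=\alpha,
\end{equation*}
using continuity of cross entropy in the first argument (finite because $\bm{p}>0$). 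Doing the same with $\bm{w}$ as the measure gives the limit $v(\bm{w})$. One subtlety: the cross entropy $\CH{\bm{\lambda}_k}{\bm{w}}$ stays finite because $\gamma$ only uses symbols in $\supp\bm{w}$. So $\gamma$ should be restricted to $(\supp\bm{w})^{\N}\cap\Omega_{\bm{w}}$, which still has full $\nu_{\bm{w}}$-measure.

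To pass from squares to balls, use \cref{l:spsc-reduction}: $\mu(B(x,cr_k))\le\nu(Q_k(\gamma))\le\mu(B(x,Cr_k))$ for both measures, where $r_k=\ctr_{\gamma\npre{k},2}$. Consecutive radii satisfy $r_{k+1}/r_k\ge\min_i\ctr_{i,2}>0$, so every small $r$ lies between comparable $r_k$'s. Monotonicity of $r\mapsto\mu(B(x,r))$ then shows the ball limit exists and equals the square limit. This requires separation of principal projections for $\mu_{\bm{p}}$ balls. (For the pushforward of $\nu_{\bm{p}}$ we need $\mu_{\bm{p}}(\pi(Q))=\nu_{\bm{p}}(Q)$, which also follows from the separation.)

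I expect the main obstacle to be this square-to-ball transfer combined with the fact that the local dimension must \emph{exist}, not merely along the subsequence $r_k$. This is exactly where the sandwiching from \cref{l:spsc-reduction} and the bounded ratio $r_{k+1}/r_k$ are used. The case $\alpha$ with no admissible $\bm{w}$ is trivial, since then $F_{\bm{p}}(\alpha)=-\infty$.
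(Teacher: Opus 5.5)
Your proposal is correct and follows the paper's proof. Both take $\nu_{\bm{w}}$-typical codings (empirical frequencies converging to $\bm{w}$), compute the local dimensions of $\mu_{\bm{p}}$ and $\mu_{\bm{w}}$ there via \cref{l:square-measure} and \cref{l:spsc-reduction}, and conclude by the mass distribution principle. Your added details — restricting to $(\supp\bm{w})^{\N}$ so that the cross entropies stay finite, using the bounded ratio $r_{k+1}/r_k$ to get existence of the full limit, and using injectivity of $\pi$ from the separation — are exactly what the paper's terse argument leaves implicit.
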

\begin{proof}
    Fix $\bm{w}\in\mathcal{P}$ with $u_{\bm{p}}(\bm{w})=\alpha$.
    For $\nu_{\bm{w}}$-a.e.\ $\gamma$, the empirical frequencies $\bm{\lambda}_k(\gamma)$ converge to $\bm{w}$.
    Thus \cref{l:square-measure,l:spsc-reduction} give
    \begin{equation*}
        \dim_{\loc}(\mu_{\bm{p}},\pi(\gamma))=u_{\bm{p}}(\bm{w})=\alpha,
        \qquad
        \dim_{\loc}(\mu_{\bm{w}},\pi(\gamma))=v(\bm{w}).
    \end{equation*}
    Taking the maximum over $\bm{w}$ proves the claim.
\end{proof}
Crucially, this simple lower bound is already sufficient to give our exact formula for $\vartheta_{\bm{p}}$.
Essentially, this is because of duality.
Recall that we proved in \cref{c:lq-lower} that
\begin{equation*}
    \vartheta_{\bm{p}}(q) \geq V_{\bm{p}}(q)\coloneqq \min_{\bm{w}\in\mathcal{P}}\left\{q u_{\bm{p}}(\bm{w}) - v(\bm{w})\right\}.
\end{equation*}
The optimization problems defining $V_{\bm{p}}$ and $F_{\bm{p}}$ are dual in the sense of \cref{ss:optimisation}.
In particular, we know in general that $V_{\bm{p}}$ is a concave function of $q$ with $F_{\bm{p}}^*(q) = V_{\bm{p}}(q)$.

However, in general, $F_{\bm{p}}(\alpha)$ need not be concave, so it can happen that $F(\alpha) < V_{\bm{p}}^*(\alpha)$; see \cref{c:excp}.
Regardless, using the general inequalities along with the order-reversing property of concave conjugation, we can obtain our variational formula for $\vartheta_{\bm{p}}$ and complete the proof of \cref{it:main}.
\begin{theorem}
    Let $\bm{p}\in\mathcal{P}$ be fixed and $q\in\R$.
    Then
    \begin{equation*}
        \vartheta_{\bm{p}}(q) = V_{\bm{p}}(q) =\min_{\bm{v}\in\eta(\mathcal{P})}\Bigl(\frac{q\CH{\bm{v}}{\eta(\bm{p})}-H(\bm{v})}{\chi_1(\bm{v})}+T_{\bm{p}}(\bm{v},q)\Bigr).
    \end{equation*}
\end{theorem}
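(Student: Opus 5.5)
The plan is to sandwich $\vartheta_{\bm{p}}$ between $V_{\bm{p}}$ and itself by concave conjugation. The lower bound $\vartheta_{\bm{p}}(q)\geq V_{\bm{p}}(q)$ is exactly \cref{c:lq-lower}, once we check that the integrand there equals $q u_{\bm{p}}(\bm{w})-v(\bm{w})$. Expanding \cref{e:ly-formulas}, the $\eta$-terms carry weight $\frac{1}{\chi_1}-\frac{1}{\chi_2}=\frac{1}{\chi_1}\bigl(1-\frac{\chi_1}{\chi_2}\bigr)$, so the two expressions agree. The same corollary already gives the equality of the two minima: the fibre minimisation of \cref{c:col-dual} turns the minimum over $\bm{w}$ into the minimum over $\bm{v}$. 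So only the upper bound $\vartheta_{\bm{p}}(q)\leq V_{\bm{p}}(q)$ remains.

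For the upper bound I would work through the multifractal spectrum and duality. By \cref{p:spec-upper} and \cref{p:mf-lower}, for every $\alpha$ we have $F_{\bm{p}}(\alpha)\leq f_{\bm{p}}(\alpha)\leq\vartheta_{\bm{p}}^*(\alpha)$. Concave conjugation is order-reversing, so $\vartheta_{\bm{p}}^{**}\leq F_{\bm{p}}^*$. The key identity is $F_{\bm{p}}^*=V_{\bm{p}}$. The inequality $F_{\bm{p}}^*(q)\geq V_{\bm{p}}(q)$ holds because $F_{\bm{p}}(\alpha)+V_{\bm{p}}(q)\leq q\alpha$ for all $\alpha,q$. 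For the reverse inequality, take a minimiser $\bm{w}\in M(q)$ and set $\alpha=u_{\bm{p}}(\bm{w})$. Then $F_{\bm{p}}(\alpha)\geq v(\bm{w})$, so $F_{\bm{p}}^*(q)\leq q\alpha-v(\bm{w})=V_{\bm{p}}(q)$. Combining, we get $\vartheta_{\bm{p}}^{**}\leq V_{\bm{p}}$.

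The obstacle is that $\vartheta_\mu$ need not be concave in general, so $\vartheta_{\bm{p}}^{**}$ (the concave hull) dominates $\vartheta_{\bm{p}}$ and we cannot simply replace one by the other. I expect to handle this with the lower bound rather than by proving concavity directly. Since $V_{\bm{p}}$ is concave and $\vartheta_{\bm{p}}\geq V_{\bm{p}}$, and $\vartheta_{\bm{p}}^{**}$ is the smallest concave function (upper semicontinuous where finite) dominating $\vartheta_{\bm{p}}$, this gives $\vartheta_{\bm{p}}\leq\vartheta_{\bm{p}}^{**}\leq V_{\bm{p}}\leq\vartheta_{\bm{p}}$. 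The inequality $\vartheta\leq\vartheta^{**}$ is automatic for the concave hull, so we need no concavity of $\vartheta_{\bm{p}}$ at all.

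I would still check that nothing degenerates. The function $\vartheta_{\bm{p}}^*$ must not be identically $-\infty$ or $+\infty$, and $\alpha$ ranges over all of $\R$. Also, \cref{p:mf-lower} is stated only for $\alpha\geq0$; this is harmless, since $F_{\bm{p}}(\alpha)=-\infty$ for $\alpha<0$ because $u_{\bm{p}}\geq0$. With these checks, equality holds throughout, which proves the theorem, and as a byproduct $\vartheta_{\bm{p}}$ is concave.
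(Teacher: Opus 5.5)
Your proposal is correct and essentially the same as the paper's proof: the lower bound comes from \cref{c:lq-lower}, and conjugating $F_{\bm{p}}\leq f_{\bm{p}}\leq\vartheta_{\bm{p}}^*$ gives the chain $\vartheta_{\bm{p}}\leq\vartheta_{\bm{p}}^{**}\leq F_{\bm{p}}^*=V_{\bm{p}}\leq\vartheta_{\bm{p}}$. Your explicit verification of $F_{\bm{p}}^*=V_{\bm{p}}$ and your handling of $\alpha<0$ via $u_{\bm{p}}\geq 0$ fill in details the paper leaves implicit.
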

\begin{proof}
    The second equality and the inequality $\vartheta_{\bm{p}}(q) \geq V_{\bm{p}}(q)$ are precisely \cref{c:lq-lower}.
    Also, recall \cref{p:mf-lower} and \cref{p:spec-upper}:
    \begin{equation}\label{e:gen}
        F_{\bm{p}}(\alpha) \leq f_{\bm{p}}(\alpha) \leq \vartheta_{\bm{p}}^*(\alpha).
    \end{equation}
    Taking concave conjugates in \cref{e:gen} and recalling that concave conjugates reverse order,
    \begin{equation}\label{e:upper}
        \vartheta_{\bm{p}}^{**}(q) \geq \vartheta_{\bm{p}}(q) \geq V_{\bm{p}}(q) = F_{\bm{p}}^*(q) \geq f_{\bm{p}}^*(q) \geq\vartheta_{\bm{p}}^{**}(q).
    \end{equation}
    Therefore equality holds everywhere, as claimed.
\end{proof}
\begin{remark}
    The first inequality in \cref{e:upper} was required since \emph{a priori} $\vartheta_{\bm{p}}$ need not be concave; see \cite[Section~3]{zbl:0841.28012}.
    Regardless, we recall that $g^{**}$ is the concave hull of $g$ for general functions $g$, so it always holds that $g^{**} \geq g$ which turns out to be sufficient.
\end{remark}
To conclude this section, let us also point out that $F_{\bm{p}}$ can be written as an optimization over $\eta(\mathcal{P})$.
Let $T_{\bm{p}}^*(\bm{v},\rho)$ denote the concave conjugate of $T_{\bm{p}}(\bm{v}, q)$ in the $q$-variable.
The parameter $\rho$ will be related to $\alpha$ by the following coordinate change:
\begin{equation*}
    \rho(\alpha, \bm{v}) = \alpha - \frac{\CH{\bm{v}}{\eta(\bm{p})}}{\chi_1(\bm{v})}.
\end{equation*}
Finally, recall from \cref{c:col-dual} that
\begin{equation*}
    T_{\bm{p}}^*(\bm{v},\rho) = \max_{\bm{w}\in \eta^{-1}(\bm{v})}
        \left\{
            A(\bm{w}, \bm{v}): B_{\bm{p}}(\bm{w},\bm{v}) = \rho.
        \right\}
\end{equation*}
We now have the following.
\begin{proposition}
    We have
    \begin{equation*}
        F_{\bm{p}}(\alpha)
        =
        \max_{\bm{v}\in\eta(\mathcal{P})}
        \left\{
            \frac{H(\bm{v})}{\chi_1(\bm{v})}
            +
            T^*_{\bm{p}}\left(\bm{v}, \alpha-\frac{\CH{\bm{v}}{\eta(\bm{p})}}{\chi_1(\bm{v})}\right)
        \right\}.
    \end{equation*}
\end{proposition}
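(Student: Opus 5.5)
The plan is to split the maximisation defining $F_{\bm{p}}(\alpha)$ according to the column marginal $\bm{v}=\eta(\bm{w})$. Then, inside each fibre $\eta^{-1}(\bm{v})$, we recognise the constrained problem defining $T^*_{\bm{p}}(\bm{v},\rho)$.

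First, fix $\bm{w}\in\mathcal{P}$ and write $\bm{v}=\eta(\bm{w})$. Directly from \cref{e:ly-formulas} and the definitions of $A$ and $B_{\bm{p}}$,
\begin{equation*}
    v(\bm{w}) = \frac{H(\bm{v})}{\chi_1(\bm{v})} + A(\bm{w},\bm{v}),
    \qquad
    u_{\bm{p}}(\bm{w}) = \frac{\CH{\bm{v}}{\eta(\bm{p})}}{\chi_1(\bm{v})} + B_{\bm{p}}(\bm{w},\bm{v}).
\end{equation*}
Hence the constraint $u_{\bm{p}}(\bm{w})=\alpha$ is equivalent to $B_{\bm{p}}(\bm{w},\bm{v})=\rho(\alpha,\bm{v})$. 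Since $\mathcal{P}=\bigsqcup_{\bm{v}\in\eta(\mathcal{P})}\eta^{-1}(\bm{v})$, we obtain
\begin{equation*}
    F_{\bm{p}}(\alpha)=\sup_{\bm{v}\in\eta(\mathcal{P})}\Bigl\{\frac{H(\bm{v})}{\chi_1(\bm{v})}+\max_{\bm{w}\in\eta^{-1}(\bm{v})}\bigl\{A(\bm{w},\bm{v}):B_{\bm{p}}(\bm{w},\bm{v})=\rho(\alpha,\bm{v})\bigr\}\Bigr\},
\end{equation*}
with the convention that the maximum over the empty set is $-\infty$.

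It remains to identify the inner maximum with $T^*_{\bm{p}}(\bm{v},\rho)$. The fibre $\eta^{-1}(\bm{v})$ is compact and convex, and $A(\cdot,\bm{v})$ and $B_{\bm{p}}(\cdot,\bm{v})$ are continuous on it. By \cref{c:col-dual}, the unconstrained dual of this constrained problem is exactly $q\mapsto T_{\bm{p}}(\bm{v},q)$. So \cref{l:duality} gives only the inequality ``inner max $\le T^*_{\bm{p}}$''. To get equality for every $\rho$, I would use that \cref{c:col-dual} gives a \emph{unique} minimiser $\bm{g}(\bm{v},q)$ for each $q$. Then \cref{l:duality}\cref{im:sing} shows that $T_{\bm{p}}(\bm{v},\cdot)$ is differentiable everywhere, and \cref{l:duality}\cref{im:conn} gives equality at every $\rho$ of the form $\partial_q T_{\bm{p}}(\bm{v},q)$. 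The derivative is continuous by concavity and differentiability, so these values fill an interval. At the endpoints, and outside this interval, I would argue by the remark after \cref{ic:diff}: use compactness and the limits $q\to\pm\infty$ to show that the range of $B_{\bm{p}}(\cdot,\bm{v})$ on the fibre coincides with the closure of the set of derivatives. Outside that range both sides equal $-\infty$. Alternatively, the inner problem is concave in disguise: $A$ is a ratio of a concave function (entropy restricted to the fibre) by an affine function, and $B_{\bm{p}}$ is a ratio of affine functions. One could therefore show directly that the constrained maximum is concave in $\rho$, which gives equality with its biconjugate $T^*_{\bm{p}}$. I expect this endpoint and range bookkeeping to be the main obstacle, although it is routine.

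Finally, the outer supremum must be shown to be a maximum. The original maximum over $\mathcal{P}$ is attained because $u_{\bm{p}}$ and $v$ are continuous on the compact set $\mathcal{P}$ (when the constraint set is nonempty). A maximiser $\bm{w}_0$ then yields $\bm{v}_0=\eta(\bm{w}_0)$ attaining the outer supremum, since the decomposition above is an exact identity. If no $\bm{w}$ satisfies the constraint, every inner problem is empty and both sides equal $-\infty$. Some care is needed on the faces of $\eta(\mathcal{P})$ where $\bm{v}$ is not fully supported: there, $\eta^{-1}(\bm{v})$ is a product of lower-dimensional simplices, but \cref{c:col-dual} still applies, with the columns outside $\supp\bm{v}$ contributing nothing.
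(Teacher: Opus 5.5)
Your proposal is correct and follows the same route as the paper. You split the maximum over $\mathcal{P}$ along the fibres $\eta^{-1}(\bm{v})$, rewrite $v$ and $u_{\bm{p}}$ via $A$ and $B_{\bm{p}}$ so that the constraint becomes $B_{\bm{p}}(\bm{w},\bm{v})=\rho(\alpha,\bm{v})$, and identify the inner constrained maximum with $T^*_{\bm{p}}(\bm{v},\rho)$ using \cref{c:col-dual}. The paper simply asserts that last identity, whereas you justify it: uniqueness of $\bm{g}(\bm{v},q)$ gives differentiability via \cref{l:duality}, a compactness and limit argument handles the endpoints of the range of $B_{\bm{p}}(\cdot,\bm{v})$, and both sides are $-\infty$ outside that range. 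This supplies detail the paper leaves implicit.
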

\begin{proof}
    We compute
    \begin{align*}
        F_{\bm{p}}(\alpha) &= \max_{\bm{w}\in\mathcal{P}} \left\{v(\bm{w}):u_{\bm{p}}(\bm{w}) = \alpha\right\}\\
                           &= \max_{\bm{v}\in\eta(\mathcal{P})}\max_{\bm{w}\in\eta^{-1}(\bm{v})}\left\{A(\bm{w}, \bm{v}) +\frac{H(\bm{v})}{\chi_1(\bm{v})} :B_{\bm{p}}(\bm{w}, \bm{v}) = \alpha -\frac{\CH{\bm{v}}{\eta(\bm{p})}}{\chi_1(\bm{v})} \right\}\\
                           &= \max_{\bm{v}\in\eta(\mathcal{P})}\left(\frac{H(\bm{v})}{\chi_1(\bm{v})} + \max_{\bm{w}\in\eta^{-1}(\bm{v})}\left\{A(\bm{w}, \bm{v}) :B_{\bm{p}}(\bm{w}, \bm{v}) = \rho(\alpha, \bm{v}) \right\}\right)\\
                           &=
        \max_{\bm{v}\in\eta(\mathcal{P})}
        \left\{
            \frac{H(\bm{v})}{\chi_1(\bm{v})}
            +
            T^*_{\bm{p}}\left(\bm{v}, \rho(\alpha, \bm{v})\right)
        \right\}
    \end{align*}
    as claimed.
\end{proof}

\subsection{Optimizing with constrained eccentricity}\label{ss:const}
In this section, we note an additional result which may help to clarify why Gatzouras--Lalley carpets behave differently than Bedford--McMullen carpets.
This result will also be relevant in \cref{s:two-col}.

For $\bm{w}\in\mathcal{P}$, write
\begin{equation*}
    \Gamma(\bm{w}) = \frac{\chi_1(\bm{w})}{\chi_2(\bm{w})}.
\end{equation*}
Note that $\Gamma$ takes values in a compact interval
\begin{equation*}
    \Gamma(\mathcal{P}) = [\kappa_{\min},\kappa_{\max}]\subset (0,1)
\end{equation*}
where
\begin{equation*}
    \kappa_{\min}\coloneqq \min_{i\in\mathcal{I}}\frac{\log\beta_{i,1}}{\log\beta_{i,2}}\qquad\text{and} \qquad\kappa_{\max}\coloneqq \max_{i\in\mathcal{I}}\frac{\log\beta_{i,1}}{\log\beta_{i,2}}.
\end{equation*}
We say that the IFS and its associated carpet are \emph{homogeneous} if $\kappa_{\min} = \kappa_{\max}$.

We constrain the optimization problems defining $F_{\bm{p}}$ and $V_{\bm{p}}$ to those probability vectors with fixed eccentricity.
To be precise, for $\kappa\in\Gamma(\mathcal{P})$, write
\begin{align*}
    V_{\bm{p},\kappa}(q)&\coloneqq\min_{\bm{w}\in\Gamma^{-1}(\kappa)}\left\{q\cdot u_{\bm{p}}(\bm{w}) - v(\bm{w})\right\}\\
    F_{\bm{p},\kappa}(\alpha)&\coloneqq\max_{\bm{w}\in\Gamma^{-1}(\kappa)}\left\{v(\bm{w}):u_{\bm{p}}(\bm{w}) = \alpha\right\}
\end{align*}
These constrained optimization problems are always dual problems.
\begin{proposition}
    For each $\kappa\in\Gamma(\mathcal{P})$ and $q\in\R$, there is a unique $\bm{w}\in\mathcal{P}$ with $\Gamma(\bm{w}) = \kappa$ such that
    \begin{equation*}
        V_{\bm{p},\kappa}(q) = q\cdot u_{\bm{p}}(\bm{w}) - v(\bm{w}).
    \end{equation*}
    In particular, duality holds and
    \begin{equation*}
        F_{\bm{p},\kappa}(\alpha) = V_{\bm{p},\kappa}^*(\alpha).
    \end{equation*}
\end{proposition}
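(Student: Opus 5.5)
The plan is to show that, once the eccentricity is frozen, the objective $\bm{w}\mapsto q\cdot u_{\bm{p}}(\bm{w})-v(\bm{w})$ becomes a ratio of a strictly convex function to a positive affine function. Then \cref{l:qc-const} gives strict quasi-convexity, and \cref{l:qc-min} together with \cref{l:duality} gives both uniqueness and duality.

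\emph{Step 1: the domain.} Since $\chi_1,\chi_2$ are affine in $\bm{w}$ and $\chi_2>0$, we have
\begin{equation*}
    \Gamma^{-1}(\kappa)=\{\bm{w}\in\mathcal{P}:\chi_1(\bm{w})-\kappa\chi_2(\bm{w})=0\}.
\end{equation*}
This is the intersection of $\mathcal{P}$ with a hyperplane, so it is a compact convex set. It is nonempty because $\kappa\in\Gamma(\mathcal{P})$. Also, $u_{\bm{p}}$ and $v$ are continuous on it, since $\bm{p}$ is strictly positive and so all cross entropies are finite.

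\emph{Step 2: rewriting the objective on the slice.} On $\Gamma^{-1}(\kappa)$ we substitute $\chi_1(\eta(\bm{w}))=\kappa\chi_2(\bm{w})$ into \cref{e:ly-formulas}. This gives
\begin{equation*}
    q\cdot u_{\bm{p}}(\bm{w})-v(\bm{w})=\frac{q\CH{\bm{w}}{\bm{p}}-H(\bm{w})+(\kappa^{-1}-1)\bigl(q\CH{\eta(\bm{w})}{\eta(\bm{p})}-H(\eta(\bm{w}))\bigr)}{\chi_2(\bm{w})}.
\end{equation*}
In the numerator, both cross-entropy terms are affine in $\bm{w}$, because $\eta$ is linear. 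The term $-H(\bm{w})$ is strictly convex on $\mathcal{P}$. The term $-(\kappa^{-1}-1)H(\eta(\bm{w}))$ is convex, since $H$ is concave, $\eta$ is linear, and $\kappa^{-1}-1>0$ because $\kappa<1$. Hence the numerator is strictly convex and the denominator is positive and affine. By \cref{l:qc-const}, the objective is strictly quasi-convex on $\Gamma^{-1}(\kappa)$.

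\emph{Step 3: uniqueness.} A continuous function on a compact set attains its minimum. Suppose there were two minimizers $\bm{w}\neq\bm{w}'$ with common value $V_{\bm{p},\kappa}(q)$. Strict quasi-convexity would then give a strictly smaller value at $\tfrac12(\bm{w}+\bm{w}')\in\Gamma^{-1}(\kappa)$, which is a contradiction. So the minimizer is unique.

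\emph{Step 4: duality.} We apply \cref{l:qc-min} with $\Delta=\Gamma^{-1}(\kappa)$, $u=u_{\bm{p}}$ and $v=v$. For every $q$ this yields that $V_{\bm{p},\kappa}$ is differentiable at $q$ and that $F_{\bm{p},\kappa}=V_{\bm{p},\kappa}^*$ at $\alpha=V_{\bm{p},\kappa}'(q)$. It remains to cover all $\alpha\in\R$, and this bookkeeping is the main obstacle. Let $[a,b]=u_{\bm{p}}(\Gamma^{-1}(\kappa))$.

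\begin{itemize}
    \item \emph{Outside $[a,b]$.} For $\alpha\notin[a,b]$ we have $F_{\bm{p},\kappa}(\alpha)=-\infty$. Also $V_{\bm{p},\kappa}(q)-q\alpha\to\infty$ as $q\to\pm\infty$ in the appropriate direction, so $V_{\bm{p},\kappa}^*(\alpha)=-\infty$ as well.
    \item \emph{The open interval $(a,b)$.} The derivative $V_{\bm{p},\kappa}'$ is continuous and monotone, because $V_{\bm{p},\kappa}$ is concave and differentiable everywhere. Its limits as $q\to\mp\infty$ should be $b$ and $a$, since minimizers for $q\to+\infty$ must asymptotically minimize $u_{\bm{p}}$ by compactness. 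Hence every $\alpha\in(a,b)$ equals $V_{\bm{p},\kappa}'(q)$ for some $q$, and duality holds there.
    \item \emph{The endpoints $a$ and $b$.} Here I would argue as in the remark after \cref{ic:diff}. Take minimizers $\bm{w}_q$ for $q\to\pm\infty$ and pass to a convergent subsequence using compactness. Continuity of $u_{\bm{p}}$ and $v$ then gives $F_{\bm{p},\kappa}(\alpha)\geq\lim\bigl(q\alpha-V_{\bm{p},\kappa}(q)\bigr)\geq V_{\bm{p},\kappa}^*(\alpha)$. The reverse inequality is \cref{l:duality}(i).
\end{itemize}
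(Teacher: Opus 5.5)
Your proposal is correct and takes the same approach as the paper. You rewrite $q\cdot u_{\bm{p}}-v$ on the slice $\Gamma^{-1}(\kappa)$ as a strictly convex numerator over a positive affine denominator; normalizing by $\chi_2(\bm{w})$ rather than the paper's $\chi_1(\eta(\bm{w}))$ only changes things by the constant factor $\kappa$. You then apply \cref{l:qc-const} and \cref{l:qc-min}, and your explicit uniqueness argument and the Step 4 treatment of $\alpha$ at the endpoints and outside the range of $V_{\bm{p},\kappa}'$ fill in details the paper leaves implicit.
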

\begin{proof}
    For $\kappa\in\Gamma(\mathcal{P})$ fixed, write
    \begin{align*}
        u_{\bm{p},\kappa}(\bm{w}) &=\frac{\kappa \CH{\bm{w}}{\bm{p}} + (1-\kappa)\CH{\eta(\bm{w})}{\eta(\bm{p})}}{\chi_1(\eta(\bm{w}))},\\
        v_{\kappa}(\bm{w}) &=\frac{\kappa H(\bm{w}) + (1-\kappa)H(\eta(\bm{w}))}{\chi_1(\eta(\bm{w}))}.
    \end{align*}
    Then, a direct computation gives for $\bm{w}\in\Gamma^{-1}(\kappa)$ that
    \begin{align*}
        q\cdot u_{\bm{p}}(\bm{w}) - v(\bm{w}) &=q\cdot u_{\bm{p},\kappa}(\bm{w}) - v_\kappa(\bm{w})
    \end{align*}
    But $\Gamma^{-1}(\kappa)$ is a convex subset of $\mathcal{P}$ and $q\cdot u_{\bm{p},\kappa}-v_\kappa$ has a strictly convex numerator and a positive affine denominator, so it is strictly quasi-convex by \cref{l:qc-const}.
    The conclusion follows by \cref{l:qc-min}.
\end{proof}
In particular, we can obtain a result which is a mild generalization of a theorem of King for Bedford--McMullen carpets \cite{zbl:0845.28007}.
\begin{corollary}\label{c:homogeneous}
    Let $\mu_{\bm{p}}$ be a Gatzouras--Lalley measure with separation of principal projections and whose defining IFS is homogeneous.
    Then $\vartheta_{\bm{p}}$ is differentiable and the fine multifractal formalism holds.
\end{corollary}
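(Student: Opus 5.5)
In the homogeneous case $\Gamma(\bm{w})=\kappa$ is the same for every $\bm{w}\in\mathcal{P}$, where $\kappa\coloneqq\kappa_{\min}=\kappa_{\max}$. Hence $\Gamma^{-1}(\kappa)=\mathcal{P}$, and the constrained problems coincide with the unconstrained ones:
\begin{equation*}
    V_{\bm{p},\kappa}=V_{\bm{p}}
    \qquad\text{and}\qquad
    F_{\bm{p},\kappa}=F_{\bm{p}}.
\end{equation*}
The plan is to combine the preceding proposition with \cref{it:main} (proved above as $\vartheta_{\bm{p}}=V_{\bm{p}}$) and the general sandwich \cref{e:gen}.

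The first step is differentiability. By the preceding proposition, for each $q\in\R$ the minimum defining $V_{\bm{p},\kappa}(q)=V_{\bm{p}}(q)$ is attained at a unique $\bm{w}\in\mathcal{P}$. So $M(q)$ is a singleton, and \cref{l:duality}\cref{im:sing} shows that $V_{\bm{p}}'(q)$ exists. Alternatively, the proof of the proposition shows that $\bm{w}\mapsto q\,u_{\bm{p}}(\bm{w})-v(\bm{w})$ is strictly quasi-convex on the convex set $\Gamma^{-1}(\kappa)=\mathcal{P}$, so \cref{l:qc-min} applies directly. Since $\vartheta_{\bm{p}}=V_{\bm{p}}$, the function $\vartheta_{\bm{p}}$ is differentiable on all of $\R$.

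The second step is the formalism. The preceding proposition gives $F_{\bm{p}}=F_{\bm{p},\kappa}=V_{\bm{p},\kappa}^*=V_{\bm{p}}^*=\vartheta_{\bm{p}}^*$ for every $\alpha$. Combining this with \cref{e:gen}, namely $F_{\bm{p}}\leq f_{\bm{p}}\leq\vartheta_{\bm{p}}^*$, forces $f_{\bm{p}}(\alpha)=\vartheta_{\bm{p}}^*(\alpha)$ for all $\alpha\in\R$. This includes values of $\alpha$ outside the range of $\vartheta_{\bm{p}}'$, where both sides equal $-\infty$: $F_{\bm{p}}=-\infty$ there, and the sandwich gives the rest. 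One could instead appeal to \cref{ic:diff}, but that only covers $\alpha$ in the range of the derivative, so the direct sandwich is cleaner.

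The only point needing care is the use of \cref{p:mf-lower}, which is stated for $\alpha\geq0$. For $\alpha<0$ the set $\{u_{\bm{p}}=\alpha\}$ is empty, since $u_{\bm{p}}\geq0$. So $F_{\bm{p}}(\alpha)=-\infty$, and the inequality $F_{\bm{p}}\leq f_{\bm{p}}$ holds trivially there. I also need the preceding proposition's duality statement to hold for all $\alpha$ and not only on the subdifferential images. This follows because strict quasi-convexity holds for every $q$, so \cref{l:qc-min} applies at every $q$. Moreover, $\partial V_{\bm{p}}(q)$ ranges over an interval whose closure is the compact set $u_{\bm{p}}(\mathcal{P})$; the endpoints are handled by the remark following \cref{ic:diff}. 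This bookkeeping is the main potential obstacle, but it is routine.
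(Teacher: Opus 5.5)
Your proposal is correct and takes the same approach the paper intends: the corollary is stated without proof right after the constrained-eccentricity proposition. In the homogeneous case $\Gamma^{-1}(\kappa)=\mathcal{P}$, so that proposition gives a unique minimiser for every $q$, hence differentiability of $\vartheta_{\bm{p}}=V_{\bm{p}}$, and also gives $F_{\bm{p}}=V_{\bm{p}}^*=\vartheta_{\bm{p}}^*$; the sandwich $F_{\bm{p}}\leq f_{\bm{p}}\leq\vartheta_{\bm{p}}^*$ then yields the formalism. Your extra bookkeeping for $\alpha<0$, for $\alpha$ at the endpoints of $u_{\bm{p}}(\mathcal{P})$, and for $\alpha$ outside it is sound, and fills in details the paper leaves implicit.
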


\subsection{Equality of the fine and coarse \texorpdfstring{$L^q$}{Lq}-spectra}\label{ss:lq-equal}
In this section, we characterize when the fine $L^q$-spectrum $\vartheta_{\bm{p}}$ and the coarse $L^q$-spectrum $\tau_{\bm{p}}$ are equal.

Let $Y_{\eta(\bm{p})}(q)$ denote $L^q$-spectrum of $\eta_*\mu_{\bm{p}}$: that is,
\begin{equation*}
    \sum_{\ell\in\eta(\mathcal{I})}\eta(\bm{p})^q_\ell \ctr_{i,1}^{-Y_{\eta(\bm{p})}(q)} = 1.
\end{equation*}
We first recall a special case of \cite[Theorem~2]{zbl:1091.28005}.
\begin{proposition}[\cite{zbl:1091.28005}]\label{p:lq-form}
    Let $\mu_{\bm{p}}$ be a Gatzouras--Lalley measure and $q\in\R$.
    Then $\tau_{\bm{p}}(q)$ is the unique solution to the equation
    \begin{equation*}
        \sum_{i\in\mathcal{I}}p_i^q\ctr_{\eta(i),1}^{-Y_{\eta(\bm{p})}(q)}\ctr_{i, 2}^{-(\tau_{\bm{p}}(q) - Y_{\eta(\bm{p})}(q))} = 1.
    \end{equation*}
\end{proposition}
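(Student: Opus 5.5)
The plan is to compute $\tau_{\bm{p}}(q)$ directly by replacing packings by balls with partitions into approximate squares, and then to evaluate the resulting sum with a two-stage stopping-time argument: first in the vertical direction (cylinders), then in the projected direction (columns).

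\emph{Step 1: reduce to approximate squares.} For small $r>0$, let $\mathcal{Q}_r$ be the family of approximate squares $Q_k(\gamma)$ of height comparable to $r$. By \cref{l:spsc-reduction}, each ball $B(x,r)$ with $x\in\Lambda$ contains $\pi(Q)\cap\Lambda$ for some $Q\in\mathcal{Q}_{cr}$ containing $x$. Conversely, it meets only boundedly many $\pi(Q)$ with $Q\in\mathcal{Q}_r$. The $\mu_{\bm{p}}$-masses of these approximate squares are comparable, because the digits involved come from the finitely many maps $T_i$. Hence, for every $q\in\R$,
\begin{equation*}
    \sup\sum_i\mu_{\bm{p}}\bigl(B(x_i,r)\bigr)^q\asymp\sum_{Q\in\mathcal{Q}_r}\nu_{\bm{p}}(Q)^q.
\end{equation*}
For $q\ge0$ I would use the covering direction; for $q<0$ I would use that each ball centred in the support contains a full approximate square. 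This gives
\begin{equation*}
    \tau_{\bm{p}}(q)=\lim_{r\to0}\frac{\log\sum_{Q\in\mathcal{Q}_r}\nu_{\bm{p}}(Q)^q}{\log r}.
\end{equation*}

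\emph{Step 2: factorize the sum.} Let $\mathcal{S}_r$ be the stopping set of words $\mtt{i}$ with $\ctr_{\mtt{i},2}\approx r$. Every $Q\in\mathcal{Q}_r$ is $P(\mtt{i},\underline{\mtt{j}})$ with $\mtt{i}\in\mathcal{S}_r$ and $\underline{\mtt{j}}$ a column word with $\ctr_{\underline{\mtt{j}},1}\approx \rho_{\mtt{i}}:=\ctr_{\mtt{i},2}/\ctr_{\mtt{i},1}$. Moreover,
\begin{equation*}
    \nu_{\bm{p}}(Q)=p_{\mtt{i}}\,\eta(\bm{p})_{\underline{\mtt{j}}}.
\end{equation*}
So the sum factorizes as
\begin{equation*}
    \sum_{Q\in\mathcal{Q}_r}\nu_{\bm{p}}(Q)^q=\sum_{\mtt{i}\in\mathcal{S}_r}p_{\mtt{i}}^q\sum_{\underline{\mtt{j}}}\eta(\bm{p})_{\underline{\mtt{j}}}^q.
\end{equation*}
The inner sum is over a stopping set for the projected self-similar IFS $\eta\Phi$, which satisfies the strong separation condition. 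For such systems, summing the defining relation of $Y_{\eta(\bm{p})}$ over a stopping set shows that the inner sum is $\asymp\rho_{\mtt{i}}^{Y_{\eta(\bm{p})}(q)}$, with constants uniform in $\rho$ (they depend only on the ratios $\ctr_{i,1}$ and on $q$).

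\emph{Step 3: the outer sum.} Write $Y=Y_{\eta(\bm{p})}(q)$ and define $\tau$ as the unique root of
\begin{equation*}
    \sum_i a_i=1,\qquad a_i=p_i^q\ctr_{i,1}^{-Y}\ctr_{i,2}^{-(\tau-Y)}.
\end{equation*}
Uniqueness holds by strict monotonicity in $\tau$. Since $\ctr_{\mtt{i},2}\approx r$ on $\mathcal{S}_r$,
\begin{equation*}
    p_{\mtt{i}}^q\,\rho_{\mtt{i}}^{Y}=a_{\mtt{i}}\,\ctr_{\mtt{i},2}^{\tau}\asymp a_{\mtt{i}}\,r^{\tau}.
\end{equation*}
The $a_i$ are multiplicative and sum to $1$, so $\sum_{\mtt{i}\in\mathcal{S}_r}a_{\mtt{i}}=1$. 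Combining with Step 2, the full sum is $\asymp r^{\tau}$, and therefore $\tau_{\bm{p}}(q)=\tau$.

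The main obstacle is Step 1 for negative $q$, together with keeping every comparability constant independent of $r$ and of the word $\mtt{i}$ in Step 2. Negative $q$ is handled exactly by the strong separation of principal projections, via \cref{l:spsc-reduction}, which guarantees that no ball centred in $\Lambda$ carries anomalously small mass. The uniformity in Step 2 follows from the self-similar stopping-time identity. In the paper this statement is a special case of the cited theorem, so one could alternatively just verify that its hypotheses hold in the present setting.
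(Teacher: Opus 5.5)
\paragraph{Comparison with the paper.} The paper does not prove this proposition; it quotes it as a special case of Feng--Wang's more general theorem. Your Steps 2 and 3 are a correct, self-contained replacement in the Gatzouras--Lalley setting. They combine three ingredients:
\begin{itemize}
  \item the factorisation $\nu_{\bm{p}}(P(\mtt{i},\underline{\mtt{j}}))=p_{\mtt{i}}\,\eta(\bm{p})_{\underline{\mtt{j}}}$;
  \item the cut-set identity for the weights $\eta(\bm{p})_\ell^q\ctr_{\ell,1}^{-Y}$ over column words at relative scale $\rho_{\mtt{i}}$;
  \item the cut-set identity for the multiplicative weights $a_i$.
\end{itemize}
Together these give $\sum_{Q\in\mathcal{Q}_r}\nu_{\bm{p}}(Q)^q\asymp r^{\tau}$ for every $q$. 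These two families of weights are exactly the vectors $\bm{y}$ and $\bm{z}$ in the proof of \cref{c:lq-equal}, which is a nice consistency check. Note also that the cut-set identity needs no separation of $\eta\Phi$. The price of not citing is that the passage from ball packings to approximate squares must be done by hand, and that is where your write-up has a gap.

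\paragraph{The gap in Step 1.} First, you claim that the boundedly many approximate squares meeting $B(x,r)$ have comparable masses, justified by finiteness of the alphabet. Without separation this is false. Adjacent squares may have codings that differ in an early symbol, for example on either side of an edge shared by two touching columns, and then their masses differ by a factor exponential in the level. Finiteness of the alphabet only gives comparability of \emph{nested} squares $Q_k(\gamma)\supseteq Q_{k+m}(\gamma)$ with $m$ bounded.

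Second, the two arguments you actually give (covering for $q\ge0$, containment for $q<0$) both bound the packing supremum from above. So they only prove $\tau_{\bm{p}}(q)\ge\tau$. You never exhibit a packing whose sum is at least a constant times $\sum_{Q}\nu_{\bm{p}}(Q)^q$.
\begin{itemize}
  \item For $q\ge0$ this is routine and needs no separation. Take one centre $x_Q\in\pi(Q)$ per square with radius $Cr$, and split the boundedly overlapping family into boundedly many packings.
  \item For $q<0$ this is exactly where separation of principal projections is needed. It enters through the \emph{first} inclusion of \cref{l:spsc-reduction}, $B(x,cr_k)\cap\Lambda\subseteq\pi(Q_k(\gamma))$, together with injectivity of $\pi$. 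With one centre per square and a suitable radius $c'r$, the balls are pairwise disjoint and each has mass at most $\nu_{\bm{p}}(Q)$.
\end{itemize}
So your closing remark has the direction reversed. Balls of anomalously small mass are excluded for free, since each ball centred in $\Lambda$ contains a whole approximate square without any separation. SPP is what excludes balls of anomalously large mass.

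Finally, the proposition is stated without any separation hypothesis. You should therefore say explicitly that your proof of the case $q<0$ assumes separation of principal projections. This costs nothing in the paper, which only uses the formula under that hypothesis in \cref{c:lq-equal}.
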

Now, in order to state our characterization, we need some more notation.
For $\ell\in\eta(\mathcal{I})$ and $\bm{p}\in\mathcal{P}$, let $t_{\bm{p},\ell}(q)$ be the unique solution to the equation
\begin{equation*}
    \sum_{j\in\eta^{-1}(\ell)}\left(\frac{\bm{p}_j}{\eta(\bm{p})_\ell}\right)^q \ctr_{i,2}^{-t_{\bm{p},\ell}(q)} = 1.
\end{equation*}
Precisely, $t_{\bm{p},\ell}$ is the $L^q$ spectrum of the self-similar IFS corresponding to column $\ell$.
Equivalently, $t_{\bm{p},\ell} = T_{\bm{p}}(\bm{\delta}_\ell, q)$ where $\bm{\delta}_\ell$ is the dirac mass on column $\ell$.
\begin{corollary}\label{c:lq-equal}
    Let $\mu_{\bm{p}}$ be a Gatzouras--Lalley measure with separation of principal projections and let $q\in\R$.
    Then the following are equivalent:
    \begin{enumerate}[nl,r]
        \item\label{i:t-1} $\tau_{\bm{p}}(q) = \vartheta_{\bm{p}}(q)$.
        \item\label{i:t-2} The function $\ell\mapsto t_{\bm{p},\ell}(q)$ is constant.
        \item\label{i:t-3} The function $\bm{v}\mapsto T_{\bm{p}}(\bm{v}, q)$ is constant.
    \end{enumerate}
    Moreover, if any of the above equivalent conditions hold, writing $t_{\bm{p}}(q) = t_{\bm{p},\ell}(q)$ for the constant value of $\ell\mapsto t_{\bm{p},\ell}(q)$,
    \begin{equation*}
        \tau_{\bm{p}}(q) = \vartheta_{\bm{p}}(q) = Y_{\eta(\bm{p})}(q) + t_{\bm{p}}(q).
    \end{equation*}
\end{corollary}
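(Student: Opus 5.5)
The plan is to prove the equivalences in the order (ii)$\Leftrightarrow$(iii), then (iii)$\Rightarrow$(i) together with the formula, and finally (i)$\Rightarrow$(ii). Throughout we write $\bm{v}\in\eta(\mathcal{P})$ and use the identity $\Psi(\bm{v},q,s)=\sum_\ell \bm{v}_\ell\,\psi_\ell(s)$. Here
\begin{equation*}
    \psi_\ell(s)=-\log\sum_{j\in\eta^{-1}(\ell)}p_j^q\ctr_{j,2}^{-s},
\end{equation*}
so $\Psi$ is affine in $\bm{v}$. The defining equation of $T_{\bm{p}}(\bm{v},q)$ becomes
\begin{equation*}
    \sum_\ell\bm{v}_\ell\bigl(\psi_\ell(s)+q\log\eta(\bm{p})_\ell\bigr)=0.
\end{equation*}
Since $t_{\bm{p},\ell}(q)$ is exactly the zero of $\psi_\ell(s)+q\log\eta(\bm{p})_\ell$, we have $t_{\bm{p},\ell}=T_{\bm{p}}(\bm{\delta}_\ell,q)$.

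\textbf{(iii)$\Rightarrow$(ii)} is then immediate.

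\textbf{(ii)$\Rightarrow$(iii).} If every term vanishes at a common $s=t$, then the weighted sum vanishes at $t$ for every $\bm{v}$. By uniqueness (\cref{l:Tvq-choice}), $T_{\bm{p}}(\bm{v},q)=t$.

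\textbf{(iii)$\Rightarrow$(i).} When $T_{\bm{p}}(\cdot,q)\equiv t$, \cref{it:main} gives
\begin{equation*}
    \vartheta_{\bm{p}}(q)=t+\min_{\bm{v}}\frac{q\CH{\bm{v}}{\eta(\bm{p})}-H(\bm{v})}{\chi_1(\bm{v})}.
\end{equation*}
The remaining minimum is the standard self-similar variational formula for the $L^q$-spectrum of the projected measure, which is a self-similar measure with strong separation. It equals $Y_{\eta(\bm{p})}(q)$. To verify this, write the numerator minus $Y\chi_1(\bm{v})$ as $\DKL{\bm{v}}{\bm{r}}\ge0$, where
\begin{equation*}
    \bm{r}_\ell=\eta(\bm{p})_\ell^q\ctr_{\ell,1}^{-Y},
\end{equation*}
with equality at $\bm{v}=\bm{r}$.

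On the coarse side, substitute $\tau=Y+t$ into \cref{p:lq-form}. Group the sum by columns and use $p_j=\eta(\bm{p})_\ell\cdot(p_j/\eta(\bm{p})_\ell)$. The inner column sums equal $1$ by the definition of $t_{\bm{p},\ell}=t$, and the outer sum then equals $1$ by the definition of $Y$. Hence $\tau_{\bm{p}}(q)=Y+t=\vartheta_{\bm{p}}(q)$.

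\textbf{(i)$\Rightarrow$(ii).} This is the main obstacle. I would show $\vartheta_{\bm{p}}(q)<\tau_{\bm{p}}(q)$ whenever the $t_\ell:=t_{\bm{p},\ell}(q)$ are not all equal. The key is that \cref{p:lq-form}, after grouping by columns, reads
\begin{equation*}
    \sum_\ell \eta(\bm{p})_\ell^q\ctr_{\ell,1}^{-Y}\, G_\ell(\tau-Y)=1,
\end{equation*}
where $G_\ell(s)=\sum_{j}(p_j/\eta(\bm{p})_\ell)^q\ctr_{j,2}^{-s}$ is strictly increasing in $s$ with $G_\ell(t_\ell)=1$. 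Since $\sum_\ell\bm{r}_\ell=1$, $\tau-Y$ is a weighted ``average'' of the $t_\ell$. It lies strictly between $\min_\ell t_\ell$ and $\max_\ell t_\ell$ unless all the $t_\ell$ coincide.

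Next choose $\bm{v}=\bm{\delta}_{\ell_0}$, where $\ell_0$ minimizes $t_\ell$, or more robustly $\bm{v}=\bm{r}$. Since $\vartheta_{\bm{p}}(q)$ is a minimum over $\bm{v}$, the choice $\bm{v}=\bm{r}$ gives
\begin{equation*}
    \vartheta_{\bm{p}}(q)\le Y+T_{\bm{p}}(\bm{r},q).
\end{equation*}
So it suffices to show $T_{\bm{p}}(\bm{r},q)<\tau-Y$ in the non-constant case. By definition, $T(\bm{r})$ solves $\sum_\ell\bm{r}_\ell\log G_\ell(s)=0$, while $\tau-Y$ solves $\log\sum_\ell\bm{r}_\ell G_\ell(s)=0$. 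By Jensen's inequality (strict concavity of $\log$),
\begin{equation*}
    \sum_\ell\bm{r}_\ell\log G_\ell(s)\le\log\sum_\ell\bm{r}_\ell G_\ell(s),
\end{equation*}
with strict inequality unless all the $G_\ell(s)$ are equal.

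At $s=\tau-Y$ the right side is $0$. The $G_\ell(\tau-Y)$ cannot all be equal, since otherwise they would all equal $1$, forcing all $t_\ell=\tau-Y$. So the left side is strictly negative at $s=\tau-Y$. Since the left side is increasing in $s$, its zero satisfies $T(\bm{r})>\tau-Y$. This is the wrong direction, so I must recheck the sign conventions carefully; this sign bookkeeping is exactly the expected obstacle.

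If the sign really gives $\vartheta\ge\tau$, I would instead minimize over $\bm{v}$ near $\bm{\delta}_{\ell_0}$ to exploit $t_{\ell_0}<\tau-Y$. Alternatively, I would use that $\vartheta_{\bm{p}}\le\tau_{\bm{p}}$ in general (Hausdorff-type versus box-type quantities), so that showing $\vartheta\neq\tau$ suffices. The Jensen strictness argument would then supply strict inequality via the minimizing $\bm{v}$.
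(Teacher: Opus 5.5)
Your arguments for (ii)$\Leftrightarrow$(iii) and for (iii)$\Rightarrow$(i), including the formula $\tau_{\bm p}(q)=\vartheta_{\bm p}(q)=Y_{\eta(\bm p)}(q)+t_{\bm p}(q)$, are correct. However, (i)$\Rightarrow$(ii) is not proved, and neither fallback you list can work.

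\textbf{Why the fallbacks fail.} Substituting a particular $\bm v$ (either $\bm r$ or something near $\bm\delta_{\ell_0}$) into the minimum of \cref{it:main} only ever gives an \emph{upper} bound on $\vartheta_{\bm p}(q)$. But the true inequality is $\vartheta_{\bm p}\ge\tau_{\bm p}$, not $\le$; already at $q=0$ it reads $\dimH\Lambda\le\dimuB\Lambda$. So trying to force $\vartheta_{\bm p}(q)<\tau_{\bm p}(q)$ using $\bm v$ near $\bm\delta_{\ell_0}$ means trying to prove a false statement. Likewise, your Jensen computation at $\bm v=\bm r$ only shows that one competitor in the minimum exceeds $\tau_{\bm p}(q)$, which says nothing about the minimizer. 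What you need is a lower bound on the objective valid for \emph{every} $\bm v$ (or every $\bm w$), together with a characterization of the equality case. This necessarily uses the eccentricity assumption $\ctr_{i,2}<\ctr_{i,1}$, which your argument never invokes.

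\textbf{How the paper does it.} The paper uses the first formula in \cref{it:main}. Take $\bm y=\bm r$, and let $\bm z_i=p_i^q\ctr_{\eta(i),1}^{-Y}\ctr_{i,2}^{-(\tau-Y)}$, which is a probability vector by \cref{p:lq-form}. Then one has the exact identity
\begin{equation*}
    q u_{\bm p}(\bm w)-v(\bm w)-\tau_{\bm p}(q)=\frac{\DKL{\bm w}{\bm z}}{\chi_2(\bm w)}+\Bigl(\frac{1}{\chi_1(\eta(\bm w))}-\frac{1}{\chi_2(\bm w)}\Bigr)\DKL{\eta(\bm w)}{\bm y}.
\end{equation*}
Both terms are nonnegative because $\chi_1<\chi_2$. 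Hence $\vartheta_{\bm p}(q)=\tau_{\bm p}(q)$ if and only if $\eta(\bm z)=\bm y$, i.e.\ all your $G_\ell(\tau-Y)$ equal $1$, i.e.\ all $t_{\bm p,\ell}(q)=\tau-Y$.

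\textbf{Repairing your fibred route.} Alternatively, extend your Jensen step to all $\bm v$. Set $s_0=\tau-Y$ and $D=\DKL{\bm v}{\bm r}$, and note that the objective equals $Y+D/\chi_1(\bm v)+T_{\bm p}(\bm v,q)$.
\begin{itemize}
    \item Gibbs' inequality together with $\sum_\ell\bm r_\ell G_\ell(s_0)=1$ gives $\sum_\ell\bm v_\ell\log G_\ell(s_0)\le D$.
    \item The eccentricity assumption $\ctr_{j,2}<\ctr_{\eta(j),1}$ gives $G_\ell(s_0-c)\le\ctr_{\ell,1}^{c}\,G_\ell(s_0)$ for $c\ge0$, strictly if $c>0$.
\end{itemize}
Hence $\sum_\ell\bm v_\ell\log G_\ell\bigl(s_0-D/\chi_1(\bm v)\bigr)\le\sum_\ell\bm v_\ell\log G_\ell(s_0)-D\le0$. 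This yields $T_{\bm p}(\bm v,q)\ge s_0-D/\chi_1(\bm v)$, so the objective is at least $\tau_{\bm p}(q)$ for every $\bm v$.

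For the equality case: equality forces $D=0$ by strictness of the eccentricity step, so $\bm v=\bm r$. Then the Jensen equality case forces all $G_\ell(s_0)=1$. Your computation at $\bm v=\bm r$ is exactly this $D=0$ case; the missing content is the uniform bound over all $\bm v$ and the use of $\ctr_{i,2}<\ctr_{i,1}$.
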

\begin{proof}
    Clearly \cref{i:t-3} implies \cref{i:t-2}.
    For the converse, assuming \cref{i:t-2} and writing $t_{\bm{p}}(q)$ for the common value, we know that $t_{\bm{p}}(q) = T_{\bm{p}}(\bm{\delta}_\ell, q)$ for all $\ell\in\eta(\mathcal{I})$.
    Therefore if $\bm{v}\in\eta(\mathcal{P})$ is arbitrary,
    \begin{align*}
        \Psi(\bm{v}, q, t_{\bm{p}}(q)) &= -\sum_{\ell\in\eta(\mathcal{I})}\bm{v}_\ell \log\left(\sum_{j\in\eta^{-1}(\ell)}p_j^q\beta_{j,2}^{-t_{\bm{p}}(q)}\right)\\
                                       &= -\sum_{\ell\in\eta(\mathcal{I})}\bm{v}_\ell \log\eta(\bm{p})_\ell^q\\
                                       &= q \CH{\bm{v}}{\eta(\bm{p})}.
    \end{align*}
    Therefore $T_{\bm{p}}(\bm{v}, q) = t_{\bm{p}}(q)$ by uniqueness.

    Next we prove that \cref{i:t-1} and \cref{i:t-2} are equivalent.
    Define probability vectors $\bm{y}\in\eta(\mathcal{P})$ and $\bm{z}\in\mathcal{P}$ by the rule
    \begin{equation*}
        \bm{y} = \bigl(\eta(\bm{p})_\ell^q \ctr_{\ell,1}^{-Y_{\eta(\bm{p})}(q)}\bigr)_{\ell\in\eta(\mathcal{I})},\qquad \bm{z} = \bigl(\bm{p}_i^q \ctr_{\eta(i), 1}^{-Y_{\eta(\bm{p})}(q)}\ctr_{i,2}^{-(\tau_{\bm{p}}(q) - Y_{\eta(\bm{p})}(q))}\bigr)_{i\in\mathcal{I}}.
    \end{equation*}
    (These are the minimizing probability vectors for the optimization stated in \cite[Theorem~2.1]{zbl:1549.37013} in the special case of Gatzouras--Lalley measures.)
    A direct computation gives for $\bm{w}\in\mathcal{P}$ that
    \begin{equation*}
        q u_{\bm{p}}(\bm{w})-v(\bm{w})-\tau_{\bm{p}}(q) = \frac{\DKL{\bm{w}}{\bm{z}}}{\chi_2(\bm{w})} + \left(\frac{1}{\chi_1(\eta(\bm{w}))}-\frac{1}{\chi_2(\bm{w})}\right)\DKL{\eta(\bm{w})}{\bm{y}}.
    \end{equation*}
    Taking the minimum over $\bm{w}$ and recalling \cref{it:main}, we see that $\vartheta_{\bm{p}}(q) = \tau_{\bm{p}}(q)$ if and only if $\eta(\bm{z}) = \bm{y}$, in which case the minimum is attained exactly at $\bm{z}$.
    Now,
    \begin{equation*}
        \eta(\bm{z})_\ell = \eta(\bm{p})_\ell^q \ctr_{\ell, 1}^{-Y_{\eta(\bm{p})}(q)}\sum_{i\in\eta^{-1}(\ell)}\left(\frac{\bm{p}_i}{\eta(\bm{p})_\ell}\right)^q\ctr_{i,2}^{-(\tau_{\bm{p}}(q) - Y_{\eta(\bm{p})}(q))}.
    \end{equation*}
    Term $\ell$ in the summation on the right is exactly 1 if and only if $\tau_{\bm{p}}(q) =Y_{\eta(\bm{p})}(q)+t_{\bm{p},\ell}(q)$.
    Therefore comparing $\eta(\bm{z})_\ell$ to the definition of $\bm{y}$, we see that $\eta(\bm{z})=\bm{y}$ if and only if \cref{i:t-2} holds.
    Thus \cref{i:t-1} and \cref{i:t-3} are equivalent.

    To conclude, we also see from the above that $\tau_{\bm{p}}(q) = Y_{\eta(\bm{p})}(q)+t_{\bm{p}}(q)$, as claimed.
\end{proof}
We can now complete the proof of \cref{ic:coarse-formalism}.
\begin{proofref}{ic:coarse-formalism}
    Equivalence of \cref{i:cf-2} and \cref{i:cf-3} is simply \cref{c:lq-equal}.

    Assuming \cref{i:cf-2}, since $\tau_{\bm{p}}$ is differentiable, \cref{i:cf-1} follows using \cref{ic:diff}.
    Conversely, assuming \cref{i:cf-2}, it follows that $\vartheta_{\bm{p}} = f_{\bm{p}}^* = \tau_{\bm{p}}$, which is \cref{i:cf-1}.
\end{proofref}

\section{Gatzouras--Lalley measures with two columns}\label{s:two-col}
We now turn our attention to Gatzouras--Lalley measures with two columns.
We will prove our variational formula for the multifractal spectrum \cref{it:two-col-var} and construct some interesting examples to prove \cref{it:two-col-exc}.

\subsection{A variational formula for the multifractal spectrum for systems with two columns}
Recall in \cref{p:mf-lower} that we noted a general lower bound for the multifractal spectrum.
Unfortunately, we are not able to determine in general if this inequality is necessarily an equality, or if the inequality may in some cases be strict.
(Of course, equality always holds at slopes corresponding to points of differentiability of $\vartheta_{\bm{p}}(q)$.)

Why does a similar approach from our proof of the variational formula for the fine $L^q$-spectrum not also handle the multifractal spectrum?
The proof of \cref{c:lq-lower} shows, in general, that it is always possible to choose a good measure for a given point $x\in\Lambda$ using a well-chosen sequence of empirical frequency measures, depending on the coding of the point $x$.
The problem is that such a measure is only valid along an infinite sequence of scales, and at those scales the local dimension might not satisfy the additional constraint required by the definition of $F_{\bm{p}}$.

However, when $\mu_{\bm{p}}$ is a Gatzouras--Lalley measure with two columns (so that $\eta(\mathcal{P})$ is a 1-dimensional space), we can show that equality indeed holds.
Before we can prove our result, we need a continuity result about the map $k\mapsto \bm{\lambda}_k$.
Here, we embed $\mathcal{P}\subset\R^{\#\mathcal{I}}$ and use the supremum norm.
\begin{lemma}\label{l:cont}
    For all $\gamma\in\mathcal{I}^{\N}$ and $k,m\in\N$,
    \begin{equation*}
        \norm{\bm{\lambda}_k(\gamma)-\bm{\lambda}_{k+m}(\gamma)}_\infty \leq \frac{2m}{k}.
    \end{equation*}
\end{lemma}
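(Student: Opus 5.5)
The plan is to compare the two empirical vectors coordinatewise, directly from the counting definition of $\bm{\xi}$. Write $N_i(n)=\#\{1\leq j\leq n:\gamma_j=i\}$, so that $(\bm{\lambda}_n)_i=N_i(n)/n$. Passing from $k$ to $k+m$ can only increase the counts, and by at most $m$ in total; hence for each $i\in\mathcal{I}$,
\begin{equation*}
    N_i(k)\leq N_i(k+m)\leq N_i(k)+m.
\end{equation*}
From this we fix $i$ and estimate $\bigl|N_i(k)/k-N_i(k+m)/(k+m)\bigr|$.

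Next, split the difference using the common denominator $k(k+m)$:
\begin{equation*}
    \frac{N_i(k)}{k}-\frac{N_i(k+m)}{k+m}
    =\frac{N_i(k)(k+m)-kN_i(k+m)}{k(k+m)}
    =\frac{m N_i(k)-k\bigl(N_i(k+m)-N_i(k)\bigr)}{k(k+m)}.
\end{equation*}
Both terms in the numerator are nonnegative, so the absolute value of the numerator is at most the larger of them. The first satisfies $mN_i(k)\leq mk$. The second satisfies $k\bigl(N_i(k+m)-N_i(k)\bigr)\leq km$. Therefore
\begin{equation*}
    \Bigl|(\bm{\lambda}_k)_i-(\bm{\lambda}_{k+m})_i\Bigr|\leq\frac{km}{k(k+m)}\leq\frac{m}{k}\leq\frac{2m}{k}.
\end{equation*}
Finally, take the maximum over $i\in\mathcal{I}$ to get the supremum-norm bound.

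There is no real obstacle here. The only point needing care is to keep the two contributions, one from renormalising the old counts and one from the $m$ new digits, of opposite sign. Bounding them crudely by the triangle inequality gives $m/k+m/(k+m)\leq 2m/k$, which is exactly the stated constant, so either route suffices.
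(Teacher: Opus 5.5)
Your proof is correct and rests on the same observation as the paper's: the count increments $N_i(k+m)-N_i(k)$ lie in $[0,m]$. The paper then applies the triangle inequality through the intermediate vector $\tfrac{k+m}{k}\bm{\lambda}_{k+m}$ to get $m/k+m/k$, whereas your opposite-sign argument gives the sharper bound $m/(k+m)$, which is more than enough for \cref{l:ivt}.
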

\begin{proof}
    Write $\gamma = (i_n)_{n=1}^\infty$.
    For notational simplicity, we suppress $\gamma$ for the remainder of the proof.
    Then, by definition,
    \begin{equation*}
        (k+m)\cdot \bm{\lambda}_{k+m} - k\cdot \bm{\lambda}_k = \bigl(\#\{n:k+1 \leq n \leq k + m,\ i_n = j\}\bigr)_{j\in\mathcal{I}} \in [0,m]^{\#\mathcal{I}}
    \end{equation*}
    so
    \begin{equation*}
        \norm{\bm{\lambda}_k - \bm{\lambda}_{k+m}}_\infty
        \leq \norm{\bm{\lambda}_k - \tfrac{k+m}{k}\bm{\lambda}_{k+m}}_\infty + \norm{\tfrac{k+m}{k}\bm{\lambda}_{k+m}-\bm{\lambda}_{k+m}}_\infty\\
        \leq \frac{m}{k}+\frac{m}{k}
    \end{equation*}
    as claimed.
\end{proof}
Now, using the assumption that $\#\eta(\mathcal{I}) = 2$, we can use \cref{l:cont} to choose a good sequence of scales.
\begin{lemma}\label{l:ivt}
    Suppose $\#\eta(\mathcal{I}) = 2$ and let $\gamma\in\mathcal{I}^{\N}$ be arbitrary.
    Then there exists a subsequence $(k_n)_{n=1}^\infty$ such that
    \begin{equation*}
        \lim_{n\to\infty}\eta(\bm{\lambda}_{k_n}(\gamma)) = \lim_{n\to\infty}\eta(\bm{\lambda}_{L_{k_n}(\gamma)}(\gamma)).
    \end{equation*}
\end{lemma}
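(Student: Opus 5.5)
The plan is to reduce to a scalar quantity and use a discrete intermediate value argument. Since $\#\eta(\mathcal{I})=2$, the vector $\eta(\bm{\lambda}_k(\gamma))$ is determined by its first coordinate $x_k\in[0,1]$, the frequency of the first column among the first $k$ digits of $\gamma$. Set $d(k)=x_{L_k}-x_k$ with $L_k=L_k(\gamma)$. It suffices to find a subsequence $(k_n)$ along which $d(k_n)\to 0$. Once we have it, we pass to a further subsequence along which $x_{k_n}$ converges, by compactness of $[0,1]$, and then $x_{L_{k_n}}$ converges to the same limit.

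\emph{Step 1: $d$ varies slowly.} There is a constant $C$ depending only on the IFS such that $k\le L_k\le Ck$ and $L_{k+1}-L_k\le C$. Indeed, passing from $k$ to $k+1$ multiplies $\ctr_{\gamma\npre{k},2}$ by at least $\min_i\ctr_{i,2}$, and each additional digit multiplies the width by at most $\max_i\ctr_{i,1}<1$, so a bounded number of extra digits suffice. Applying \cref{l:cont} to the two pairs $(k,k+1)$ and $(L_k,L_{k+1})$ then gives $|d(k+1)-d(k)|\le 2/k+2C/L_k\le (2+2C)/k\to 0$. Here we use that the first coordinate is a sum of coordinates, so the sup-norm bound gets multiplied by at most $\#\mathcal{I}$, which is harmless.

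\emph{Step 2: argue by contradiction.} Suppose no such subsequence exists. Then there are $\varepsilon>0$ and $K$ with $|d(k)|\ge\varepsilon$ for all $k\ge K$. Enlarging $K$ so that $(2+2C)/k<2\varepsilon$ for $k\ge K$, the sequence $d(k)$ cannot jump from $\ge\varepsilon$ to $\le-\varepsilon$ in one step. Hence $d$ has constant sign for $k\ge K$; say $d(k)\ge\varepsilon$, the other case being symmetric. Define $k_0=K$ and $k_{n+1}=L_{k_n}\ge k_n$. Telescoping gives
\begin{equation*}
    x_{k_n}=x_{k_0}+\sum_{j<n}d(k_j)\ge x_K+n\varepsilon,
\end{equation*}
which exceeds $1$ for large $n$. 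This contradicts $x_{k_n}\in[0,1]$.

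The main thing to verify carefully is Step 1, namely the bounded increment $L_{k+1}-L_k\le C$ and the comparability $L_k\asymp k$, since these are what make $d$ "continuous" enough for the sign argument. The rest is the elementary iteration in Step 2, which genuinely uses one-dimensionality: with three or more columns, $d$ is vector-valued and need not pass near zero.
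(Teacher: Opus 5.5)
Your proof is correct and is essentially the paper's argument: the same scalar quantity $x_{L_k}-x_k$, the same slow-variation estimate (bounded increments of $L_k$ combined with \cref{l:cont}), a discrete intermediate value argument, and compactness at the end. The only difference is how you exclude a $d$ that is eventually of one sign and bounded away from $0$. You telescope along the orbit $k\mapsto L_k$, whereas the paper obtains $\liminf\alpha_n\le 0\le\limsup\alpha_n$ by looking at indices where $x_n$ is close to its $\limsup$ or $\liminf$; both work.
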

\begin{proof}
    Using \cref{l:cont}, this is essentially the intermediate value theorem.
    First, since the eccentricity of the rectangles in the IFS are bounded strictly between $0$ and $1$, there is a constant $m$ such that $0 \leq L_{n+1}(\gamma) - L_n(\gamma) \leq m$ for all $n\in\N$.

    Since $\#\eta(\mathcal{I}) = 2$, consider the homeomorphism $\iota\colon\eta(\mathcal{P}) \to [0,1]$ given by $\iota((p, 1-p)) = p$ so $\norm{\bm{v} - \bm{w}}_\infty = \norm{\iota(\bm{v}) - \iota(\bm{w})}_\infty$.
    Write
    \begin{equation*}
        \alpha_n = \iota(\eta(\bm{\lambda}_{L_n(\gamma)}(\gamma))) - \iota(\eta(\bm{\lambda}_n(\gamma))) \in [-1, 1].
    \end{equation*}
    By considering subsequences along which $\iota(\eta(\bm{\lambda}_n(\gamma)))$ is maximal or minimal, we see that $\liminf_{n\to\infty}\alpha_n \leq 0 \leq \limsup_{n\to\infty}\alpha_n$.
    But $|\alpha_{n+1} - \alpha_n| \leq 4m/n$ by \cref{l:cont}, where $m$ is fixed, so there exists a subsequence $(k_n)_{n=1}^\infty$ such that $\lim_{n\to\infty}\alpha_n = 0$.
    Passing to a subsequence again if necessary, the proof is complete.
\end{proof}
Now we can prove the variational principle for the multifractal spectrum.
\begin{restatement}{it:two-col-var}
    Let $\mu_{\bm{p}}$ be a Gatzouras--Lalley measure with separation of principal projections and with two columns.
    Then for all $\alpha\in\R$,
    \begin{align*}
        f_{\bm{p}}(\alpha) = F_{\bm{p}}(\alpha) &= \sup_{\bm{w}\in\mathcal{P}}\left\{v(\bm{w}):u_{\bm{p}}(\bm{w}) = \alpha\right\}\\*
        &=
        \max_{\bm{v}\in\eta(\mathcal{P})}
        \left\{
            \frac{H(\bm{v})}{\chi_1(\bm{v})}
            +
            T^*_{\bm{p}}\left(\bm{v}, \alpha-\frac{\CH{\bm{v}}{\eta(\bm{p})}}{\chi_1(\bm{v})}\right)
        \right\}.
    \end{align*}
\end{restatement}
\begin{proof}
    Recall from \cref{p:mf-lower} that the lower bound $f_{\bm{p}} \geq F_{\bm{p}}$ always holds.
    Fix $\alpha\in\R$ and set
    \begin{equation*}
        E_\alpha=\{x\in\Lambda:\dim_{\loc}(\mu_{\bm{p}},x)=\alpha\}.
    \end{equation*}

    Fix $\delta>0$.
    To obtain the upper bound $\dimH E_\alpha\leq F_{\bm{p}}(\alpha)$, it suffices to show for all $x\in E_\alpha$ that there are $\bm{w}\in\mathcal{P}$ and a fully supported $\bm{z}\in\mathcal{P}$ such that $u_{\bm{p}}(\bm{w})=\alpha$ and
    \begin{equation*}
        \liminf_{r \to 0}\frac{\log \mu_{\bm{z}}\bigl(B(x,r)\bigr)}{\log r} \leq v(\bm{w})+\delta.
    \end{equation*}
    If the constraint $u_{\bm{p}}(\bm{w})=\alpha$ has no solution, then $E_\alpha=\varnothing$.
    Otherwise, $v(\bm{w})\leq F_{\bm{p}}(\alpha)$, so \cref{p:pointwise-var} with $\lambda=\mu_{\bm{p}}$, $E=E_\alpha$, $q=0$, $t=F_{\bm{p}}(\alpha)+\delta$, and $\Delta=\{\mu_{\bm{z}}:\bm{z}\in\mathcal{P}\}$ gives $\dimH E_\alpha\leq F_{\bm{p}}(\alpha)+\delta$.
    Here, we recall that $\Delta$ has uniform densities by \cref{l:meas-quasi-compact}.
    Taking $\delta\to0$ will give the upper bound.

    Let $x = \pi(\gamma)\in E_\alpha$.
    By \cref{l:square-measure} and \cref{l:spsc-reduction},
    \begin{equation*}
        \lim_{k\to\infty}
        \left(\frac{\CH{\eta(\bm{\lambda}_{L_k(\gamma)})}{\eta(\bm{p})}}{\chi_1(\eta(\bm{\lambda}_{L_k(\gamma)}))}
        +\frac{\CH{\bm{\lambda}_k}{\bm{p}}-\CH{\eta(\bm{\lambda}_k)}{\eta(\bm{p})}}{\chi_2(\bm{\lambda}_k)}\right) = \alpha.
    \end{equation*}
    Since $\#\eta(\mathcal{I}) = 2$, by \cref{l:ivt}, there exists a subsequence $(k_n)_{n=1}^\infty$ such that
    \begin{equation*}
        \lim_{n \to\infty}\eta(\bm{\lambda}_{L_{k_n}}(\gamma)) = \lim_{n\to\infty}\eta(\bm{\lambda}_{k_n}) \eqqcolon \bm{v}.
    \end{equation*}
    Passing to a further subsequence if necessary, we may also assume that $\bm{w} = \lim_{n\to\infty}\bm{\lambda}_{k_n}$.
    Since $\eta(\bm{w}) = \bm{v}$,
    \begin{equation*}
        u_{\bm{p}}(\bm{w}) = \frac{\CH{\eta(\bm{w})}{\eta(\bm{p})}}{\chi_1(\bm{v})}
        +\frac{\CH{\bm{w}}{\bm{p}}-\CH{\eta(\bm{w})}{\eta(\bm{p})}}{\chi_2(\bm{w})} = \alpha.
    \end{equation*}
    For $\varepsilon\in(0,1)$, set $\bm{w}_\varepsilon=(1-\varepsilon)\bm{w}+\varepsilon\bm{p}$, which is fully supported.
    Applying \cref{l:square-measure} and \cref{l:spsc-reduction} along the subsequence,
    \begin{equation*}
        \liminf_{r\to0}\frac{\log\mu_{\bm{w}_\varepsilon}\bigl(B(x,r)\bigr)}{\log r}
        \leq\frac{\CH{\bm{v}}{\eta(\bm{w}_\varepsilon)}}{\chi_1(\bm{v})}
        +\frac{\CH{\bm{w}}{\bm{w}_\varepsilon}-\CH{\bm{v}}{\eta(\bm{w}_\varepsilon)}}{\chi_2(\bm{w})}.
    \end{equation*}
    As $\varepsilon\to0$, the right-hand side tends to $v(\bm{w})$.
    Taking $\varepsilon>0$ sufficiently small and $\bm{z}=\bm{w}_\varepsilon$ proves the required bound.
\end{proof}

\subsection{Gatzouras--Lalley measures with homogeneous columns}\label{ss:two-col-special}
All of our examples are constructed using a special family of Gatzouras--Lalley carpets with two columns.
These examples will fall in the following special class.
\begin{definition}
    We say that a Gatzouras--Lalley IFS and its associated carpet have \emph{homogeneous columns} if for all $k\in\eta(\mathcal{I})$ and $j\in\{1,2\}$ the map $i\mapsto\ctr_{i,j}$ is constant on the column $\eta^{-1}(k)$.
    Similarly, we say that a Gatzouras--Lalley measure $\mu_{\bm{p}}$ for $\bm{p}\in\mathcal{P}(\mathcal{I})$ has \emph{homogeneous columns} if the underlying IFS has homogeneous columns and moreover for every $k\in\eta(\mathcal{I})$ the map $i\mapsto\bm{p}_i$ is constant on the column $\eta^{-1}(k)$.
\end{definition}
When the IFS has homogeneous columns, we write $\ctr_{i,1} = a_{\eta(i)}$ and $\ctr_{i,2} = b_{\eta(i)}$ for numbers $0<b_j < a_j < 1$ where $j\in\eta(\mathcal{I})$.
We also set $\ell_j = \#\eta^{-1}(j)$ for $j\in\eta(\mathcal{I})$.
For a measure with homogeneous columns, we also abuse notation and use $\bm{p}$ and $\eta(\bm{p})$ interchangeably.

For $\bm{v},\bm{p}\in\eta(\mathcal{P})$, we use some shorthand:
\begin{align*}
    \chi_2(\bm{v})&=\sum_{j\in\eta(\mathcal{I})}v_j\log(1/b_j),\\
    \ell(\bm{v})&=\sum_{j\in\eta(\mathcal{I})}v_j \log \ell_j.
\end{align*}
When $\mu_{\bm{p}}$ has homogeneous columns, for $\bm{v}\in\eta(\mathcal{P})$,
\begin{align*}
    T_{\bm{p}}(\bm{v},q) =\frac{(q-1)\ell(\bm{v})}{\chi_2(\bm{v})}
\end{align*}
and therefore
\begin{align*}
    \vartheta_{\bm{p}}(q) &= \min_{\bm{v}\in\eta(\mathcal{P})} \left\{ \frac{q \CH{\bm{v}}{\bm{p}}-H(\bm{v})}{\chi_1(\bm{v})} + T_{\bm{p}}(\bm{v},q) \right\}.
\end{align*}

Now, consider the special case that the IFS in addition has two columns.
We reparametrize $\eta(\mathcal{P})$ as $[0,1]$ through the map $v\mapsto \bm{v} \coloneqq (v, 1-v)$.
In this case, we know that
\begin{equation*}
    f_{\bm{p}}(\alpha) = \max_{v\in[0,1]} \left\{ \frac{H(\bm{v})}{\chi_1(\bm{v})} + T^*_{\bm{p}}\left(\bm{v},\alpha-\frac{\CH{\bm{v}}{\bm{p}}}{\chi_1(\bm{v})}\right) \right\}.
\end{equation*}
Since $T_{\bm{p}}(\bm{v}, q)$ is linear in $q$,
\begin{equation*}
    T^*_{\bm{p}}(\bm{v}, \rho) = \begin{cases}
        \frac{\ell(\bm{v})}{\chi_2(\bm{v})} &: \rho = \frac{\ell(\bm{v})}{\chi_2(\bm{v})},\\
        -\infty &: \text{ otherwise}.
    \end{cases}
\end{equation*}
Alternatively, write
\begin{equation}\label{e:AD-def}
    A_{\bm{p}}(\bm{v}) = \frac{\CH{\bm{v}}{\bm{p}}}{\chi_1(\bm{v})} + \frac{\ell(\bm{v})}{\chi_2(\bm{v})}\qquad D(\bm{v})=\frac{H(\bm{v})}{\chi_1(\bm{v})} + \frac{\ell(\bm{v})}{\chi_2(\bm{v})}.
\end{equation}
Then, we equivalently have
\begin{align*}
    \vartheta_{\bm{p}}(q) &= \min_{v\in[0,1]} \left\{ q\cdot A_{\bm{p}}(\bm{v}) - D(\bm{v})\right\},\\
    f_{\bm{p}}(\alpha) &= \max_{v\in[0,1]} \left\{ D(\bm{v}): A_{\bm{p}}(\bm{v}) = \alpha\right\}.
\end{align*}
In particular, $f_{\bm{p}}(\alpha)$ is the upper hull of the continuous curve $v\mapsto \bigl(A_{\bm{p}}(\bm{v}), D(\bm{v})\bigr)$ for $v\in[0,1]$.
See \cref{f:exp} for a depiction of the function $v\mapsto\bigl(A_{\bm{p}}(\bm{v}), D(\bm{v})\bigr)$.

Let us introduce some more notation for working with Gatzouras--Lalley measures with two homogeneous columns.
First, abbreviate $A_p(v)=A_{\bm{p}}(\bm{v})$ and $D(v)=D(\bm{v})$, where $\bm{p}=(p,1-p)$ and $\bm{v}=(v,1-v)$.
Here is some more shorthand:
\begin{enumerate}[nl]
    \item Write $\eta(\mathcal{I}) = \{1,2\}$.
    \item Let $I_p = A_p([0,1])= [\alpha_{\min}(p),\alpha_{\max}(p)]$ denote the support of $f_{\bm{p}}$.
    \item For $j\in\{1,2\}$, let $\phi_j = \frac{\log\ell_j}{\log(1/b_j)}$ denote the dimension of column $j$.
    \item For each $\alpha\in I_p$, define
        \begin{equation*}
            Q_\alpha(v)\coloneqq\chi_1(\bm{v})\chi_2(\bm{v})\bigl(A_p(v)-\alpha\bigr),
        \end{equation*}
        which is a polynomial of degree at most two.
    \item Let
        \begin{equation*}
            \omega \coloneqq \log(1/b_1)\log(1/a_2)-\log(1/b_2)\log(1/a_1)
        \end{equation*}
        measure the difference in eccentricity between $0$ and $1$.
\end{enumerate}
We note the following degeneracy property of the multifractal spectrum.
\begin{lemma}\label{l:const-char}
    Let $\mu_{\bm{p}}$ be a Gatzouras--Lalley measure with two homogeneous columns.
    The following are equivalent:
    \begin{enumerate}[nl,a]
        \item\label{i:a1} $I_p$ is not a singleton.
        \item\label{i:a2} $\#A_p^{-1}(\alpha) \leq 2$ for all $\alpha\in I_p$.
        \item\label{i:a3} either $A_p(0) \neq A_p(1)$, or $A_p(0) = A_p(1)$, $\phi_1 \neq \phi_2$, and $\omega \neq 0$.
    \end{enumerate}
\end{lemma}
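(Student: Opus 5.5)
The plan is to reduce everything to the polynomial $Q_\alpha$, using that $\chi_1(\bm{v})$, $\chi_2(\bm{v})$, $\CH{\bm{v}}{\bm{p}}$ and $\ell(\bm{v})$ are all affine in $v$. Concretely,
\begin{equation*}
    Q_\alpha(v)=\chi_2(\bm{v})\CH{\bm{v}}{\bm{p}}+\chi_1(\bm{v})\ell(\bm{v})-\alpha\chi_1(\bm{v})\chi_2(\bm{v}),
\end{equation*}
which is a polynomial of degree at most two. Since $\chi_1,\chi_2>0$ on $[0,1]$, the set $A_p^{-1}(\alpha)$ is exactly the zero set of $Q_\alpha$ in $[0,1]$. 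Hence $\#A_p^{-1}(\alpha)\le 2$ unless $Q_\alpha\equiv 0$, i.e.\ unless $A_p\equiv\alpha$ on $[0,1]$.

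\emph{Equivalence of \cref{i:a1} and \cref{i:a2}.} Because $A_p$ is continuous, $I_p=A_p([0,1])$ is a singleton if and only if $A_p$ is constant. If $A_p$ is not constant, then no $Q_\alpha$ vanishes identically, so \cref{i:a2} holds. If $A_p$ is constant, then its unique value $\alpha$ has $A_p^{-1}(\alpha)=[0,1]$, so \cref{i:a2} fails.

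\emph{Reduction of \cref{i:a3}.} It remains to characterize when $A_p$ is constant. If $A_p(0)\neq A_p(1)$, then $A_p$ is certainly not constant. So assume $A_p(0)=A_p(1)=\alpha$. Then $Q_\alpha$ vanishes at $0$ and $1$, so $Q_\alpha(v)=c\,v(1-v)$ for some $c\in\R$, and $A_p$ is constant exactly when the $v^2$-coefficient $c$ of $Q_\alpha$ is zero. Thus the lemma reduces to the claim that, under $A_p(0)=A_p(1)$, we have $c=0$ if and only if $\phi_1=\phi_2$ or $\omega=0$.

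\emph{Computing $c$.} Write $A_j=\log(1/a_j)$, $B_j=\log(1/b_j)$, $L_j=\log\ell_j$ and $P_j=\log(1/p_j)$, and expand each affine function as, for example, $\chi_1(\bm{v})=A_2+v(A_1-A_2)$. The $v^2$-coefficient is
\begin{equation*}
    c=(B_1-B_2)(P_1-P_2)+(A_1-A_2)(L_1-L_2)-\alpha(A_1-A_2)(B_1-B_2).
\end{equation*}
The endpoint conditions $\alpha=A_p(0)=A_p(1)$ read $\alpha=P_j/A_j+\phi_j$ for $j=1,2$, so $P_j=A_j(\alpha-\phi_j)$. Substituting this and $L_j=B_j\phi_j$ into $c$, the $\alpha$-terms cancel. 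A direct expansion then gives
\begin{equation*}
    c=(\phi_2-\phi_1)(B_1A_2-B_2A_1)=(\phi_2-\phi_1)\,\omega.
\end{equation*}
This factorization proves the claim.

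The only genuine obstacle is this algebraic simplification: one has to use the endpoint equalities to eliminate the $P_j$ before the expression factors. The rest of the argument is bookkeeping about zeros of a quadratic polynomial.
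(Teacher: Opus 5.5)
Your proof is correct and follows the paper's argument. Both reduce to the zero set of the quadratic $Q_\alpha$, and both identify $Q_{\alpha_0}(v)=(\phi_1-\phi_2)\,\omega\,v(1-v)$ when $A_p(0)=A_p(1)=\alpha_0$; your elimination of the $P_j$ fills in the computation the paper leaves to the reader. One harmless slip: if $Q_\alpha(v)=c\,v(1-v)$, then the $v^2$-coefficient is $-c$, not $c$, which is why your $c=(\phi_2-\phi_1)\omega$ has the opposite sign to the paper's; this does not matter, since only whether $c$ vanishes is used.
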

\begin{proof}
    Since $\chi_1(\bm{v})\chi_2(\bm{v})$ is positive, either $Q_\alpha(v)$ vanishes identically so $A_p\equiv \alpha$, or there are at most two solutions to $Q_\alpha(v) = 0$.
    This shows that \cref{i:a1} and \cref{i:a2} are equivalent.

    Next, if $A_p(0) = A_p(1) \eqqcolon \alpha_0$, then
    \begin{equation*}
        Q_{\alpha_0}(v) = (\phi_1-\phi_2)\cdot\omega\cdot v(1-v)
    \end{equation*}
    which vanishes if and only if $\phi_1=\phi_2$ or $\omega =0$.
    Thus \cref{i:a1} and \cref{i:a3} are equivalent.
\end{proof}
Finally, we point out that the function $A_p$ is necessarily either quasi-concave or quasi-convex.
This follows immediately from the following slightly more general result.
\begin{lemma}
    Let $W\subset\R$ be a compact interval and suppose $f\colon W \to\R$ is a function of the form
    \begin{equation*}
        f = \frac{g_1}{h_1} + \frac{g_2}{h_2}
    \end{equation*}
    where the functions $g_i, h_i$ are affine and the functions $h_i$ are strictly positive on $W$.
    Then $f$ is either quasi-concave or quasi-convex.
\end{lemma}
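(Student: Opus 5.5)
Since $W$ is connected, $f$ is quasi-concave or quasi-convex exactly when every level set $f^{-1}(c)$ splits $W$ in the right way. It is therefore enough to show that $f-c$ changes sign at most once on $W$, or that $f$ has no interior strict local maximum together with an interior strict local minimum in the wrong order. The approach is to clear denominators. Since $h_1h_2>0$ on $W$, for each $c\in\R$ the sign of $f-c$ agrees with the sign of
\begin{equation*}
    P_c(x) \coloneqq g_1(x)h_2(x)+g_2(x)h_1(x)-c\,h_1(x)h_2(x),
\end{equation*}
which is a polynomial of degree at most two in $x$. The claim will follow from a careful look at the structure of this one-parameter family of quadratics, in the same spirit as the argument in \cref{l:const-char}.

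Step one is the reduction. A function on an interval fails to be quasi-concave exactly when there are $x<y<z$ with $f(y)<\min\{f(x),f(z)\}$. It fails to be quasi-convex exactly when there are $x'<y'<z'$ with $f(y')>\max\{f(x'),f(z')\}$. Suppose for contradiction that both failures occur. Then $f$ attains an interior strict local minimum value $m$ and an interior strict local maximum value $M$ at distinct points. The sign-change structure of $P_c$ is controlled by its roots, and a nonzero quadratic has at most two of them. So I will count how many times the horizontal line at a suitable level $c$ crosses the graph. I also need the derivative fact: $f'(x)=0$ if and only if $g_1'h_1-g_1h_1'$ over $h_1^2$ plus the analogous term vanishes. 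Writing $a_i\coloneqq g_i'h_i-g_ih_i'$, which is a constant because $g_i$ and $h_i$ are affine, the critical-point equation becomes
\begin{equation*}
    a_1h_2^2+a_2h_1^2=0.
\end{equation*}
This is a quadratic in $x$. When $a_1a_2\ge 0$ it has no solutions unless it vanishes identically, and then $f$ is monotone. When $a_1a_2<0$ it factors as $(\sqrt{|a_1|}h_2-\sqrt{|a_2|}h_1)(\sqrt{|a_1|}h_2+\sqrt{|a_2|}h_1)=0$. The second factor is strictly positive on $W$, so $f$ has at most one critical point in $W$ unless the first factor vanishes identically, in which case $f'\equiv 0$.

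Step two concludes. If $f$ is constant, or has no interior critical point, then $f$ is monotone and hence both quasi-concave and quasi-convex. Otherwise $f'$ has exactly one zero $x_0$ in the interior, which is a simple zero because the first factor is affine and non-constant. So $f'$ changes sign exactly once, and $f$ is either increasing then decreasing, which makes it quasi-concave, or decreasing then increasing, which makes it quasi-convex. The step I expect to need the most care is the degenerate bookkeeping: the cases $a_1=0$ or $a_2=0$, and the case where the affine factor vanishes identically. In each of these $f'$ has constant sign or is identically zero, and I will dispatch them directly. The quadratic $P_c$ is then not needed beyond motivation; the derivative factorization carries the argument.
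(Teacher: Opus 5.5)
Your argument is correct, but it takes a different route from the paper's.

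\emph{The paper's route.} It avoids calculus. If $h_1$ is a positive multiple of $h_2$, then $f$ is a single affine-over-positive-affine ratio and \cref{l:qc-const} applies directly. Otherwise $h_1,h_2$ span the affine functions, so each $g_i$ can be written in that basis. Setting $\psi=h_1/h_2$ then gives $f=\xi\circ\psi$ with $\xi(p)=(ap^2+bp+c)/p$ for $p>0$. By \cref{l:qc-const}, $\xi$ is quasi-convex or quasi-concave according to the sign of $a$. This transfers to $f$ because $\psi$ is monotone on $W$.

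\emph{Your route.} You observe that $g_i'h_i-g_ih_i'$ is constant, so $h_1^2h_2^2f'=a_1h_2^2+a_2h_1^2$. When $a_1a_2<0$, the difference-of-squares factorization shows that $f'$ has the sign of a single affine function on $W$. Hence $f'$ changes sign at most once, and $f$ is monotone, up-then-down, or down-then-up.

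\emph{Comparison.} Your proof is self-contained and needs no case split on whether $h_1$ and $h_2$ are proportional. The paper's proof reuses the quasi-concavity lemma already established, and reads off the type from the sign of the leading coefficient $a$.

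\emph{One correction.} Delete the framing in your first paragraph and in Step one rather than keeping it as motivation. "It suffices to show $f-c$ changes sign at most once for each $c$" is a condition that forces $f$ to be monotone. That is false in this class: for example $f(x)=x+1/x$ on $[1/2,2]$ fits the hypotheses and is not monotone. The contradiction set-up in Step one is also never used, and the extrema it produces need not be strict. Step two alone is the proof.
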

\begin{proof}
    If $h_1 = c h_2$ for some constant $c > 0$, this result follows from \cref{l:qc-const} (in fact, $f$ is both quasi-concave and quasi-convex).
    Otherwise, $\{h_1,h_2\}$ spans the space of all affine functions, so with $\psi=h_1/h_2$ we can write
    \begin{equation*}
        f = \xi\circ\psi\qquad\text{where}\qquad \xi(p)=\frac{ap^2+bp+c}{p},\quad p>0,
    \end{equation*}
    for some $a,b,c\in\R$.
    By \cref{l:qc-const}, $\xi$ is quasi-convex if $a\geq 0$ and quasi-concave if $a\leq 0$.
    Since $\psi$ is monotone, the same holds for $f=\xi\circ\psi$.
\end{proof}
\begin{remark}\label{r:mini}
    If $A_p$ is non-degenerate (i.e.\ it satisfies one of the equivalent conditions in \cref{l:const-char}), it must be strictly quasi-convex or strictly quasi-concave, which means that one of the global maximum or the global minimum must be attained at the boundary (i.e.\ at $0$ or $1$).
\end{remark}

\subsection{Discontinuity of the multifractal spectrum}
In this section, we construct examples of Gatzouras--Lalley measures with discontinuous multifractal spectrum.

We begin with a general characterization of (dis)continuity of the multifractal spectrum.
\begin{lemma}\label{p:disc}
    Let $\mu_{\bm{p}}$ be a Gatzouras--Lalley measure with separation of principal projections and with two homogeneous columns.
    Let $\alpha_0 \in I_p$.
    Then $f_{\bm{p}}|_{I_p}$ is discontinuous at $\alpha_0$ if and only if $A_p^{-1}(\alpha_0) = \{v, z\}$ for some $z\in\{0,1\}$ and $v\in(0,1)$ such that $D(z) > D(v)$.
\end{lemma}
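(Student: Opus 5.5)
Throughout, I would use the representation from \cref{ss:two-col-special},
\begin{equation*}
    f_{\bm{p}}(\alpha) = \max\{D(v) : v\in[0,1],\ A_p(v)=\alpha\},\qquad \alpha\in I_p,
\end{equation*}
which holds by \cref{it:two-col-var}. Here $A_p$ and $D$ are continuous on $[0,1]$. The degenerate case where $I_p$ is a singleton is trivial: $f_{\bm{p}}|_{I_p}$ is then continuous and $A_p^{-1}(\alpha_0)=[0,1]$ is not of the stated form. So I assume $I_p$ is non-degenerate. By \cref{l:const-char}, every fibre $A_p^{-1}(\alpha)$ then has at most two points, and by \cref{r:mini} $A_p$ is strictly quasi-convex or strictly quasi-concave. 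By symmetry I treat the strictly quasi-convex case, so $A_p$ decreases on $[0,v_*]$ and increases on $[v_*,1]$ for some $v_*$. Since $A_p$ is continuous on a compact set, the graph of $A_p$ is closed, so $f_{\bm{p}}$ is automatically upper semicontinuous on $I_p$. Discontinuity at $\alpha_0$ is therefore equivalent to the existence of $\alpha_n\to\alpha_0$ in $I_p$ with $\limsup_n f_{\bm{p}}(\alpha_n)<f_{\bm{p}}(\alpha_0)$.

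For the "if" direction, suppose $A_p^{-1}(\alpha_0)=\{v,z\}$ with, say, $z=1$, $v\in(0,1)$ and $D(1)>D(v)$. Strict quasi-convexity with two preimages forces the endpoint $0$ to lie in the other monotone branch from $1$, and $\alpha_0=A_p(1)$ is strictly less than $A_p(0)$ unless $A_p(0)=A_p(1)$. The latter is excluded because the fibre would then be $\{0,1\}$, which is not of the stated form. Hence $A_p(1)<A_p(0)$, so $\alpha_0$ is not the maximum of $I_p$. For $\alpha_n\downarrow\alpha_0$, the value $\alpha_n$ has no preimage near $1$, since $A_p$ is increasing up to $1$ and $A_p(1)=\alpha_0<\alpha_n$. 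Its only preimage therefore lies on the branch through $v$ and converges to $v$, giving $f_{\bm{p}}(\alpha_n)\to D(v)<D(1)\le f_{\bm{p}}(\alpha_0)$. This is a jump.

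For the "only if" direction, take $\alpha_n\to\alpha_0$ with $\limsup_n f_{\bm{p}}(\alpha_n)<f_{\bm{p}}(\alpha_0)=D(w)$ for some $w\in A_p^{-1}(\alpha_0)$. If $w$ is interior and $A_p$ is strictly monotone near $w$, then $A_p$ maps a neighbourhood of $w$ onto a neighbourhood of $\alpha_0$. In that case each $\alpha_n$ has a preimage $w_n\to w$, so $f_{\bm{p}}(\alpha_n)\ge D(w_n)\to D(w)$, a contradiction. If $w=v_*$ is the interior minimum, then $\alpha_0=\min I_p$ and the $\alpha_n$ approach only from above, where preimages near $v_*$ on either side still exist, so the same contradiction follows. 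Hence $w=z\in\{0,1\}$ is an endpoint and the approach is from the side that $A_p$ does not reach near $z$. If $z$ were the only preimage of $\alpha_0$, then $\alpha_0$ would be an endpoint of $I_p$ with all $\alpha_n$ approaching from inside, and preimages near $z$ would exist, a contradiction. Therefore $\#A_p^{-1}(\alpha_0)=2$, with other preimage $v$. The approximating preimages must converge into $A_p^{-1}(\alpha_0)$, so they converge to $v$, which forces $D(v)<D(z)$. It remains to rule out $v$ being the other endpoint: if $A_p^{-1}(\alpha_0)=\{0,1\}$, then both endpoints approach $\alpha_0$ from the same side and a one-sided approach is available near each. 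This gives $v\in(0,1)$.

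The main obstacle I expect is the careful case analysis around one-sided neighbourhoods. One must track, for each preimage, from which side $A_p$ attains nearby values, using strict quasi-monotonicity, and in particular handle the interior extremum $v_*$ and the case $A_p(0)=A_p(1)$. I would organise this by first proving a small claim: for $w\in A_p^{-1}(\alpha_0)$ with $w\ne0,1$, every $\alpha$ near $\alpha_0$ in $I_p$ has a preimage near $w$. The result then follows from this claim together with upper semicontinuity.
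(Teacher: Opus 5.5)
Your proof is correct, but it is organised differently from the paper's. The paper splits $[0,1]$ at the interior extremum $c$ into two strictly monotone pieces. Their images $J_1\subseteq J_2=I_p$ are nested intervals sharing the endpoint $A_p(c)$, and the pieces have continuous inverse branches $v_1,v_2$. This gives the global piecewise formula $f_{\bm{p}}=\max\{D\circ v_1,D\circ v_2\}$ on $J_1$ and $f_{\bm{p}}=D\circ v_2$ on $I_p\setminus J_1$. Continuity is then automatic except at the endpoint of $J_1$ lying in the interior of $I_p$, where the one-sided limits $\max\{D(z),D(v)\}$ and $D(v)$ can be read off directly.

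You instead note that $f_{\bm{p}}$ is upper semicontinuous for free, since $A_p$ and $D$ are continuous on a compact set. Only lower semicontinuity is then at stake, and this becomes a local question: do values near $\alpha_0$ have preimages accumulating at a maximiser $w\in A_p^{-1}(\alpha_0)$? You answer it case by case: interior non-extremal point, interior extremum, and endpoint with one or two preimages.

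The paper's version is shorter and yields an explicit description of $f_{\bm{p}}$ and of the size of the jump. Yours uses the two-branch structure only locally and shows the discontinuity is just a failure of local surjectivity of $A_p$ at the maximiser. That reasoning would carry over to more general continuous $A$ and $D$ on compact parameter sets, at the cost of more case analysis.

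Two of your claims are stated without justification, though both are true and easy. First, a unique preimage $z\in\{0,1\}$ forces $\alpha_0$ to be an endpoint of $I_p$. Second, when $A_p^{-1}(\alpha_0)=\{0,1\}$, the values of $A_p$ near both endpoints lie on the same side of $\alpha_0$. Both follow because $A_p-\alpha_0$ has constant sign on the connected set $[0,1]\setminus A_p^{-1}(\alpha_0)$.
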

\begin{proof}
    If $I_p$ is a singleton or $A_p$ is strictly monotone, the assertion is immediate.
    Otherwise, by \cref{l:const-char,r:mini}, $A_p$ attains a global minimum or maximum at some $c\in(0,1)$, with strictly monotone restrictions to $[0,c]$ and $[c,1]$.

    Let $J_1,J_2$ be the images $A_p([0,c])$ and $A_p([c,1])$, labelled so that $J_1\subseteq J_2=I_p$, and let $v_i\colon J_i\to[0,1]$ be the corresponding continuous inverse branches.
    The intervals $J_1,J_2$ share the endpoint $A_p(c)$.
    Define the continuous functions $d_i=D\circ v_i$ on $J_i$.
    Then for $\alpha\in I_p$,
    \begin{equation*}
        f_{\bm{p}}(\alpha)=
        \begin{cases}
            \max\{d_1(\alpha),d_2(\alpha)\} &: \alpha\in J_1,\\
            d_2(\alpha) &: \alpha\in I_p\setminus J_1.
        \end{cases}
    \end{equation*}
    Thus $f_{\bm{p}}|_{I_p}$ is continuous except possibly at an endpoint $\alpha_0$ of $J_1$ lying in the interior of $I_p = J_2$.
    Precisely, $A_p^{-1}(\alpha_0)=\{z,v\}$ for $z\in\{0,1\}$ and $v\in(0,1)$: the first branch ends at $z=v_1(\alpha_0)$ while the second continues through $v=v_2(\alpha_0)$.
    The one-sided limits are
    \begin{equation*}
        \max\{D(z),D(v)\}=f_{\bm{p}}(\alpha_0) \qquad\text{and}\qquad D(v).
    \end{equation*}
    Hence discontinuity occurs exactly when $D(z)>D(v)$.
\end{proof}
Of course, \cref{p:disc} is not sufficient to give any concrete examples of Gatzouras--Lalley measures with discontinuous multifractal spectrum.
In fact, there are two cases where it is easy to see that $\vartheta_{\bm{p}}$ is differentiable and the multifractal formalism holds: firstly, if the eccentricity
$\chi_1(\bm{w})/\chi_2(\bm{w})$ is constant (equivalently, $\omega = 0$) so we may apply \cref{c:homogeneous}; and secondly, if $\phi_1 = \phi_2\eqqcolon \phi$ in which case $t_{\bm{p},1}(q) = t_{\bm{p},2}(q) = \phi(q-1)$ so we may apply \cref{c:lq-equal}.

It turns out that if neither of these two conditions hold, then there is always an open set of probability vectors in $\eta(\mathcal{P})$ for which the multifractal formalism fails.
\begin{theorem}\label{t:exc}
    Let $\mathcal{G}$ be a Gatzouras--Lalley carpet with separation of principal projections and with two homogeneous columns.
    Suppose $\mathcal{G}$ is not homogeneous and $\phi_1 \neq \phi_2$.
    Then, there exists a non-empty open subset $U\subset\eta(\mathcal{P})$ such that $f_{\bm{p}}$ has a jump discontinuity in the interior of its support for all $\bm{p}\in U$.
\end{theorem}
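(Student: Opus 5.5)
The plan is to reduce everything to \cref{p:disc}: it suffices to find an open interval of $p\in(0,1)$ for which there is an $\alpha_0$ in the interior of $I_p$ with $A_p^{-1}(\alpha_0)=\{z,v\}$, where $z\in\{0,1\}$, $v\in(0,1)$ and $D(z)>D(v)$. The endpoint values of $D$ are explicit. At $v=1$ only column $1$ is used, so $H(\bm{v})=0$ and $D(1)=\ell(\bm{v})/\chi_2(\bm{v})=\phi_1$; similarly $D(0)=\phi_2$. Since $\phi_1\neq\phi_2$, we have $D(0)\neq D(1)$, and by symmetry I assume $\phi_1>\phi_2$, i.e.\ $D(1)>D(0)$.

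\emph{Step 1: a degenerate parameter.} The endpoint values of $A_p$ are
\begin{equation*}
    A_p(1)=\frac{\log(1/p)}{\log(1/a_1)}+\phi_1,\qquad A_p(0)=\frac{\log(1/(1-p))}{\log(1/a_2)}+\phi_2.
\end{equation*}
The difference $A_p(1)-A_p(0)$ is continuous and strictly decreasing in $p\in(0,1)$. It tends to $+\infty$ as $p\to0$ and to $-\infty$ as $p\to1$, so there is a unique $p_0$ with $A_{p_0}(0)=A_{p_0}(1)=:\alpha_*$, and the difference changes sign at $p_0$. By the computation in \cref{l:const-char},
\begin{equation*}
    Q_{\alpha_*}(v)=(\phi_1-\phi_2)\,\omega\,v(1-v).
\end{equation*}
Since $\phi_1\ne\phi_2$ and $\omega\neq0$ (the carpet is not homogeneous), $A_{p_0}-\alpha_*$ has a strict sign on $(0,1)$. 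Hence $A_{p_0}$ has a strict interior extremum, either a maximum or a minimum according to the sign of $(\phi_1-\phi_2)\omega$, and equal endpoint values.

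\emph{Step 2: perturb.} Suppose the interior extremum is a maximum; the minimum case is symmetric with inequalities reversed. For $p$ near $p_0$, continuity of $(p,v)\mapsto A_p(v)$ and of its $v$-derivative keeps $A_p$ strictly quasi-concave (\cref{r:mini}) with an interior maximum at some $c_p$ near the old maximiser. Choose the side of $p_0$ on which $A_p(1)<A_p(0)$; this is an open interval $(p_0,p_0+\epsilon)$ or $(p_0-\epsilon,p_0)$. On it, set $\alpha_0=A_p(1)$. Then $A_p^{-1}(\alpha_0)=\{1,v_p\}$, where $v_p\in(0,c_p)$ lies on the increasing branch, and $\alpha_0$ lies in the interior of $I_p=[A_p(1),\max A_p]$. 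As $p\to p_0$ we have $A_p(1)-A_p(0)\to0$, so $v_p\to0$ and $D(v_p)\to D(0)=\phi_2<\phi_1=D(1)$. Shrinking $\epsilon$ gives $D(1)>D(v_p)$ on the whole interval, and \cref{p:disc} yields a jump discontinuity at an interior point of the support. Had we instead needed $D(0)>D(1)$, we would take the opposite side of $p_0$ and $z=0$; the freedom to choose the side is exactly what makes this work. The set $U=\{(p,1-p)\}$ over this interval is open and non-empty.

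\emph{Main obstacle.} The delicate point is the uniform control near $p_0$. We need $A_p$ to stay non-degenerate, with its interior extremum not escaping to an endpoint, and we need $v_p\to0$. I would handle this by noting that $Q_{A_p(1)}(v)$ is a quadratic whose coefficients depend continuously on $p$ and converge to $(\phi_1-\phi_2)\omega\,v(1-v)$. Its two roots therefore converge to $0$ and $1$, one of them being exactly $1$, which gives both claims.
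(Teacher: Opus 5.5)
Your strategy is the paper's: find the unique $p_0$ with $A_{p_0}(0)=A_{p_0}(1)$, and use $Q_{\alpha_*}=(\phi_1-\phi_2)\omega v(1-v)$ to see that this level is attained only at $\{0,1\}$. Then perturb $p$ so that the level $A_p(1)$ picks up a second preimage $v_p$ near $0$ with $D(v_p)<D(1)$, and invoke \cref{p:disc}. However, Step~2 perturbs to the wrong side of $p_0$.

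Suppose $A_p$ has an interior maximum at $c_p$. Then it increases on $[0,c_p]$ starting from the value $A_p(0)$, and decreases on $[c_p,1]$ down to $A_p(1)$. So if $A_p(1)<A_p(0)$, then $A_p(v)>A_p(1)$ for all $v\in[0,1)$. Hence $A_p^{-1}(A_p(1))=\{1\}$, there is no $v_p$ on the increasing branch, and $A_p(1)=\min A_p$ is the left endpoint of $I_p$. Your own formula $I_p=[A_p(1),\max A_p]$ already contradicts the claim that $\alpha_0=A_p(1)$ is interior, so \cref{p:disc} gives nothing on the interval you chose.

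The root-tracking argument in your last paragraph does not detect this. Write $Q_{A_p(1)}(v)=\theta_p(v-1)(v-r_p)$, with $\theta_p<0$ for $p$ near $p_0$ in the maximum case. Then $\theta_p r_p=Q_{A_p(1)}(0)=\log(1/a_2)\log(1/b_2)\bigl(A_p(0)-A_p(1)\bigr)$, which is positive on your side, so $r_p<0$. Knowing $r_p\to0$ says nothing about which side of $0$ the root lies on.

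The repair is to take the other side. In the maximum case this is the side where $A_p(1)>A_p(0)$; in the minimum case it is the side where $A_p(1)<A_p(0)$. In both cases $A_p(1)$ lies strictly between $A_p(0)$ and the interior extreme value. Then $r_p\in(0,1)$, $r_p\to 0$, and $A_p(1)$ is interior to $I_p$ (e.g.\ $I_p=[A_p(0),\max A_p]$ in the maximum case), so the rest of your argument goes through. For the parameters of \cref{c:excp}, $(\phi_1-\phi_2)\omega>0$ gives an interior maximum, and this is the side $p<p_0$ used there.

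The paper's formulation avoids both the sign trap and the max/min case split. It fixes $\delta$ with $D<D(1)$ on $[0,\delta]$, notes $A_{p_0}(\delta)\ne A_{p_0}(1)$, and takes the side of $p_0$ on which $A_p(1)$ lies strictly between $A_p(0)$ and $A_p(\delta)$. The intermediate value theorem then directly gives $v_p\in(0,\delta)$, and \cref{l:const-char} gives $A_p^{-1}(A_p(1))=\{v_p,1\}$. There is no need to track $c_p$ or the roots of $Q$.

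Separately, persistence of strict quasi-concavity near $p_0$ is better justified by continuity of the $v$-derivative alone. Instead, note that $A_p(c)>\max\{A_p(0),A_p(1)\}$ persists by continuity, where $c$ is the maximiser of $A_{p_0}$. This rules out quasi-convexity, and the two-ratio lemma then forces quasi-concavity.
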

\begin{proof}
    Exchanging the columns if necessary, assume that $D(1)=\phi_1>\phi_2=D(0)$.
    The function
    \begin{equation*}
        \Delta(p)\coloneqq A_p(1)-A_p(0)=-\frac{\log p}{\log(1/a_1)}+\frac{\log(1-p)}{\log(1/a_2)}+\phi_1-\phi_2
    \end{equation*}
    decreases strictly from $\infty$ to $-\infty$ as $p$ increases from $0$ to $1$.
    Thus there is a unique $p_0\in(0,1)$ such that $A_{p_0}(0)=A_{p_0}(1)\eqqcolon\alpha_0$.
    Since $\phi_1\ne\phi_2$ and $\omega\ne0$, \cref{l:const-char} gives $A_{p_0}^{-1}(\alpha_0)=\{0,1\}$.

    Choose $\delta\in(0,1)$ sufficiently small that $D(v)<D(1)$ for all $v\in[0,\delta]$.
    Then $A_{p_0}(\delta)\ne\alpha_0$.
    Since $\Delta$ changes sign at $p_0$, continuity gives a non-empty open interval $J\subset(0,1)$ with endpoint $p_0$ such that $A_p(1)$ lies strictly between $A_p(0)$ and $A_p(\delta)$ for every $p\in J$.
    The intermediate value theorem therefore gives $v_p\in(0,\delta)$ with $A_p(v_p)=A_p(1)$.
    By \cref{l:const-char}, $A_p^{-1}(A_p(1))=\{v_p,1\}$, and $D(v_p)<D(1)$ by the choice of $\delta$.
    Hence \cref{p:disc} gives a jump discontinuity at $A_p(1)$, which lies in the interior of $I_p$.
    Taking $U=\{(p,1-p):p\in J\}$ completes the proof.
\end{proof}
The hypotheses of \cref{t:exc} hold for most Gatzouras--Lalley carpets with two homogeneous columns.
The following result therefore immediately follows.
\begin{corollary}\label{c:excp}
    There exists a Gatzouras--Lalley measure in $\R^2$ generated by an IFS with 3 maps such that the multifractal spectrum is discontinuous, the fine $L^q$-spectrum is non-differentiable, and the multifractal formalism fails.
\end{corollary}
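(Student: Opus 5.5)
We will realize \cref{c:excp} by applying \cref{t:exc} to an explicit carpet with two homogeneous columns: column $1$ containing two maps and column $2$ containing a single map, so that $\#\mathcal{I}=3$. Concretely, take $\ell_1=2$, $\ell_2=1$, so $\phi_1=\log 2/\log(1/b_1)>0=\phi_2$. We choose contractions $0<b_j<a_j<1$ with $a_1+a_2<1$ and $2b_1<1$, placing the two column-$1$ rectangles at heights $[0,b_1]$ and $[1-b_1,1]$ inside $[0,a_1]\times[0,1]$, and the single column-$2$ rectangle inside $[1-a_2,1]\times[0,1]$. Finally, we pick the ratios so that $\log a_1/\log b_1\neq \log a_2/\log b_2$; this gives $\omega\neq0$, so the carpet is not homogeneous. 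For instance $a_1=1/3$, $b_1=1/4$, $a_2=1/2$, $b_2=1/3$ should work (these should be checked against the strict-separation inequalities).

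The first step is to verify the Gatzouras--Lalley axioms and separation of principal projections. Since $a_1+a_2<1$, the two column projections $[0,a_1]$ and $[1-a_2,1]$ are disjoint closed intervals. This gives $S_{\underline1}(\eta(\Lambda))\cap S_{\underline2}(\eta(\Lambda))=\varnothing$, and also separates maps lying in different columns. Within column $1$ the two images are vertically separated closed rectangles because $2b_1<1$, so $T_i(\Lambda)\cap T_j(\Lambda)=\varnothing$. Hence \cref{d:spp} holds. The hypotheses of \cref{t:exc} ($\omega\ne0$, $\phi_1\ne\phi_2$) also hold.

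\Cref{t:exc} now gives an open set of $\bm p\in\eta(\mathcal{P})$. We lift such a $\bm p$ to $\mathcal{P}$ by splitting column $1$'s mass equally between its two maps, so that $\mu_{\bm p}$ has homogeneous columns and $f_{\bm p}$ has a jump discontinuity at some $\alpha_0$ in the interior of its support.

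It remains to deduce non-differentiability of $\vartheta_{\bm p}$ and failure of the formalism.
\begin{itemize}
\item By \cref{p:spec-upper}, $\vartheta_{\bm p}^*\geq f_{\bm p}$.
\item $\vartheta_{\bm p}^*$ is concave and finite on the support interval, hence continuous on its interior.
\item Near $\alpha_0$ the one-sided limit of $f_{\bm p}$ is $D(v)<f_{\bm p}(\alpha_0)\leq\vartheta_{\bm p}^*(\alpha_0)$. Continuity of $\vartheta_{\bm p}^*$ then forces $f_{\bm p}(\alpha)<\vartheta_{\bm p}^*(\alpha)$ for $\alpha$ near $\alpha_0$ on the appropriate side.
\end{itemize}
If $\vartheta_{\bm p}$ were differentiable everywhere, \cref{ic:diff} together with the endpoint remark would give $f_{\bm p}=\vartheta_{\bm p}^*$ on the whole image of $\vartheta_{\bm p}'$. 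By concavity and continuity of the derivative, that image is the full interval between the asymptotic slopes, i.e.\ the support. This contradicts the strict inequality just obtained, so $\vartheta_{\bm p}$ is non-differentiable.

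The main obstacle is only bookkeeping. We must ensure that the chosen numbers satisfy all the strict separation inequalities, and that the jump point and the failure point lie in the interior of the support, which \cref{t:exc} already guarantees.
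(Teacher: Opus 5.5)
Your proposal is correct and follows the paper's route. The paper also applies \cref{t:exc} to a three-map carpet with two homogeneous columns and $\ell_1=2$, $\ell_2=1$; it takes $a_1=1/2$, $a_2=1/4$, $b_1=b_2=1/5$ and exhibits $p=0.717$ numerically via \cref{p:disc}. Your choice $a_1=1/3$, $b_1=1/4$, $a_2=1/2$, $b_2=1/3$ equally satisfies $b_j<a_j$, $a_1+a_2<1$, $2b_1<1$, $\omega\neq 0$ and $\phi_1\neq\phi_2$, and your explicit derivations of the failure of the formalism (continuity of the concave $\vartheta_{\bm p}^*$ at the interior jump) and of non-differentiability (via \cref{ic:diff}) are sound and fill in steps the paper leaves implicit.
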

\begin{proof}
    Take $a_1 = 1/2$ and $a_2=1/4$, $\ell_1 = 2$, $\ell_2 = 1$, and $b_1 = b_2 = 1/5$, and let $p < p_0 \approx 0.7170150212$ be sufficiently close to $p_0$, where $p_0$ is the positive root of the quadratic $p^2 +e^x p - e^x = 0$ for $x = 2(\log 2)^2/(\log 5)$.
    For example, a quick numerical calculation using \cref{p:disc} shows that $p = 0.717$ works.
\end{proof}
Using the main result of \cite{zbl:1228.37025}, we can also give an example of a Gatzouras--Lalley measure whose fine $L^q$ spectrum is non-differentiable at $0$.
This is a consequence of the following observation.
\begin{lemma}\label{l:distinct-slopes}
    Let $\Phi$ be a Gatzouras--Lalley IFS with separation of principal projections, two homogeneous columns, and two measures of maximal dimension.
    Then there exists an open set of values $p$ so that $\vartheta_{\bm{p}}$ is non-differentiable at $0$ and the multifractal formalism fails.
\end{lemma}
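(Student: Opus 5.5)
We work in the two-column homogeneous setting of \cref{ss:two-col-special}. For every $\bm{p}$ we have $\vartheta_{\bm{p}}(q)=\min_{v\in[0,1]}\{qA_p(v)-D(v)\}$, and at $q=0$ this is $-\max_v D(v)=-\dimH\Lambda$.

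\textbf{Step 1: the two maximisers.} The hypothesis of two measures of maximal dimension should translate into two distinct maximisers $v_1\neq v_2$ of $D$ on $[0,1]$. Each measure of maximal dimension is a Bernoulli measure realising $\max_{\bm{w}}v(\bm{w})$. The dimension formula reduces to an optimisation over columns, with uniform weights inside each column by \cref{c:col-dual} at $q=0$. Hence distinct measures of maximal dimension correspond to distinct column vectors, and the minimiser set $M(0)$ in the sense of \cref{l:duality} contains both $v_1$ and $v_2$. Note that $D$ does not depend on $p$. So the two maximisers are fixed, and $M(0)$ is the same set for every $p$.

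\textbf{Step 2: reduce to separating $A_p$ on the maximisers.} By \cref{l:duality}\cref{im:subdiff-char}, $\vartheta_{\bm{p}}$ is non-differentiable at $0$ if and only if $u=A_p$ is not constant on $M(0)$. It therefore suffices that $A_p(v_1)\neq A_p(v_2)$. Write
\begin{equation*}
    A_p(v)=D(v)+\frac{\CH{\bm{v}}{\bm{p}}-H(\bm{v})}{\chi_1(\bm{v})}.
\end{equation*}
Since $D(v_1)=D(v_2)$, we need
\begin{equation*}
    \frac{v_1\log(1/p)+(1-v_1)\log(1/(1-p))-H(\bm{v}_1)}{\chi_1(\bm{v}_1)}\neq\frac{v_2\log(1/p)+(1-v_2)\log(1/(1-p))-H(\bm{v}_2)}{\chi_1(\bm{v}_2)}.
\end{equation*}
Both sides are real-analytic in $p\in(0,1)$. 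Their difference is not identically zero: as $p\to0$ the coefficients of $\log(1/p)$ are $v_1/\chi_1(\bm{v}_1)$ and $v_2/\chi_1(\bm{v}_2)$. The map $v\mapsto v/\chi_1(\bm{v})$ is strictly monotone, being a Möbius map with nonzero determinant $\log(1/a_2)$. So these coefficients differ when $v_1\neq v_2$. Consequently the difference vanishes only on a discrete set, and its complement contains an open set of $p$. The endpoint case $v_i\in\{0,1\}$ is harmless because the coefficient argument still applies.

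\textbf{Step 3: failure of the formalism.} For such $p$ we have $\partial\vartheta_{\bm{p}}(0)=[\alpha_-,\alpha_+]$ with $\alpha_-<\alpha_+$, where $\alpha_\pm=A_p(v_{1,2})$ in some order. We use \cref{it:two-col-var} for an interior slope $\alpha\in(\alpha_-,\alpha_+)$. It gives $f_{\bm{p}}(\alpha)=\max\{D(v):A_p(v)=\alpha\}$. Since $v_1,v_2$ are the only global maximisers of $D$ and $A_p(v)=\alpha$ forces $v\notin\{v_1,v_2\}$, compactness gives $f_{\bm{p}}(\alpha)<\max D$. On the other hand $\vartheta_{\bm{p}}^*(\alpha)=\alpha\cdot0-\vartheta_{\bm{p}}(0)=\max D$, because $\alpha\in\partial\vartheta_{\bm{p}}(0)$. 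Hence $f_{\bm{p}}(\alpha)<\vartheta_{\bm{p}}^*(\alpha)$.

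\textbf{Main obstacle.} The delicate point is Step 1: making sure that two measures of maximal dimension really give two distinct maximisers $v$ of $D$, and that $D$ has no further maximisers. We only need finitely many isolated maximisers here, which holds since $D$ is a non-constant analytic function of $v$. If more than two exist, the argument of Step 2 works with any two of them, and Step 3 still works by choosing $\alpha$ strictly between consecutive values of $A_p$ on the maximiser set.
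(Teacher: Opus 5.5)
Your proof is correct. Steps 1 and 3 match the paper: you identify $M(0)$ with the fixed set of maximisers of $D$, apply \cref{l:duality}~\cref{im:subdiff-char}, and then combine \cref{it:two-col-var} with $\vartheta_{\bm{p}}^*(\alpha)=-\vartheta_{\bm{p}}(0)$ for $\alpha\in\partial\vartheta_{\bm{p}}(0)$. The genuine difference is in Step 2.

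The paper exhibits one explicit good parameter. Taking $a_1^s+a_2^s=1$ and $\bm{z}=(a_1^s,a_2^s)$ gives $\CH{\bm{v}}{\bm{z}}=s\chi_1(\bm{v})$, so $A_{\bm{z}}(v)=s+\ell(\bm{v})/\chi_2(\bm{v})$. This is a M\"obius function of $v$ and is strictly monotone because $\phi_1\neq\phi_2$. That fact must itself be deduced from non-uniqueness of the maximiser: if $\phi_1=\phi_2$, then $D$ is strictly quasi-concave. Continuity then gives a neighbourhood of $p=a_1^s$.

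You instead write $A_p(v_1)-A_p(v_2)=c_1\log(1/p)+c_2\log(1/(1-p))+c_3$ with $c_1=v_1/\chi_1(\bm{v}_1)-v_2/\chi_1(\bm{v}_2)\neq 0$. This blows up as $p\to0$, so it vanishes only on a discrete set. Your route needs no input about $\phi_1,\phi_2$ and gives a stronger conclusion. The derivative $-c_1/p+c_2/(1-p)$ has at most one zero, so there are at most two bad values of $p$. Hence non-differentiability at $0$ and failure of the formalism hold for all but finitely many $p$, not merely on some open set. The paper's route, in exchange, pinpoints a concrete and natural good parameter.

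One small point about your closing aside on more than two maximisers. Analyticity of $D$ only gives isolated maximisers inside $(0,1)$, since $H$ is not analytic at the endpoints. Finiteness does hold, because $D'(v)\to+\infty$ as $v\to0^+$ and $D'(v)\to-\infty$ as $v\to1^-$, which keeps all maximisers away from $\{0,1\}$. None of this is needed under the lemma's hypothesis of exactly two maximisers.
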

\begin{proof}
    Let $0 \leq v_1 < v_2 \leq 1$ be the global maximizers of $D(v)$.
    Let $s>0$ be chosen so that $a_1^s + a_2^s = 1$ and let $\bm{z} = (a_1^s, a_2^s)$.
    Then $\CH{\bm{v}}{\bm{z}}=s\chi_1(\bm{v})$ so that
    \begin{equation*}
        A_{\bm{z}}(v)=s+\frac{v\log\ell_1+(1-v)\log\ell_2}{v\log(1/b_1)+(1-v)\log(1/b_2)}.
    \end{equation*}
    Since there are two measures of maximal dimension, $\phi_1 \neq \phi_2$, and therefore $A_{\bm{z}}$ is strictly monotone so $A_{\bm{z}}(v_1)\neq A_{\bm{z}}(v_2)$.
    By continuity, $A_p(v_1) \neq A_p(v_2)$ for all $p$ in a small neighbourhood of $a_1^s$.
    For any such $p$, non-differentiability therefore follows by \cref{l:duality}~\cref{im:subdiff-char}.
    Since there are no other global maximizers, \cref{it:two-col-var,e:subdiff-rel} give $f_{\bm{p}}(\alpha)<D(v_1)=-\vartheta_{\bm{p}}(0)=\vartheta_{\bm{p}}^*(\alpha)$ for every $\alpha$ strictly between $A_p(v_1)$ and $A_p(v_2)$, so the multifractal formalism also fails.
\end{proof}
\begin{example}\label{ex:barral-feng}
    Here, we outline the details from \cite[Example~2.1]{zbl:1228.37025} to give a concrete example of a Gatzouras--Lalley measure whose fine $L^q$-spectrum is non-differentiable at $0$.
    First, set
    \begin{align*}
        A &= \log 3 - \frac{7}{12}\log 2& B &= 3\log 2\\
        U &= (AB)^{1/2}-\frac{B}{2}&V &=\frac{2B + 4(AB)^{1/2}}{4A-B}
    \end{align*}
    We then consider the Gatzouras--Lalley carpet with two homogeneous columns defined by parameters
    \begin{align*}
        \ell_1 &= 1 & \ell_2 &= 150\\
        a_1 &= \exp(-1) & a_2 &= \exp(-(1+V))\\
        b_1 &= b_2 = \left(\frac{\ell_1}{\ell_2}\right)^{V/B}
    \end{align*}
    It is shown in \cite{zbl:1228.37025} that this carpet has two Bernoulli measures of maximal dimension.
    Thus the claim follows by \cref{l:distinct-slopes}.
\end{example}
\begin{figure}[t]
    \begin{center}
        \includegraphics[width = 0.85\textwidth]{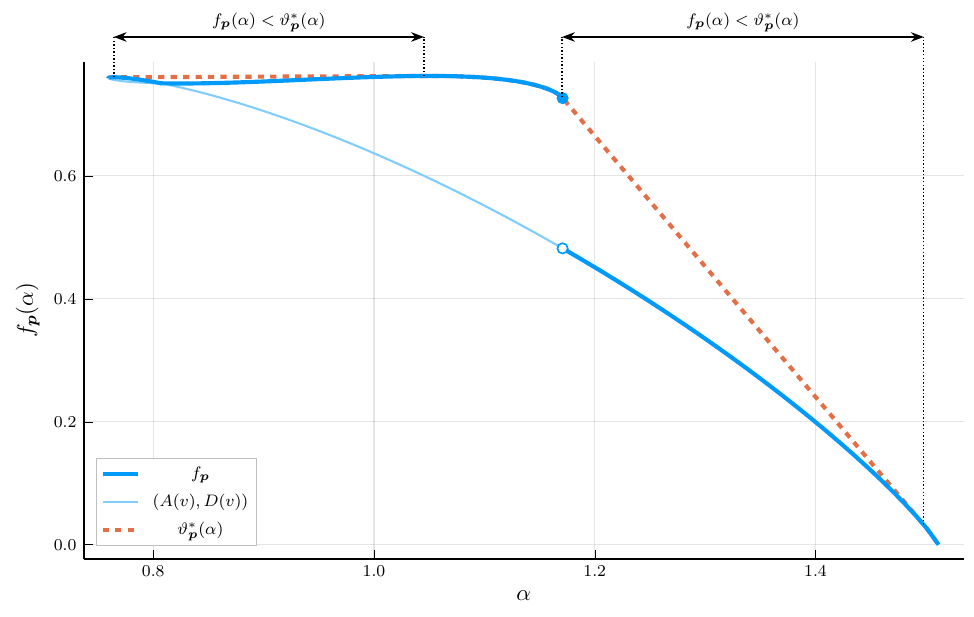}
    \end{center}
    \caption{A plot of the multifractal spectrum of a Gatzouras--Lalley measure with two homogeneous columns.
        The multifractal spectrum is the upper hull of the blue curve, depicted in solid blue.
        The properties are described in \cref{ex:computer}.
    }
    \label{f:exp}
\end{figure}
\begin{example}\label{ex:computer}
    Using a computer search, we were able to find an example with two phase transitions (one for negative $q$ and one for positive $q$), a point where the multifractal spectrum has a jump discontinuity, and a point where the multifractal spectrum is continuous but not differentiable.
    The jump discontinuity corresponds to the endpoint $A_p(0)$ and there is a unique measure of maximal dimension corresponding to $\alpha\approx 1.049925193894$ (between the two regions of non-concavity).
    The exact parameters are:
    \begin{align*}
        \ell_1 &= 1 & \ell_2 &= 12\\
        a_1 &= 0.8484575397963153 & a_2 &=0.03303678167872682\\
        b_1 &= b_2 = 0.03274279864356218\\
        p &= 0.7800520920221643.
    \end{align*}
    See \cref{f:exp} for a depiction of the multifractal spectrum.
\end{example}

\begin{acknowledgements}
    I.K.\ is supported by the Hungarian NRDI Office grants STARTING152587 and K142169 and the János Bolyai Research Scholarship of the Hungarian Academy of Sciences.
    The majority of the work on this project was completed while the third author was a PhD student at the University of St Andrews, where he was supported by EPSRC Grant EP/V520123/1 and the Natural Science and Engineering Research Council (NSERC) via the grant CGS-D/567958-2022.
    A.R.\ also received support from NSERC via the grant PDF/587515-2024 and from the Research Council of Finland via the project \emph{Approximate incidence geometry}, grant no.\ 355453.

    AI tools were used in limited amounts during the final preparation of this paper.
    Most notably, at the direction of the third author, ChatGPT 5.6 Sol was used to write Julia code in order to perform the computer search used to find the example stated in \cref{ex:computer}.
    ChatGPT 5.6 Sol was also used to help with some minor modifications to the TikZ code underlying \cref{f:exp} (the base image was auto-generated by the Julia Plots library).
\end{acknowledgements}

@preprint{arxiv:2312.08974,
  arxiv = {2312.08974},
  author = {Rutar, Alex},
  eprint = {2312.08974},
  eprinttype = {arxiv},
  journal = {Real Anal. Exch.},
  month = {12},
  title = {Multifractal analysis via Lagrange duality},
  year = {2023},
}

@thesis{local:BedfordThesis,
  author = {Bedford, Tim},
  institution = {University of Warwick},
  title = {Crinkly curves, Markov partitions and box dimensions},
  type = {PhD thesis},
  year = {1984},
}

@article{zbl:0757.28011,
  author = {Lalley, Steven P. and Gatzouras, Dimitrios},
  doi = {10.1512/iumj.1992.41.41031},
  eprint = {0757.28011},
  eprinttype = {zbl},
  journal = {Indiana Univ. Math. J.},
  language = {English},
  pages = {533--568},
  title = {Hausdorff and box dimensions of certain self-affine fractals},
  volume = {41},
  year = {1992},
  zbl = {0757.28011},
  zbmath = {00108960},
}

@article{zbl:0955.28004,
  author = {Olsen, Lars},
  doi = {10.2140/pjm.1998.183.143},
  eprint = {0955.28004},
  eprinttype = {zbl},
  journal = {Pac. J. Math.},
  language = {English},
  pages = {143--199},
  title = {Self-affine multifractal {Sierpiński} sponges in {{\(\mathbb{R}^d\)}}},
  volume = {183},
  year = {1998},
  zbl = {0955.28004},
  zbmath = {01180207},
}

@article{zbl:0946.28004,
  addres = {Bristol; London},
  author = {Falconer, Kenneth J.},
  doi = {10.1088/0951-7715/12/4/308},
  eprint = {0946.28004},
  eprinttype = {zbl},
  issue = {4},
  journal = {Nonlinearity},
  language = {English},
  pages = {877--891},
  publisher = {IOP Publishing; London Mathematical Society},
  title = {Generalized dimensions of measures on self-affine sets},
  volume = {12},
  year = {1999},
  zbl = {0946.28004},
  zbmath = {1340053},
}

@book{zbl:1058.90049,
  author = {Boyd, Stephen and Vandenberghe, Lieven},
  eprint = {1058.90049},
  eprinttype = {zbl},
  language = {English},
  pagetotal = {716},
  publisher = {Cambridge University Press},
  title = {Convex optimization},
  year = {2004},
  zbl = {1058.90049},
  zbmath = {2107836},
}

@article{zbl:1091.28005,
  author = {Feng, De-Jun and Wang, Yang},
  doi = {10.1007/s00041-004-4031-4},
  eprint = {1091.28005},
  eprinttype = {zbl},
  journal = {J. Fourier Anal. Appl.},
  language = {English},
  pages = {107--124},
  title = {A class of self-affine sets and self-affine measures},
  volume = {11},
  year = {2005},
  zbl = {1091.28005},
  zbmath = {02171866},
}

@article{zbl:0763.58018,
  author = {Cawley, Robert and Mauldin, R. Daniel},
  doi = {10.1016/0001-8708(92)90064-R},
  eprint = {0763.58018},
  eprinttype = {zbl},
  journal = {Adv. Math.},
  language = {English},
  pages = {196--236},
  title = {Multifractal decompositions of {Moran} fractals},
  volume = {92},
  year = {1992},
  zbl = {0763.58018},
  zbmath = {00036405},
}

@article{zbl:0538.58026,
  author = {Hentschel, H. G. E. and Procaccia, Itamar},
  doi = {10.1016/0167-2789(83)90235-X},
  eprint = {0538.58026},
  eprinttype = {zbl},
  journal = {Physica D},
  language = {English},
  pages = {435--444},
  title = {The infinite number of generalized dimensions of fractals and strange attractors},
  volume = {8},
  year = {1983},
  zbl = {0538.58026},
  zbmath = {03856061},
}

@article{zbl:0539.28003,
  author = {McMullen, Curt},
  doi = {10.1017/S0027763000021085},
  eprint = {0539.28003},
  eprinttype = {zbl},
  journal = {Nagoya Math. J.},
  language = {English},
  pages = {1--9},
  title = {The {Hausdorff} dimension of general {Sierpiński} carpets},
  volume = {96},
  year = {1984},
  zbl = {0539.28003},
  zbmath = {03857419},
}

@article{zbl:0605.58028,
  author = {Ledrappier, F. and Young, L.-S.},
  doi = {10.2307/1971328},
  eprint = {0605.58028},
  eprinttype = {zbl},
  journal = {Ann. Math.},
  language = {English},
  pages = {509--539},
  subtitle = {Part {I}: {Characterization} of measures satisfying {Pesin}'s entropy formula},
  title = {The metric entropy of diffeomorphisms},
  volume = {122},
  year = {1985},
  zbl = {0605.58028},
  zbmath = {03977985},
}

@article{zbl:0664.58022,
  address = {Cambridge},
  author = {Rand, David A.},
  doi = {10.1017/S0143385700005162},
  eprint = {0664.58022},
  eprinttype = {zbl},
  journal = {Ergodic Theory Dyn. Syst.},
  language = {English},
  pages = {527--541},
  publisher = {Cambridge University Press},
  title = {The singularity spectrum {\(f(\alpha)\)} for cookie-cutters},
  volume = {9},
  year = {1989},
  zbl = {0664.58022},
  zbmath = {04086585},
}

@article{zbl:1206.82004,
  author = {Barral, Julien and Mensi, Mounir},
  doi = {10.1017/S0143385706001027},
  eprint = {1206.82004},
  eprinttype = {zbl},
  issue = {5},
  journal = {Ergodic Theory Dyn. Syst.},
  language = {English},
  pages = {1419--1443},
  publisher = {Cambridge University Press, Cambridge},
  title = {Gibbs measures on self-affine Sierpiński carpets and their singularity spectrum},
  volume = {27},
  year = {2007},
  zbl = {1206.82004},
  zbmath = {5209385},
}

@article{zbl:1150.28004,
  arxiv = {0802.0520},
  author = {Barral, Julien and Mensi, Mounir},
  doi = {10.1088/0951-7715/21/10/011},
  eprint = {1150.28004},
  eprinttype = {zbl},
  issue = {10},
  journal = {Nonlinearity},
  language = {English},
  mrnumber = {2439486},
  pages = {2409--2425},
  publisher = {IOP Publishing, Bristol; London Mathematical Society, London},
  title = {Multifractal analysis of {B}irkhoff averages on `self-affine' symbolic spaces},
  volume = {21},
  year = {2008},
  zbl = {1150.28004},
  zbmath = {5355429},
}

@article{zbl:1230.37031,
  author = {Feng, De-Jun and Hu, Huyi},
  doi = {10.1002/cpa.20276},
  eprint = {1230.37031},
  eprinttype = {zbl},
  journal = {Commun. Pure Appl. Math.},
  language = {English},
  pages = {1435--1500},
  title = {Dimension theory of iterated function systems},
  volume = {62},
  year = {2009},
  zbl = {1230.37031},
  zbmath = {05614742},
}

@article{zbl:1184.28009,
  author = {Feng, De-Jun and Lau, Ka-Sing},
  doi = {10.1016/j.matpur.2009.05.009},
  eprint = {1184.28009},
  eprinttype = {zbl},
  journal = {J. Math. Pures Appl.},
  language = {English},
  pages = {407--428},
  title = {Multifractal formalism for self-similar measures with weak separation condition},
  volume = {92},
  year = {2009},
  zbl = {1184.28009},
  zbmath = {05618743},
}

@article{zbl:1206.28012,
  author = {Jordan, Thomas and Rams, Michal},
  doi = {10.1017/S0305004110000472},
  eprint = {1206.28012},
  eprinttype = {zbl},
  journal = {Math. Proc. Camb. Philos. Soc.},
  language = {English},
  pages = {147--156},
  title = {Multifractal analysis for {Bedford}--{McMullen} carpets},
  volume = {150},
  year = {2011},
  zbl = {1206.28012},
  zbmath = {05838021},
}

@article{zbl:1230.37034,
  arxiv = {1008.0301},
  author = {Reeve, Henry W. J.},
  doi = {10.4064/fm212-1-5},
  eprint = {1230.37034},
  eprinttype = {zbl},
  issue = {1},
  journal = {Fundam. Math.},
  language = {English},
  mrnumber = {2771589},
  pages = {71--93},
  publisher = {Polish Academy of Sciences (Polska Akademia Nauk - PAN), Institute of Mathematics (Instytut Matematyczny), Warsaw},
  title = {Multifractal analysis for {B}irkhoff averages on {L}alley--{G}atzouras repellers
},
  volume = {212},
  year = {2011},
  zbl = {1230.37034},
  zbmath = {5860437},
}

@article{zbl:1228.37025,
  author = {Barral, Julien and Feng, De-Jun},
  doi = {10.1088/0951-7715/24/9/010},
  eprint = {1228.37025},
  eprinttype = {zbl},
  journal = {Nonlinearity},
  language = {English},
  pages = {2563--2567},
  title = {Non-uniqueness of ergodic measures with full {Hausdorff} dimension on a {Gatzouras}--{Lalley} carpet},
  volume = {24},
  year = {2011},
  zbl = {1228.37025},
  zbmath = {05956581},
}

@article{zbl:1280.28010,
  arxiv = {1110.6578},
  author = {Barral, Julien and Feng, De-Jun},
  doi = {10.1007/s00220-013-1676-3},
  eprint = {1280.28010},
  eprinttype = {zbl},
  issue = {2},
  journal = {Commun. Math. Phys.},
  language = {English},
  pages = {473--504},
  publisher = {Springer, Berlin/Heidelberg},
  title = {Multifractal formalism for almost all self-affine measures},
  volume = {318},
  year = {2013},
  zbl = {1280.28010},
  zbmath = {6145996},
}

@article{zbl:1371.37012,
  author = {Ledrappier, F. and Young, L.-S.},
  doi = {10.2307/1971329},
  eprint = {1371.37012},
  eprinttype = {zbl},
  journal = {Ann. Math.},
  language = {English},
  pages = {540--574},
  subtitle = {Part {II}: {Relations} between entropy, exponents and dimension},
  title = {The metric entropy of diffeomorphisms},
  volume = {122},
  year = {1985},
  zbl = {1371.37012},
  zbmath = {06781245},
}

@article{zbl:1387.37026,
  author = {Das, Tushar and Simmons, David},
  doi = {10.1007/s00222-017-0725-5},
  eprint = {1387.37026},
  eprinttype = {zbl},
  journal = {Invent. Math.},
  language = {English},
  pages = {85--134},
  title = {The {Hausdorff} and dynamical dimensions of self-affine sponges: a dimension gap result},
  volume = {210},
  year = {2017},
  zbl = {1387.37026},
  zbmath = {06798848},
}

@article{zbl:1379.28008,
  address = {Providence, RI},
  arxiv = {1607.00894},
  author = {Fraser, Jonathan M. and Kempton, Tom},
  doi = {10.1090/proc/13672},
  eprint = {1379.28008},
  eprinttype = {zbl},
  issue = {1},
  journal = {Proc. Am. Math. Soc.},
  language = {English},
  pages = {161--173},
  publisher = {American Mathematical Society},
  title = {On the \(L^q\)-dimensions of measures on Hueter-Lalley type self-affine sets},
  volume = {146},
  year = {2018},
  zbl = {1379.28008},
  zbmath = {6810497},
}

@article{zbl:1407.28002,
  author = {Fraser, Jonathan M. and Yu, Han},
  doi = {10.1512/iumj.2018.67.7509},
  eprint = {1407.28002},
  eprinttype = {zbl},
  journal = {Indiana Univ. Math. J.},
  language = {English},
  pages = {2005--2043},
  title = {Assouad-type spectra for some fractal families},
  volume = {67},
  year = {2018},
  zbl = {1407.28002},
  zbmath = {07019851},
}

@article{zbl:1426.11079,
  author = {Shmerkin, Pablo},
  doi = {10.4007/annals.2019.189.2.1},
  eprint = {1426.11079},
  eprinttype = {zbl},
  journal = {Ann. Math.},
  language = {English},
  pages = {319--391},
  title = {On {Furstenberg}'s intersection conjecture, self-similar measures, and the {{\(L^q\)}} norms of convolutions},
  volume = {189},
  year = {2019},
  zbl = {1426.11079},
  zbmath = {07041748},
}

@article{zbl:0867.28006,
  address = {Cambridge},
  author = {Hueter, Irene and Lalley, Steven P.},
  doi = {10.1017/S0143385700008257},
  eprint = {0867.28006},
  eprinttype = {zbl},
  issue = {1},
  journal = {Ergodic Theory Dyn. Syst.},
  language = {English},
  pages = {77--97},
  publisher = {Cambridge University Press},
  title = {Falconer's formula for the Hausdorff dimension of a self-affine set in \(\mathbb{R}^2\)},
  volume = {15},
  year = {1995},
  zbl = {0867.28006},
  zbmath = {729207},
}

@book{zbl:0819.28004,
  address = {Cambridge},
  author = {Mattila, Pertti},
  eprint = {0819.28004},
  eprinttype = {zbl},
  language = {English},
  publisher = {Univ. Press},
  series = {Camb. Stud. Adv. Math.},
  subtitle = {Fractals and rectifiability},
  title = {Geometry of sets and measures in {Euclidean} spaces},
  volume = {44},
  year = {1995},
  zbl = {0819.28004},
  zbmath = {00739280},
}

@article{zbl:1561.28096,
  author = {Rutar, Alex},
  doi = {10.1017/etds.2022.28},
  eprint = {07682668},
  eprinttype = {zbmath},
  journal = {Ergodic Theory Dyn. Syst.},
  language = {English},
  pages = {2028--2072},
  title = {Geometric and combinatorial properties of self-similar multifractal measures},
  volume = {43},
  year = {2023},
  zbmath = {07682668},
}

@article{zbl:1564.28002,
  address = {University of North Carolina, Chapel Hill, NC},
  arxiv = {1901.01691},
  author = {Feng, De-Jun},
  doi = {10.1215/00127094-2022-0014},
  eprint = {1564.28002},
  eprinttype = {zbl},
  issue = {4},
  journal = {Duke Math. J.},
  language = {English},
  pages = {701--774},
  publisher = {Duke University Press},
  title = {Dimension of invariant measures for affine iterated function systems},
  volume = {172},
  year = {2023},
  zbl = {1564.28002},
  zbmath = {7684350},
}

@article{zbl:1549.37013,
  address = {London},
  arxiv = {2205.01043},
  author = {Kolossváry, István},
  doi = {10.1112/jlms.12767},
  eprint = {1549.37013},
  eprinttype = {zbl},
  journal = {J. Lond. Math. Soc.},
  language = {English},
  pages = {666--701},
  publisher = {London Mathematical Society},
  title = {The \(L^q\) spectrum of self-affine measures on sponges},
  volume = {108},
  year = {2023},
  zbl = {1549.37013},
  zbmath = {07731132},
}

@book{zbl:1543.28001,
  address = {Providence, RI},
  author = {Bárány, Balázs and Simon, Károly and Solomyak, Boris},
  doi = {10.1090/surv/276},
  eprint = {1543.28001},
  eprinttype = {zbl},
  language = {English},
  publisher = {American Mathematical Society},
  series = {Math. Surv. Monogr.},
  title = {Self-similar and self-affine sets and measures},
  volume = {276},
  year = {2023},
  zbl = {1543.28001},
  zbmath = {07759243},
}

@article{zbl:1587.28018,
  arxiv = {2401.07168},
  author = {Banaji, Amlan and Fraser, Jonathan M. and Kolossváry, István and Rutar, Alex},
  doi = {10.1016/j.aim.2025.110707},
  eprint = {08135613},
  eprinttype = {zbmath},
  journal = {Adv. Math.},
  language = {English},
  month = {01},
  pages = {Article ID 110707, 46 p.},
  publisher = {Elsevier (Academic Press), San Diego, CA},
  title = {Assouad spectrum of Gatzouras-Lalley carpets},
  volume = {484},
  year = {2026},
  zbmath = {08135613},
}

@article{zbmath:8179359,
  address = {Berlin},
  arxiv = {2401.13626},
  author = {Batsis, Alex and Käenmäki, Antti and Kempton, Tom},
  doi = {10.1007/s00209-026-03972-2},
  eid = {115},
  eprint = {8179359},
  eprinttype = {zbmath},
  issue = {4},
  journal = {Math. Z.},
  language = {English},
  pagetotal = {25},
  publisher = {Springer},
  title = {Local dimension spectrum for dominated planar self-affine sets},
  volume = {312},
  year = {2026},
  zbmath = {8179359},
}

@article{zbl:0845.28007,
  author = {King, James F.},
  doi = {10.1006/aima.1995.1061},
  eprint = {0845.28007},
  eprinttype = {zbl},
  journal = {Adv. Math.},
  language = {English},
  pages = {1--11},
  title = {The singularity spectrum for general {Sierpiński} carpets},
  volume = {116},
  year = {1995},
  zbl = {0845.28007},
  zbmath = {00838703},
}

@article{zbl:0841.28012,
  author = {Olsen, Lars},
  doi = {10.1006/aima.1995.1066},
  eprint = {0841.28012},
  eprinttype = {zbl},
  journal = {Adv. Math.},
  language = {English},
  pages = {82--196},
  title = {A multifractal formalism},
  volume = {116},
  year = {1995},
  zbl = {0841.28012},
  zbmath = {00838708},
}

@article{zbl:0873.28003,
  author = {Arbeiter, Matthias and Patzschke, Norbert},
  doi = {10.1002/mana.3211810102},
  eprint = {0873.28003},
  eprinttype = {zbl},
  journal = {Math. Nachr.},
  language = {English},
  pages = {5--42},
  title = {Random self-similar multifractals},
  volume = {181},
  year = {1996},
  zbl = {0873.28003},
  zbmath = {00927263},
}

@book{zbl:0869.28003,
  address = {Chichester},
  author = {Falconer, Kenneth},
  eprint = {0869.28003},
  eprinttype = {zbl},
  language = {English},
  publisher = {John Wiley \& Sons},
  title = {Techniques in fractal geometry},
  year = {1997},
  zbl = {0869.28003},
  zbmath = {00981057},
}
\end{document}